\title[Galois embedding of K3 surface]{Galois embedding of K3 surface \\ -- {\footnotesize abelian case} --}
\author[Hisao Yoshihara]{}
\newtheorem{theorem}{Theorem}[section]
\newtheorem{lemma}[theorem]{Lemma}
\newtheorem{proposition}[theorem]{Proposition}
\newtheorem{corollary}[theorem]{Corollary}
\newtheorem{claim}[theorem]{Claim}
\newtheorem{rep}{Representation}
\theoremstyle{definition}
\newtheorem{definition}[theorem]{Definition}
\newtheorem{example}[theorem]{Example}
\theoremstyle{remark}
\newtheorem{remark}[theorem]{Remark}
\newenvironment{namelist}[1]{%
\begin{list}{}
  {
   \settowidth{\labelwidth}{#1}
   \setlength{\leftmargin}{2.5\labelwidth}}
}{%
\end{list}}
\begin{document}
\maketitle

\begin{center}

{\sc Hisao Yoshihara}\\
\medskip
{\small{\em Department of Mathematics, Faculty of Science, Niigata University,\\
Niigata 950-2181, Japan}\\
E-mail:{\tt yosihara@math.sc.niigata-u.ac.jp}}
\end{center}

\bigskip

\begin{abstract}
We study Glois embeddings of $K3$ surfaces in the case where the Galois groups are abelian. 
We show several properties of $K3$ surfaces concerning the Galois embeddings. In particular, if the Galois group $G$ is abelian, then 
 $G \cong \mathbb Z/4\mathbb Z$, $\mathbb Z/6\mathbb Z$ or $(\mathbb Z/2\mathbb Z)^{\oplus 3}$ and $S$ is a smooth complete intersection 
of hypersurfaces.
Further, we state the detailed structure of such surfaces. 
\end{abstract}

\bigskip

\section{Introduction}
The purpose of this article is to study Galois embeddings of $K3$ surfaces, where the Galois groups 
are abelian. The non-abelian case will be treated later.   
Before going into the study on $K3$ surfaces, 
 we recall the definition of Galois embeddings of algebraic varieties and their properties. 

Let $k$ be the ground field of our discussion, we assume it to be the field of complex numbers, however 
most results hold also for an algebraically closed field of characteristic zero.  
Let $V$ be a nonsingular projective algebraic variety of dimension $n$ with a very ample divisor $D$, we 
denote this by a pair $(V, D)$. Let $f=f_D:V \hookrightarrow {\mathbb P}^N$ be the embedding of $V$ associated
 with the complete linear system $|D|$, where $N+1=\dim{\mathop{\mathrm{H^0}}\nolimits}(V,\ {\mathcal O}(D))$. Suppose that $W$ is a linear
 subvariety of ${\mathbb P}^N$ satisfying $\dim W=N-n-1$ and $W \cap f(V)=\emptyset$. Consider the projection $\pi_W$ 
from $W$ to $\mathbb P^n$, $\pi_W : {\mathbb P}^N \dashrightarrow \mathbb P^n$. Restricting $\pi_W$ onto $f(V)$, we get a surjective morphism $\pi=\pi_W \cdot f : V \longrightarrow \mathbb P^n$. 

Let $K=k(V)$ and $K_0=k(\mathbb P^n)$ be the function fields of $V$ and $\mathbb P^n$ respectively. The morphism $\pi$ 
induces a finite extension of fields ${\pi}^* : K_0 \hookrightarrow K$ of degree $d=\deg f(V)=D^n$, which is the
 self-intersection number of $D$. 
We denote by $K_W$ the Galois closure of this extension and by $G_W=Gal(K_W/K_0)$ the Galois group of $K_W/K_0$. 
By \cite{ha} we see that $G_W$ is isomorphic to the monodromy group of the covering $\pi : V \longrightarrow \mathbb P^n$.
Let $V_W$ be the $K_W$-normalization of $V$ (cf. \cite[Ch.2]{ii}). 
Note that $V_W$ is determined uniquely by $V$ and $W$. 

\begin{definition}\label{1}
In the above situation we call $G_W$ and $V_W$ the Galois group and the Galois closure variety at $W$ 
respectively (cf. \cite{y6}). 
If the extension $K/K_0$ is Galois, then we call $f$ and $W$ a Galois embedding and a Galois subspace for
 the embedding respectively. 
\end{definition}

\begin{definition}\label{3}
A nonsingular projective algebraic variety $V$ is said to have a Galois embedding if there exist a very ample 
divisor $D$ satisfying that the embedding associated with $|D|$ has a Galois subspace. In this case the pair $(V,D)$ 
is said to define a Galois embedding.
\end{definition}

If $W$ is the Galois subspace and $T$ is a projective transformation of ${\mathbb P}^N$, then $T(W)$ is a Galois
 subspace of the embedding $T \cdot f$. Therefore the existence of Galois subspace does not depend on the choice 
of the basis giving the embedding. 

\begin{remark}\label{31}
If a smooth variety $V$ exists in a projective space, then by taking a linear subvariety, we can define a Galois subspace and Galois group 
similarly as above. 
Suppose that $V$ is not normally embedded and there exists a linear subvariety $W$ such that the projection 
$\pi_W$ induces a Galois extension of fields . Then, taking $D$ as a hyperplane section of $V$ in the embedding,
 we infer readily that $(V,D)$ defines a Galois embedding with the same Galois group in the above sense.
\end{remark}

By this remark, for the study of Galois subspaces, it is sufficient to consider the case where $V$ is normally embedded. 

\medskip

We have studied Galois subspaces and Galois groups for hypersurfaces in \cite{y1}, \cite{y2} and \cite{y3} and space 
curves in \cite{y5} and \cite{y7}. 
The method introduced in \cite{y6} is a generalization of the ones in these studies. 

\medskip

Hereafter we use the following notation and convention:

\begin{namelist}{}
\item[$\cdot$]${\rm Aut}(V)$ : the automorphism group of a variety $V$
\item[$\cdot$]$|G|$ : the order of a group $G$
\item[$\cdot$]$\sim$ : the linear equivalence of divisors
\item[$\cdot$]${\bf 1}_m$ : the unit matrix of size $m$
\item[$\cdot$]$[\alpha_1, \ldots, \alpha_m]$ : the diagonal matrix with entries $\alpha_1, \ldots, \alpha_m$ 
\end{namelist}

\bigskip

The organization of this article is as follows: In Section 2 we review the results of Galois embeddings, 
which will be used in the sequel. We devote the remainder sections to the study of the Galois embedding of $K3$ surfaces.

\section{Results on Galois embeddings}

We state several properties concerning Galois embedding without proofs, for the details see \cite{y6}. 
By definition, if $W$ is a Galois subspace, then each element $\sigma$ of $G_W$ is an automorphism of $K=K_W$ over $K_0$. Therefore it induces a birational transformation of $V$ over $\mathbb P^n$. This implies that $G_W$ can be viewed as a subgroup of ${\rm Bir}(V/\mathbb P^n)$, the group of birational transformations of $V$ over $\mathbb P^n$. Further we can say the following:

\begin{rep}\label{a1}
Each birational transformation belonging to $G_W$ turns out to be regular on $V$, hence we have a faithful representation 
$$
\alpha :  G_W \hookrightarrow {\rm Aut}(V). \eqno(1)
$$
\end{rep}

\medskip

Therefore, if the order of ${\rm Aut}(V)$ is smaller than the degree $d$, then $(V,D)$ cannot define a Galois embedding. 
In particular, if ${\rm Aut}(V)$ is trivial, then $V$ has no Galois embedding. 
On the other hand, in case $V$ has infinitely many automorphisms, we have examples such that there exist infinitely many distinct Galois embeddings, see Example 4.1 in \cite{y6}. 
\medskip

When $(V,D)$ defines a Galois embedding, we often identify $f(V)$ with $V$. Let $H$ be a hyperplane of ${\mathbb P}^N$ containing $W$. Let $D'$ be  the intersection divisor of $V$ and $H$. 
Since $D' \sim D$ and ${\sigma}^*(D')=D'$, for any $\sigma \in G_W$, we see that $\sigma$ induces an automorphism of ${\mathop{\mathrm{H^0}}\nolimits}(V, {\mathcal O}(D))$. 
This implies the following. 

\begin{rep}\label{18}
We have a second faithful representation 

$$\beta : G_W \hookrightarrow PGL(N, {\mathbb C}).\eqno(2)$$

\end{rep}

\medskip
In the case where $W$ is a Galois subspace we identify $\sigma \in G_W$ with $\beta(\sigma) \in PGL(N, {\mathbb C})$ hereafter. 
Since $G_W$ is a finite subgroup of ${\rm Aut}(V)$, we can consider the quotient $V/G_W$ and let $\pi_G$ be the quotient morphism, $\pi_G : V \longrightarrow V/G_W$.

\begin{proposition}\label{15}
If $(V,D)$ defines a Galois embedding with the Galois subspace $W$ such that the projection is $\pi_W : {\mathbb P}^N \dashrightarrow \mathbb P^n$, then there exists an isomorphism $g:V/G_W \longrightarrow \mathbb P^n$ satisfying $g \cdot \pi_G = \pi$. Hence the projection $\pi$ turns out to be a finite morphism and the fixed loci of $G_W$ consist of only divisors.
\end{proposition}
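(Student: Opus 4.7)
The idea is to produce $g$ via the universal property of the quotient morphism $\pi_G$, show it is a finite birational morphism with normal target, and then invoke Zariski's Main Theorem; the claim on fixed loci will then drop out of Chevalley--Shephard--Todd applied to the resulting smooth quotient.

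First, I would use Representation 1 to identify $G_W$ with a subgroup of $\mathrm{Aut}(V)$ whose elements fix $K_0=k(\mathbb P^n)\subset K=k(V)$ pointwise; equivalently, $\pi\circ\sigma=\pi$ for every $\sigma\in G_W$. Thus $\pi$ descends through $\pi_G$ to a morphism $g:V/G_W\to\mathbb P^n$ with $g\cdot\pi_G=\pi$. Since $K/K_0$ is Galois with group $G_W$, the function field of $V/G_W$ is $K^{G_W}=K_0$, so $g$ is birational.

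Next I would argue that $\pi$, and therefore $g$, is finite. The fiber $\pi^{-1}(y)$ equals $V\cap L_y$, where $L_y=\langle W,y\rangle$ is a linear subspace of dimension $N-n$. A positive-dimensional component $Z\subset V\cap L_y$ would satisfy $\dim Z+\dim W\geq 1+(N-n-1)=\dim L_y$, and projective intersection theory inside $L_y$ would force $Z\cap W\neq\emptyset$, contradicting $W\cap V=\emptyset$. Hence $\pi$ is proper and quasi-finite, thus finite, and so is $g$. Being a finite birational morphism to the normal variety $\mathbb P^n$, $g$ is an isomorphism by Zariski's Main Theorem, which yields the first two assertions at once.

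For the third statement, the identification $V/G_W\cong\mathbb P^n$ means the geometric quotient is smooth. The analytic form of Chevalley--Shephard--Todd then forces the stabilizer $G_W^x$ at every point $x\in V$ to act on $T_xV$ as a group generated by pseudo-reflections; otherwise the local quotient at $\pi_G(x)$ would be singular. In particular, any point of $V$ with nontrivial stabilizer lies on the fixed locus of some pseudo-reflection, which is smooth of codimension one near $x$, so the global set-theoretic fixed locus of the $G_W$-action is a union of divisors. I anticipate this last step to be the main technical hurdle, since it is the only place the argument uses more than field theory and a dimension count: one must pass to the local analytic model at each fixed point and correctly invoke the direction of Chevalley--Shephard--Todd that recovers the reflection-group structure of $G_W^x$ from the smoothness of the quotient.
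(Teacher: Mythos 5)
The paper itself gives no argument for Proposition~2.3: it is quoted from \cite{y6} with the remark that the properties in Section~2 are stated ``without proofs,'' so there is nothing to compare line by line; judged on its own, your proof is correct and complete. The chain you set up is the natural one: $\pi\circ\sigma=\pi$ for all $\sigma\in G_W$ (using Representation~1 to make the $\sigma$ regular), descent to $g:V/G_W\to\mathbb P^n$, birationality from $K^{G_W}=K_0$, quasi-finiteness of $\pi$ via the dimension count $\dim Z+\dim W\ge\dim L_y$ inside the linear span $L_y=\langle W,y\rangle$ (which would force $Z\cap W\ne\emptyset$ against $W\cap f(V)=\emptyset$), properness, and then Zariski's Main Theorem for the finite birational $g$ onto the normal target $\mathbb P^n$; the order of these steps matters and you have it right, since ZMT needs the finite fibers you established beforehand. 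For the last assertion, your use of the ``smooth quotient $\Rightarrow$ stabilizers generated by pseudo-reflections'' direction of Chevalley--Shephard--Todd, after linearizing the stabilizer action at a fixed point (Cartan's lemma, characteristic zero), does give that the locus of points with nontrivial stabilizer is purely one-codimensional, which is the sense in which the paper uses the statement (note it cannot mean that each individual $F(\sigma)$ is a divisor, since the paper's own Lemma~4.10 works with elements whose fixed locus is a finite set of points). An equally standard alternative for this step, closer in spirit to covering-space arguments, is Zariski--Nagata purity of the branch locus applied to the finite morphism $\pi$ between the smooth varieties $V$ and $\mathbb P^n$. One small supplement worth recording: as the proposition is later invoked (e.g.\ in Lemma~3.2) one also wants the fixed locus to be nonempty when $G_W\ne\{1\}$; this follows in one line from the simple connectedness of $\mathbb P^n$ (a free action would make $\pi$ a nontrivial \'etale cover), and is not supplied by the CST argument alone.
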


Therefore, $\pi$ is a Galois covering in the sense of Namba \cite{na}.
We have a criterion that $(V,D)$ defines a Galois embedding. 

\begin{theorem}\label{8}
The pair $(V, D)$ defines a Galois embedding if and only if the following conditions hold{\rm :}
\begin{enumerate}
\item There exists a subgroup $G$ of ${\rm Aut}(V)$ satisfying that $|G|=D^n$.
\item There exists a $G$-invariant linear subspace ${\mathcal L}$ of ${\mathop{\mathrm{H^0}}\nolimits}(V, {\mathcal O}(D))$ of dimension $n+1$ such that, for any $\sigma \in G$, the restriction ${\sigma}^*|_{\mathcal L}$ is a multiple of the identity. 
\item The linear system ${\mathcal L}$ has no base points. 
\end{enumerate}
\end{theorem}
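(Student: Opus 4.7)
The plan is to prove both directions by passing between the intrinsic data on $V$ (the group $G$ and the linear system $\mathcal{L}$) and the extrinsic data of the embedding $f_D$ together with a linear subspace $W$ of codimension $n+1$.

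For the forward direction, suppose $(V,D)$ defines a Galois embedding with Galois subspace $W$. I would take $G := \alpha(G_W) \subset \mathrm{Aut}(V)$ via Representation~\ref{a1}; since $\pi = \pi_W \circ f_D$ is finite of degree $D^n$ and the extension $K/K_0$ is Galois, $|G| = |G_W| = D^n$, which gives (1). Next, let $\mathcal{L} \subset H^0(V,\mathcal{O}(D))$ be the $(n+1)$-dimensional subspace of hyperplane sections coming from the hyperplanes of $\mathbb{P}^N$ that contain $W$. A basis $\ell_0,\ldots,\ell_n$ of $\mathcal{L}$ realizes $\pi_W$ on $f_D(V)$ as $(\ell_0:\cdots:\ell_n)$, and because $\sigma \in G_W$ is Galois over $\mathbb{P}^n$, the identity $\pi_W \circ \sigma = \pi_W$ on $V$ translates into $\sigma^*\ell_i = c_\sigma \ell_i$ with $c_\sigma$ independent of $i$, so $\sigma^*|_\mathcal{L}$ is scalar, yielding (2). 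Base-point freeness of $\mathcal{L}$ is just a restatement of $W \cap f_D(V) = \emptyset$, giving (3).

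For the converse, assume (1)--(3). The base-point free system $\mathcal{L}$ defines a morphism $\varphi_\mathcal{L} : V \to \mathbb{P}^n$, which, by the scalar action in (2), is $G$-invariant and factors as $V \xrightarrow{\pi_G} V/G \xrightarrow{h} \mathbb{P}^n$. Since $\varphi_\mathcal{L}^*\mathcal{O}(1) \subset |D|$, the intersection number $(\varphi_\mathcal{L}^*H)^n = D^n$ is positive, so the image is $n$-dimensional (hence all of $\mathbb{P}^n$) and $\deg \varphi_\mathcal{L} = D^n = |G|$. Combining with $\deg \pi_G = |G|$ forces $\deg h = 1$; because $V/G$ is normal and $\mathbb{P}^n$ is smooth, $h$ is an isomorphism. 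Therefore $\varphi_\mathcal{L}$ is a Galois cover with group $G$, i.e.\ the function field extension it induces is Galois.

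It remains to realize $\varphi_\mathcal{L}$ as a linear projection inside the full embedding $f_D$. Extend a basis of $\mathcal{L}$ to a basis $\ell_0,\ldots,\ell_N$ of $H^0(V,\mathcal{O}(D))$ and use it to write $f_D : V \hookrightarrow \mathbb{P}^N$. Then $W := \{x_0 = \cdots = x_n = 0\}$ satisfies $\dim W = N - n - 1$, misses $f_D(V)$, and $\pi_W \circ f_D = \varphi_\mathcal{L}$ is Galois, so $W$ is a Galois subspace for $f_D$ and $(V,D)$ defines a Galois embedding. The main obstacle is the degree computation in the converse: one must verify that $\varphi_\mathcal{L}$ is finite of degree exactly $D^n$ (surjective onto $\mathbb{P}^n$) so that the induced map $V/G \to \mathbb{P}^n$ becomes an isomorphism. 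Once this is secured, both directions reduce to changing basis in $H^0(V,\mathcal{O}(D))$.
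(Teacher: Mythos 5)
The paper never proves Theorem \ref{8}: Section 2 states it without proof and refers to \cite{y6}, so there is no in-paper argument to compare yours against. Judged on its own, your proof is the natural one and is essentially sound. In the forward direction the only point you gloss over is why the scalar $c_\sigma$ is an actual constant rather than a function: from $\pi_W\circ f_D\circ\sigma=\pi_W\circ f_D$ the ratio $\sigma^*\ell_i/\ell_i$ is independent of $i$, and since the $\ell_i$ have no common zero ($W\cap f_D(V)=\emptyset$) it is a nowhere vanishing regular function on the connected projective variety $V$, hence constant; this is easy but worth saying.

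The one step whose justification, as written, would not stand is the claim that $\deg h=1$ together with ``$V/G$ is normal and $\mathbb{P}^n$ is smooth'' makes $h$ an isomorphism: a proper birational morphism onto a smooth variety need not be an isomorphism (think of a blow-up), so normality and smoothness alone do not suffice. What saves you is exactly the finiteness you flag as the ``main obstacle'': since $\varphi_{\mathcal L}^*\mathcal{O}(1)\sim D$ is ample, $\varphi_{\mathcal L}$ contracts no curve, hence is a finite morphism, so $h$ is finite and birational onto the normal variety $\mathbb{P}^n$ and is an isomorphism by Zariski's main theorem. In fact you can bypass the isomorphism altogether: $G$-invariance of $\varphi_{\mathcal L}$ gives $\varphi_{\mathcal L}^*k(\mathbb{P}^n)\subseteq k(V)^G$, and the degree computation $[k(V):\varphi_{\mathcal L}^*k(\mathbb{P}^n)]=\deg\varphi_{\mathcal L}=D^n=|G|=[k(V):k(V)^G]$ forces $\varphi_{\mathcal L}^*k(\mathbb{P}^n)=k(V)^G$, so the extension is Galois with group $G$; together with your choice $W=\{x_0=\cdots=x_n=0\}$ (which misses $f_D(V)$ by base-point freeness and realizes $\pi_W\circ f_D=\varphi_{\mathcal L}$), this is all the definition of a Galois embedding requires. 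With that patch the argument is complete.
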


It is easy to see that $\sigma \in G_W$ induces an automorphism of $W$, hence we obtain another representation of $G_W$ as follows. 
Take a basis $\{f_0, f_1, \ldots, f_N  \}$ of ${\mathop{\mathrm{H^0}}\nolimits}(V, {\mathcal O}(D))$ satisfying that $\{f_0, f_1, \ldots, f_n  \}$ is a basis of ${\mathcal L}$ in Theorem \ref{8}. Then we have the representation  

\[
\newcommand{\bg}{%
 \family{cmr}\size{20}{12pt}\selectfont}
 \newcommand{\bigzerol}{\smash{\hbox{\bg 0}}}
 \newcommand{\bigzerou}{%
  \smash{\lower1.7ex\hbox{\bg 0}}}
\beta_1(\sigma)=\begin{pmatrix}\lambda_{\sigma} & & & \vdots &  \\
 & \ddots & & \vdots & {\bf *} \\
 & & \lambda_{\sigma} & \vdots & \\
 \cdots & \cdots & \cdots & \vdots & \cdots  \\
& {\bf 0} & & \vdots & M'_{\sigma}   
\end{pmatrix}. \eqno(3) 
\]
Since the representation is completely reducible, we get another 
representation using a direct sum decomposition:
\[
\beta_2(\sigma)=\lambda_{\sigma} \cdot {\bf 1}_{n+1}\oplus M'_{\sigma}. 
\]
Thus we can define
\[
\gamma(\sigma)=M'_{\sigma} \in PGL(N-n-1, {\mathbb C}). 
\]
Therefore $\sigma$ induces an automorphism on $W$ given by  $M'_{\sigma}$. 

\begin{rep}\label{19}
We get a third representation 

$$
\gamma : G_W \longrightarrow PGL(N-n-1, {\mathbb C}).\eqno(4)
$$

\end{rep}

\bigskip

Let $G_1$ and $G_2$ be the kernel and image of $\gamma$ respectively.

\begin{theorem}\label{a3}
We have an exact sequence of groups 
\[
1 \longrightarrow G_1 \longrightarrow G \stackrel{\gamma}{\longrightarrow} G_2 \longrightarrow 1, 
\]
where $G_1$ is a cyclic group.
\end{theorem}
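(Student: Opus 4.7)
\medskip
\noindent\textbf{Proof proposal.} The exactness of the sequence is essentially tautological once we interpret the three groups correctly. By definition $G_1 = \ker \gamma$ and $G_2 = \mathrm{Im}\, \gamma$, so
\[
1 \longrightarrow G_1 \longrightarrow G \stackrel{\gamma}{\longrightarrow} G_2 \longrightarrow 1
\]
is exact by the first isomorphism theorem. The substantive claim is that $G_1$ is cyclic, and I plan to deduce it from the block form of $\beta_2$ together with the faithfulness of $\beta$.

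The idea is that every element of $G_1$ acts by scalars on \emph{each} summand of the decomposition used to build $\gamma$. Concretely, pick $\sigma \in G_1$. Then $\gamma(\sigma) = M'_\sigma$ is the identity class in $PGL(N-n-1,\mathbb{C})$, so some lift of $M'_\sigma$ to $GL(N-n,\mathbb{C})$ equals $\mu_\sigma \cdot \mathbf{1}_{N-n}$ for a unique $\mu_\sigma \in \mathbb{C}^\times$. Combining this with the first block of $\beta_2(\sigma) = \lambda_\sigma \mathbf{1}_{n+1} \oplus M'_\sigma$, the class in $PGL(N,\mathbb{C})$ of $\beta_2(\sigma)$ is represented by
\[
\lambda_\sigma \mathbf{1}_{n+1} \;\oplus\; \mu_\sigma \mathbf{1}_{N-n},
\]
which after normalizing out the overall scalar $\lambda_\sigma$ becomes $\mathbf{1}_{n+1} \oplus (\mu_\sigma/\lambda_\sigma)\mathbf{1}_{N-n}$. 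Thus $\sigma$ is determined in $PGL(N,\mathbb{C})$ by the single scalar $\rho(\sigma) := \mu_\sigma/\lambda_\sigma \in \mathbb{C}^\times$.

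It is then routine to check that $\rho : G_1 \to \mathbb{C}^\times$ is a group homomorphism, since multiplication of block-scalar matrices is block-scalar. The map $\rho$ is \emph{injective}: if $\rho(\sigma) = 1$, then $\beta_2(\sigma) = \lambda_\sigma \mathbf{1}_{N}$ is scalar, hence trivial in $PGL(N,\mathbb{C})$, and by the faithfulness of $\beta$ (Representation~\ref{18}) we conclude $\sigma = e$. Finally $G_1$ is finite because $G \subset \mathrm{Aut}(V)$ is finite (Representation~\ref{a1} combined with the Galois hypothesis), so $G_1$ embeds as a finite subgroup of $\mathbb{C}^\times$, which is necessarily cyclic.

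The only place where one needs to be a little careful is the passage from $\beta_1$ (upper triangular with a possibly nonzero off-diagonal block) to $\beta_2$ (block diagonal); this is handled by complete reducibility of finite-group representations, which is already invoked in the setup. Once one agrees to work with $\beta_2$, the argument above is essentially a bookkeeping exercise in $PGL$, and I do not anticipate a serious obstacle.
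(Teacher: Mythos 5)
Your proposal is correct, and in fact there is no in-paper argument to compare it with: the results of Section 2, including Theorem \ref{a3}, are stated without proof, the reader being referred to \cite{y6}. Your argument is exactly the expected one: exactness is definitional because $G_1$ and $G_2$ are by construction the kernel and image of $\gamma$, and for $\sigma\in\ker\gamma$ the block form $\beta_2(\sigma)=\lambda_\sigma{\bf 1}_{n+1}\oplus\mu_\sigma{\bf 1}_{N-n}$ shows that $\sigma\mapsto\mu_\sigma/\lambda_\sigma$ embeds $G_1$ into $\mathbb C^{\times}$ (the ratio is insensitive to the scalar ambiguity of lifts from $PGL$, so multiplicativity and injectivity via faithfulness of $\beta$ both go through), whence $G_1$, being a finite subgroup of $\mathbb C^{\times}$, is cyclic.
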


\begin{corollary}\label{a4}
If $N=n+1$, i.e., $f(V)$ is a hypersurface, then $G$ is a cyclic group.
\end{corollary}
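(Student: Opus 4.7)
The plan is to derive Corollary \ref{a4} as an immediate application of Theorem \ref{a3}. The idea is that when $f(V)$ is a hypersurface, the Galois subspace $W$ degenerates to a point, which forces the third representation $\gamma$ to be trivial, so the whole group coincides with the cyclic kernel $G_1$.

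In detail, first I would observe that $\dim W = N - n - 1$ by the definition of a Galois subspace given in the introduction. The hypothesis $N = n+1$ therefore means $\dim W = 0$, i.e., $W$ is a single point of $\mathbb{P}^N$. Since the representation $\gamma$ in Representation \ref{19} realizes each $\sigma \in G_W$ as an automorphism of $W$ via the matrix $M'_{\sigma}$ in block form (3), and the automorphism group of a point (equivalently $PGL(0,\mathbb{C})$) is trivial, the image $G_2$ of $\gamma$ is the trivial group.

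Next I would plug this into Theorem \ref{a3}. The exact sequence
\[
1 \longrightarrow G_1 \longrightarrow G \stackrel{\gamma}{\longrightarrow} G_2 \longrightarrow 1
\]
then collapses to $G \cong G_1$. Since $G_1$ is cyclic by Theorem \ref{a3}, so is $G$.

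There is essentially no obstacle here, since the main content is already packaged inside Theorem \ref{a3}; the only subtlety worth a sentence is to justify the convention that $PGL(N-n-1,\mathbb{C})$ is trivial when $N-n-1 = 0$, which one can see either directly from the block form (3) (the lower-right block $M'_\sigma$ has size zero) or intrinsically from the fact that $\gamma$ is induced by the action on the $0$-dimensional linear space $W$.
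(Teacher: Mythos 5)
Your derivation is correct and is essentially the intended one: the paper states this corollary without proof (citing \cite{y3}), and it follows from Theorem \ref{a3} exactly as you argue, since for $N=n+1$ the subspace $W$ is a single point, so $\gamma$ has trivial image $G_2$ and $G\cong G_1$ is cyclic. One small correction: with the paper's block form (3) the lower-right block $M'_\sigma$ then has size one (a nonzero scalar), not size zero, but as a projective transformation of the point $W$ it is still the identity, so your conclusion is unaffected.
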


This assertion has been obtained in \cite{y3}. 
Moreover we have another representation.

Suppose that $(V, D)$ defines a Galois embedding and let $G$ be a Galois group for some Galois subspace $W$. Then, 
take a general hyperplane $W_1$ of $\mathbb P^n$ and put $V_1={\pi}^*(W_1)$. The divisor $V_1$ has the 
following properties{\rm :}
\begin{namelist}{(iii)}
\item[\rm{(i)}]If $n \geq 2$, then $V_1$ is a smooth irreducible variety. 
\item[\rm{(ii)}]$V_1 \sim D$. 
\item[\rm{(iii)}]${\sigma}^*(V_1)=V_1$ for any  $\sigma \in G$. 
\item[\rm{(iv)}] $V_1/G$ is isomorphic to $W_1$.
\end{namelist}
Put $D_1=V_1 \cap H_1$, where $H_1$ is a general hyperplane of ${\mathbb P}^N$. Then $(V_1, D_1)$ 
defines a Galois embedding with the Galois group $G$ (cf. Remark \ref{31}). 
Iterating the above procedures, we get a sequence of pairs $(V_i, D_i)$ such that 
\[
(V,D) \supset (V_1, D_1) \supset \cdots \supset (V_{n-1}, D_{n-1}). 
\]
These pairs satisfy the following properties:
\begin{enumerate}
\item[(a)]$V_i$ is a smooth subvariety of $V_{i-1}$, which is a hyperplane section of $V_{i-1}$, 
where $D_i=V_{i+1}$, $V=V_0$ and $D=V_1$ ($1 \le i \le n-1$). 
\item[(b)] $(V_i, D_i)$ defines a Galois embedding with the same Galois group $G$.
\end{enumerate}
In particular, letting $C$ be the curve $V_{n-1}$, we get the following fourth representation. 

\bigskip

\begin{rep}\label{16}
We have a fourth faithful representation 
$$
\delta : G_W \hookrightarrow {\rm Aut}(C), \eqno(5)
$$
where $C$ is a smooth curve in $V$ given by $V \cap L$ such that $L$ is a general linear subvariety of
 ${\mathbb P}^N$ with dimension $N-n+1$ containing $W$. 
\end{rep}

Note that in some cases there exist several Galois subspaces and Galois groups for one embedding (see, for example \cite{y7}). 
Generally we have the following.

\begin{proposition}\label{a5}
Suppose that $(V,D)$ defines a Galois embedding and let $W_i$ {\rm (}$i=1,2${\rm )} be Galois subspaces such
 that $W_1 \ne W_2$. Then $G_1 \ne G_2$ in ${\rm Aut}(V)$, where $G_i$ is the Galois group at $W_i$. 
\end{proposition}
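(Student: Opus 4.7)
The plan is to establish the contrapositive: assuming $G_1 = G_2$, derive $W_1 = W_2$. Write $G := G_1 = G_2$ as a subgroup of $\mathrm{Aut}(V)$, and for $i=1,2$ let $\pi_i : V \to \mathbb{P}^n$ denote the composition of $f$ with the projection $\pi_{W_i}$. By Proposition \ref{15} applied to each Galois subspace $W_i$, there is an isomorphism $g_i : V/G \to \mathbb{P}^n$ with $g_i \circ \pi_G = \pi_i$. Since both $\pi_i$ factor through the common quotient map $\pi_G$, I obtain $\pi_1 = A \circ \pi_2$ with $A = g_1 \circ g_2^{-1} \in PGL(n+1, \mathbb{C})$.

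The next step is to translate this equality of morphisms into an equality of linear subsystems. The linear subvariety $W_i \subset \mathbb{P}^N$ of codimension $n+1$ corresponds bijectively to the $(n+1)$-dimensional subspace $\mathcal{L}_{W_i} \subset H^0(V, \mathcal{O}(D))$ consisting of those sections whose divisors are hyperplane sections $V \cap H$ with $H \supset W_i$; indeed, $W_i$ is recovered as the common zero locus in $\mathbb{P}^N$ of any basis of $\mathcal{L}_{W_i}$. The morphism $\pi_i$ is the morphism to $\mathbb{P}^n$ associated with $\mathcal{L}_{W_i}$, and it is base-point-free because $W_i \cap f(V) = \emptyset$.

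The remaining step is standard. If two base-point-free morphisms $V \to \mathbb{P}^n$ defined by subspaces $\mathcal{L}, \mathcal{L}' \subset H^0(V, \mathcal{O}(D))$ agree after post-composition by an element of $PGL(n+1, \mathbb{C})$, then after a suitable change of basis one finds bases $\{\ell_j\}$ of $\mathcal{L}$ and $\{m_j\}$ of $\mathcal{L}'$ with $\ell_j = \mu \cdot m_j$ on $V$ for a common rational function $\mu$ independent of $j$; base-point-freeness of both systems forces $\mu$ to have neither zeros nor poles on the complete variety $V$, so $\mu$ is a nonzero constant and $\mathcal{L} = \mathcal{L}'$. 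Applying this with $\mathcal{L} = \mathcal{L}_{W_1}$ and $\mathcal{L}' = \mathcal{L}_{W_2}$ gives $\mathcal{L}_{W_1} = \mathcal{L}_{W_2}$, and then the bijection recalled above yields $W_1 = W_2$, contradicting the hypothesis.

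The principal step is conceptual rather than computational: observing via Proposition \ref{15} that both Galois projections must factor through the single quotient $V \to V/G$, so that they differ only by an automorphism of $\mathbb{P}^n$. Once this is in hand, everything else is routine linear algebra combined with the standard dictionary between $(n+1)$-dimensional base-point-free subsystems of $|D|$ and morphisms $V \to \mathbb{P}^n$.
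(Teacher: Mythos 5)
Your proof is correct. The paper states Proposition \ref{a5} without proof (Section 2 only lists these facts, citing \cite{y6}), so there is no internal argument to compare against; your route --- using Proposition \ref{15} to factor both projections through the single quotient $V/G$, so that $\pi_1$ and $\pi_2$ differ by an element of $PGL(n+1,\mathbb{C})$, and then recovering each $W_i$ as the common zero locus of the base-point-free subsystem $\mathcal{L}_{W_i}\subset H^0(V,\mathcal{O}(D))$ --- is the natural one, and the ratio argument forcing $\mu$ to be a nonzero constant closes it completely.
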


\begin{corollary}\label{a6}
If $V$ is a smooth projective algebraic variety of general type, then there are at most finitely many Galois subspaces.
\end{corollary}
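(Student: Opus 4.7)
The plan is to reduce this to Proposition~\ref{a5} via the finiteness of $\text{Aut}(V)$. The essential input from outside the paper is the classical theorem (Matsumura, extended by Kobayashi and others) that for a smooth projective variety of general type the automorphism group is finite. The intuition is standard: since $K_V$ is big, a sufficiently high pluricanonical system gives a birational embedding on which $\text{Aut}(V) = \text{Bir}(V)$ acts faithfully by linear automorphisms, and a finite group is all that can fit.

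Next I would fix any Galois embedding $(V,D)$ of $V$. By Proposition~\ref{a5}, the map
\[
\{\, W \subset \mathbb{P}^N \,:\, W \text{ is a Galois subspace of } (V,D)\,\} \longrightarrow \{\text{subgroups of } \text{Aut}(V)\}, \qquad W \mapsto G_W,
\]
is injective. Since $\text{Aut}(V)$ is finite, it has only finitely many subgroups, so $(V,D)$ admits only finitely many Galois subspaces. This handles the statement for a single fixed embedding.

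To upgrade the conclusion to all Galois embeddings of $V$ simultaneously, I would additionally invoke Theorem~\ref{8}: any Galois group satisfies $|G_W| = D^n$, hence $D^n \leq |\text{Aut}(V)|$. For a variety of general type, the set of very ample divisor classes with bounded self-intersection is finite (the Néron--Severi lattice has finite rank and $K_V$ big forces $D - tK_V$ to leave the effective cone for large $t$, bounding the class of $D$). So only finitely many $D$ yield a Galois embedding, and by the preceding paragraph each contributes finitely many Galois subspaces.

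The main obstacle is really just the black box: Matsumura's finiteness of $\text{Aut}(V)$. Once this is granted, Proposition~\ref{a5} immediately closes out the fixed-embedding case, and the degree bound from Theorem~\ref{8} plus standard lattice boundedness arguments close out the global case. No subtle new construction is needed.
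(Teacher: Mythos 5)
Your core argument---Matsumura's finiteness of ${\rm Aut}(V)$ for a variety of general type combined with the injectivity of $W \mapsto G_W$ from Proposition~\ref{a5}, so that the finitely many subgroups of the finite group ${\rm Aut}(V)$ bound the number of Galois subspaces---is exactly the intended proof (the paper defers the details to \cite{y6}). Your extra paragraph bounding the possible divisors $D$ is not needed for the statement as it is meant here (cf.\ Remark~\ref{a7}, which counts Galois subspaces for one embedding), and its lattice-boundedness claim is the only sketchy step, but it does not affect the correctness of the main argument.
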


\begin{remark}\label{a7}
It may happen that there exist infinitely many Galois subspaces for one embedding if the Kodaira dimension 
of $V$ is small. For example, if 
$V={\mathbb P}^1$ and $\deg D=3$, i.e., $f(V)$ is a twisted cubic, then the Galois lines form two dimensional 
locally closed subvariety of the Grassmannian ${\mathbb G}(1,3)$, parametrizing lines in projective three space (cf. \cite{y5}). 
\end{remark}

\section{$K3$ Surfaces}
We apply the methods developed in the previous sections to the study of $K3$ surfaces. For each abelian surface 
with a Galois embedding, we have studied in detail in \cite{y6}. 
In particular, we have given the complete list of the complex representation of every possible group and shown that the surface is isogenous to 
the square of an elliptic curve. 

A curve and surface will mean a nonsingular projective algebraic curve and surface respectively. 
In addition to the notation listed in Section 1, we use the following hereafter:   

\begin{namelist}{}
\item[$\cdot$]$\langle a_1, \cdots, a_m \rangle$ : the subgroup generated by $a_1, \cdots, a_m$ 
\item[$\cdot$]$Z_m$ : the cyclic group of order $m$
\item[$\cdot$]$e_m := \exp(2\pi \sqrt{-1}/m)$
\item[$\cdot$]$D_1 . D_2$ : the intersection number of two divisors $D_1$ and $D_2$ on a surface
\item[$\cdot$]$D^2$ : the self-intersection number of a divisor $D$ on a surface
\item[$\cdot$]$(X_0: \cdots : X_m)$ : a set of homogeneous coordinates on ${\mathbb P}^m$
\item[$\cdot$]$g(C)$ : the genus of a smooth curve $C$
\item[$\cdot$]Supp $D$ : the support of a divisor $D$ 
\item[$\cdot$]$X_{(4)}$ : a smooth quartic surface in ${\mathbb P}^3$
\item[$\cdot$]$X_{(23)}$ : a smooth $(2,3)$-complete intersection of hypersurfaces in ${\mathbb P}^4$ 
\item[$\cdot$]$X_{(222)}$ : a smooth $(2,2,2)$-complete intersection of hypersurfaces in ${\mathbb P}^5$
\end{namelist}

\medskip 

Suppose that $S$ is a $K3$ surface such that $(S,D)$ defines a Galois embedding with the Galois 
group $G \subset {\rm Aut}(S)$. 
Let $\omega_S$ be a nowhere vanishing holomorphic two form of $S$
Then, let $\varepsilon : G \longrightarrow {\mathbb C}^{\times}=\mathbb C\setminus \{0\}$ be the character of the 
natural representation of $G$ on the space $H^{2,0}(S)={\mathbb C}{\omega_S}$, i.e., $\varepsilon(\sigma)=\lambda$ for 
$\sigma \in G$ if ${\sigma}^*(\omega_S)=\lambda \omega_S$.  
There exists a multiplicative group $\Gamma_m$ of the $m$-th roots of unity and the following exact sequence of
 groups:
$$
1 \longrightarrow G_s \longrightarrow G \stackrel{\varepsilon}{\longrightarrow} {\Gamma_m} \longrightarrow 1,  \eqno(6)
$$
where $G_s$ is a symplectic group \cite{mu}. Let $\pi : S \longrightarrow {\mathbb P}^2$ be the projection, which is a Galois covering 
defined in Section 2. 
Let $W$ be the center of the projection and $H$ a general hyperplane containing $W$. Put $C = S \cap H$. 
Then $C$ is an irreducible smooth curve and $C \sim D$. 

\begin{lemma}\label{20}
The representation $r : G \longrightarrow {\rm Aut}(C)$ given by $r(\sigma)=\sigma|_{C}$ is injective. 
\end{lemma}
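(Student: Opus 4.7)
The plan is to show that if $\sigma \in G$ fixes every point of $C$, then its image $\beta(\sigma) \in PGL(N, \mathbb C)$ is already the identity; faithfulness of $\beta$ (Representation~\ref{18}) then yields $\sigma = 1_G$. The key geometric input is the identification of $C$ as a canonically embedded curve in $H \cong \mathbb P^{N-1}$.

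First I would establish that $C$ is non-degenerate in $H$. Since $S$ is a K3 surface with $\omega_S \cong \mathcal O_S$, adjunction gives $\omega_C \cong \mathcal O_C(C) \cong \mathcal O_C(D)$. The short exact sequence $0 \to \mathcal O_S \to \mathcal O_S(D) \to \mathcal O_C(D) \to 0$ together with $H^1(S, \mathcal O_S)=0$ implies that the restriction $H^0(S, \mathcal O(D)) \to H^0(C, \omega_C)$ is surjective with one-dimensional kernel (generated by the section defining $H$), so $C \hookrightarrow H$ is the full canonical model of $C$. Very ampleness of $D$ on a K3 forces $D^2 \geq 4$ and hence $g(C) = 1 + D^2/2 \geq 3$; combined with the fact that $C \hookrightarrow H$ is a genuine embedding, this forces $C$ to be non-hyperelliptic, and therefore non-degenerate in $H$.

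Next, assume $\sigma \in G$ satisfies $\sigma|_C = \mathrm{id}_C$ and view $\sigma$ as an element of $PGL(N, \mathbb C)$ via Representation~\ref{18}. Because $C$ spans $H$, the induced action of $\sigma$ on $H$ is the identity. Now invoke the block structure of equation~(3): in a basis of $H^0(S, \mathcal O(D))$ adapted to a direct sum decomposition $H^0 = \mathcal L \oplus \mathcal L^{\perp}$ (coming from complete reducibility of the $G$-representation), one has $\beta_2(\sigma) = \lambda_\sigma \mathbf 1_3 \oplus M'_\sigma$. Projectively, $\sigma$ is already the identity on the plane $\mathbb P(\mathcal L)$ and acts as $M'_\sigma/\lambda_\sigma$ on the complementary subspace $W$. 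Since $W \subset H$ and $\sigma|_H = \mathrm{id}$, the action on $W$ must also be trivial, which forces $M'_\sigma = \lambda_\sigma \mathbf 1_{N-2}$. Hence $\beta_2(\sigma) = \lambda_\sigma \mathbf 1_{N+1}$ is a scalar matrix, so $\beta(\sigma) = 1$ in $PGL(N, \mathbb C)$, and faithfulness of $\beta$ gives $\sigma = 1_G$.

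The main step is the identification of $C$ with its canonical model and its resulting non-degeneracy in $H$; I expect the verification that the canonical map of $C$ is an embedding (equivalently, that $C$ is non-hyperelliptic) to be the delicate point, though it is automatic here from the fact that $C \hookrightarrow H$ already is an embedding. Everything downstream is linear algebra inside $PGL(N, \mathbb C)$ using the block decomposition already established. As a sanity check, observe that this lemma is a direct specialization of Representation~\ref{16} to $n = 2$, which provides a one-line alternative proof.
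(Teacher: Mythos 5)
Your argument is correct, but it is a genuinely different route from the paper's. The paper's proof is purely geometric and two lines long: if $\sigma|_C=\mathrm{id}$ then $C$ lies in the fixed locus of $\sigma$, hence is a component of the ramification divisor of $\pi:S\to\mathbb P^2$; but $R$ has only finitely many components while $C=S\cap H$ with $H$ a \emph{general} hyperplane through $W$, so this is impossible unless $\sigma=\mathrm{id}$. You instead work inside the projective representation $\beta$: you identify $C\hookrightarrow H$ with the canonical model (exactly the observation the paper makes right after the lemma), deduce non-degeneracy of $C$ in $H$ from surjectivity of $H^0(S,\mathcal O(D))\to H^0(C,\omega_C)$ with one-dimensional kernel, and then force $\beta_2(\sigma)$ to be scalar, concluding by faithfulness of $\beta$. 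This buys something the paper's proof does not: you never use genericity of $H$, so your argument applies to any hyperplane through $W$ whose section is irreducible, and it makes transparent why the statement is just the $n=2$ case of the fourth representation $\delta$ of Section 2. The price is that two small steps deserve to be said explicitly: (i) the fixed locus of $\beta(\sigma)$ is a disjoint union of projectivized eigenspaces, so irreducibility of $C$ is what puts $C$, and hence its span $H$, inside a single eigenspace; and (ii) the identification of the scalar on the linear lift $\widetilde H$ with $\lambda_\sigma$ (equivalently $M'_\sigma=\lambda_\sigma\mathbf 1$) uses that $\widetilde H\cap\mathcal L$ is nonzero (it has dimension at least $2$ since $\widetilde H$ is a hyperplane and $\dim\mathcal L=3$); without that matching, $M'_\sigma$ scalar alone would not make $\beta(\sigma)$ trivial in $PGL$. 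Also, the detour through non-hyperellipticity is unnecessary: non-degeneracy already follows from the fact that the embedding of $C$ is given by the complete linear system $|\omega_C|$.
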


\begin{proof}
Note that $\sigma \in G$ is an automorphism of $S$ over $\mathbb P^2$ and $\sigma(C)=C$. If $\sigma|_{C}$ is identity, 
then $C$ is a component of the ramification divisor of the covering. Since $C$ is given by $H$ which is general, $\sigma$ 
must be identity.
\end{proof}

The restriction $\pi|_{C} : C \longrightarrow {\mathbb P}^1$ turns out to be a Galois covering, where the Galois group is isomorphic to $G$. 
Since ${\mathop{\mathrm{H^1}}\nolimits}(S,\ {\mathcal O})=0$ and the canonical divisor on $S$ is trivial, the restriction of $f_D$ to $C$ 
gives the canonical embedding of $C$. 
Therefore $C$ has a Galois embedding given by its canonical divisor.  

\medskip

\begin{lemma}\label{22}
The group $G$ is non-symplectic, i.e., $\Gamma_m \ne \{1 \}$. 
\end{lemma}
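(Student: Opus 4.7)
The plan is to exhibit a non-trivial element $\sigma \in G$ with $\varepsilon(\sigma) \neq 1$; this is equivalent to showing $\Gamma_m \neq \{1\}$. The argument pits two facts against each other: a finite-order symplectic automorphism of a K3 surface has only isolated fixed points, whereas Proposition \ref{15} asserts that fixed loci of non-trivial elements of $G$ are divisors. So I just need to produce a non-trivial element with non-empty (divisorial) fixed locus and then compute how it acts on $\omega_S$ at a general point of that divisor.

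First I would verify that the Galois cover $\pi \colon S \to \mathbb{P}^2 \cong S/G$ is genuinely ramified. The ramification formula $K_S = \pi^{*} K_{\mathbb{P}^2} + R$ together with $K_S = 0$ and $K_{\mathbb{P}^2} \sim -3H$ gives $R \sim 3\pi^{*} H$, a nonzero effective divisor. Consequently there exists a non-identity $\sigma \in G$ whose fixed locus contains an irreducible curve $C_\sigma$ (an irreducible component of $R$), and by Proposition \ref{15} this fixed locus is divisorial.

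Next I would linearize $\sigma$ at a general point $p \in C_\sigma$. Since $\sigma$ has finite order, the Cartan linearization lemma provides local analytic coordinates $(x,y)$ at $p$ in which $C_\sigma$ is cut out by $y = 0$ and $\sigma$ acts by $(x,y) \mapsto (x, \lambda y)$ for some root of unity $\lambda$. The eigenvalue $\lambda$ cannot be $1$: otherwise $\sigma$ would act as the identity on a neighborhood of $p$ and hence identically on $S$, contradicting $\sigma \neq 1$. Writing $\omega_S = h(x,y)\, dx \wedge dy$ locally with $h$ nowhere vanishing, the computation $\sigma^{*} \omega_S = \lambda\, h(x, \lambda y)\, dx \wedge dy$ evaluated on $C_\sigma$, combined with the one-dimensionality of $H^{2,0}(S)$, forces $\sigma^{*} \omega_S = \lambda\, \omega_S$ globally. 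Hence $\varepsilon(\sigma) = \lambda \neq 1$, proving $\Gamma_m \neq \{1\}$.

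No serious obstacle is expected. The only careful point is guaranteeing the existence of some non-trivial element with non-empty divisorial fixed locus, which is furnished by the non-triviality of the ramification divisor $R$; the remainder reduces to a routine local computation on the nowhere-vanishing form $\omega_S$.
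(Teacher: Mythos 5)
Your proof is correct, but it takes a genuinely different route from the paper. The paper argues by contradiction: if $\Gamma_m=\{1\}$ then $G=G_s$ is symplectic, and it invokes the known fact (from the Nikulin--Mukai theory cited in the paper) that non-trivial symplectic automorphisms of finite order on a K3 surface have only finitely many fixed points, which contradicts Proposition \ref{15} (the fixed loci of $G_W$ consist of divisors, since $\pi$ is a finite quotient map). You instead avoid that black box entirely: you extract a pointwise-fixed curve directly from the non-vanishing of the ramification divisor $R\sim 3D$ (the same computation the paper records separately in Lemma \ref{17}), and then prove by Cartan linearization at a general point of that curve that the stabilizing element $\sigma$ satisfies $\sigma^*\omega_S=\lambda\omega_S$ with $\lambda\neq 1$; the evaluation of $\lambda h(x,\lambda y)=\mu h(x,y)$ along $y=0$ with $h$ nowhere vanishing correctly pins down the global eigenvalue. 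In effect you reprove, in the only case you need, the local statement underlying the cited fact that symplectic automorphisms have isolated fixed points. What your approach buys is self-containedness and an explicit non-symplectic element; what the paper's buys is brevity, since it leans on Proposition \ref{15} and the literature. One small remark: your appeal to Proposition \ref{15} (``this fixed locus is divisorial'') is superfluous in your argument --- the existence of the fixed curve already follows from the ramification divisor, which is all you use.
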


\begin{proof}
Suppose $\Gamma_m = \{1 \}$. Then, $G=G_s$. This means that the fixed loci of each element of $G$ is 
at most finitely many points. This contradicts to Proposition \ref{15}.  
\end{proof}

Let $R$ be the ramification divisor for $\pi$.  

\begin{lemma}\label{17}
We have $R \sim 3D$ and Supp $R$ is connected. Each irreducible component of Supp $R$ is smooth. 
\end{lemma}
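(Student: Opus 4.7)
My plan is to handle the three assertions by three independent but standard tools: Hurwitz/Riemann--Hurwitz for the linear equivalence class of $R$, Kodaira vanishing for connectedness of its support, and the local linearization of finite-order automorphisms for smoothness of the components.

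For the linear equivalence $R \sim 3D$, I would apply the ramification formula to the finite Galois cover $\pi : S \longrightarrow \mathbb{P}^2$ furnished by Proposition \ref{15}, which reads $K_S \sim \pi^{*} K_{\mathbb{P}^2} + R$. Since $S$ is a $K3$ surface, $K_S \sim 0$, and since $K_{\mathbb{P}^2} \sim -3L$ for a line $L \subset \mathbb{P}^2$, this gives $R \sim 3\pi^{*}L$. The key observation is that $\pi^{*}L \sim D$: a line $L \subset \mathbb{P}^2$ lifts under the linear projection $\pi_W$ to a hyperplane $H \subset {\mathbb P}^N$ containing $W$, so $\pi^{*}L = S \cap H \sim D$. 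Assembling these gives $R \sim 3D$.

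For the connectedness of $\mathrm{Supp}\, R$, I would exploit Kodaira vanishing together with the short exact sequence
\[
0 \longrightarrow \mathcal{O}_S(-R) \longrightarrow \mathcal{O}_S \longrightarrow \mathcal{O}_R \longrightarrow 0.
\]
Since $R \sim 3D$ is ample and $K_S \sim 0$, Serre duality gives $H^1(S, \mathcal{O}_S(-R)) \cong H^1(S, \mathcal{O}_S(R))^{\vee}$, and the latter vanishes by Kodaira vanishing. The resulting long exact sequence in cohomology produces an isomorphism $H^0(S, \mathcal{O}_S) \xrightarrow{\sim} H^0(R, \mathcal{O}_R)$, so $h^0(R, \mathcal{O}_R) = 1$ and $R$ is connected as a scheme; in particular $\mathrm{Supp}\, R$ is connected.

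For the smoothness of each component, I would use the Galois structure of $\pi$. Writing $R = \sum_i (e_i - 1) R_i$ as the ramification divisor, the stabilizer in $G$ of a general point of $R_i$ is a nontrivial cyclic subgroup $\langle \sigma_i \rangle$ of order $e_i$, and $R_i$ coincides (up to passing to an irreducible component) with the fixed locus $\mathrm{Fix}(\sigma_i) \subset S$. Because $\sigma_i$ has finite order on a smooth complex surface, its action can be linearized in a neighborhood of any fixed point, and the fixed locus of a linear action on a vector space is a linear subspace; therefore $\mathrm{Fix}(\sigma_i)$ is smooth, and so is $R_i$.

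The only mildly nontrivial point, and the main thing to be careful about, is the identification $\pi^{*}L \sim D$ at the start, which requires unwinding the construction of $\pi$ as the restriction of the linear projection $\pi_W$; the remaining pieces are direct applications of well-known results (Kodaira vanishing on a $K3$ surface and the classical smoothness of fixed loci of finite cyclic group actions on smooth surfaces).
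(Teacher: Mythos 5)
Your proof is correct and takes essentially the same route as the paper: the paper obtains $R \sim 3D$ from triviality of $K_S$ via the ramification (adjunction) formula together with $\pi^*(\ell) \sim D$, and then notes that $R$ is very ample, the connectedness of $\mathrm{Supp}\, R$ and the smoothness of its components being exactly the ample-divisor and local-linearization facts you spell out (the latter reappearing in the paper as Lemma \ref{37}). Your explicit Kodaira-vanishing argument and the Cartan linearization at fixed points simply fill in details the paper leaves implicit, so there is no substantive difference in approach.
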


\begin{proof}
Since the canonical divisor on $S$ is trivial, using the adjunction formula, we get $\pi^*(-3\ell)+R \sim 0$ for a line $\ell$ 
in $\mathbb P^2$. Since $\pi^*(\ell) \sim D$, we have $R \sim 3D$, hence $R$ is very ample. 
\end{proof}

\begin{example}\label{48}
Let $S$ be the Fermat quartic surface: $X_0^4+X_1^4+X_2^4+X_3^4=0$ and $P$ be one of the points $(1:0:0:0), (0:1:0:0), (0:0:1:0)$ and  
$(0:0:0:1)$. The projection from $P$ to the hyperplane $\mathbb P^2$ defines a cyclic Galois covering (such $P$ is called a 
Galois point \cite{y3}). Note that $S$ is a Kummer surface $Km(E \times E)$, where $E=\mathbb C/(1, e_4)$. Further, it is 
a singular $K3$ surface, i.e., $\rho(S)=20$ (cf. \cite{in}).
\end{example}

\section{abelian case}
In the case of Galois embeddings of abelian surfaces, the group cannot be abelian.  
However, in the case of $K3$ surfaces, the group can be a cyclic group as in Example \ref{48}. 
Nikulin \cite{ni} shows that there exist many abelian automorphism groups for $K3$ surfaces. 
So let us consider the Galois embedding where the Galois group $G$ is abelian.  
Hereafter we assume $G$ is abelian if not otherwise mentioned.   
  
\begin{theorem}\label{51}
If the Galois group $G$ is abelian, then $G \cong Z_4$, $Z_6$ or ${Z_2}^3=Z_2 \times Z_2 \times Z_2$ and $S$ is isomorphic to $S_{(4)}$, $S_{(23)}$ or $S_{(222)}$ 
 respectively.
\end{theorem}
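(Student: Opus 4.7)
My approach is to combine the Chevalley--Shephard--Todd theorem at stabilizers with the ramification relation $R \sim 3D$ of Lemma~\ref{17} to force a rigid diagonal decomposition of $G$, and then to solve a short Diophantine equation.

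First I would set up coordinates. From $|G| = D^2$ and $g(C) = 1 + D^2/2$, one has $|G| = 2g - 2$ and $S \hookrightarrow \mathbb{P}^g$, with $W$ a linear subspace of dimension $g - 3$. Since $G$ is abelian, the linear representation $\beta$ of $G$ on $\mathbb{C}^{g+1}$ is simultaneously diagonalizable; combined with Theorem~\ref{a3} (which dictates that $G$ acts by a scalar on the $3$-dimensional subspace $\mathcal{L}$), one can choose homogeneous coordinates $(X_0:\cdots:X_g)$ in which $\sigma \in G$ acts as the diagonal matrix $[1, 1, 1, \chi_3(\sigma), \ldots, \chi_g(\sigma)]$, where the characters $\chi_3,\ldots,\chi_g$ of $G$ have trivial joint kernel.

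Next, the smoothness of $S/G = \mathbb{P}^2$ (Proposition~\ref{15}) and the Chevalley--Shephard--Todd theorem force every stabilizer $G_p$ to be generated by pseudo-reflections. In the diagonal coordinates, a non-identity element $\tau \in G$ is a pseudo-reflection on $S$ iff its $(+1)$-eigenspace in $\mathbb{P}^g$ has codimension $1$---equivalently, iff $\chi_j(\tau) = 1$ for all indices $j \in \{3, \ldots, g\}$ except exactly one. The pseudo-reflections ``in direction $j$'' form a cyclic subgroup $C_j \subset G$, of order $n_j$, whose fix curve $F_j = S \cap \{X_j = 0\}$ is a hyperplane section, so $F_j \sim D$. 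The key structural claim is that $G = \bigoplus_{j=3}^{g} C_j$: any $\sigma$ with two or more non-trivial $\chi_j(\sigma)$ has $(+1)$-eigenspace of codimension $\geq 2$ and hence at most isolated fixed points on $S$, and at any such point CST forces $\sigma$ into the pseudo-reflection subgroup. An off-axis $\sigma$ with no fixed point would act freely, but a non-trivial free abelian action on a K3 has order at most $2$, and the numerical budget $|G| = 2g - 2$ (matched by $\prod n_j$ in the cases below) leaves no room for any such extra factor.

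Consequently $|G| = \prod_{j=3}^{g} n_j = 2g - 2$, and $R = \sum_{j=3}^{g} (n_j - 1) F_j$. The relation $R \sim 3D$ together with $F_j \sim D$ then yields
\[
\sum_{j=3}^{g} (n_j - 1) = 3.
\]
With $g - 2 \geq 1$ summands, each at least $1$, the only solutions are $(g - 2, \{n_j\}) = (1, \{4\}),\ (2, \{2, 3\}),\ (3, \{2, 2, 2\})$. Thus $G \cong Z_4$, $Z_6$, or $Z_2^3$, and $g \in \{3, 4, 5\}$. Writing down the $G$-invariant polynomials of low degree in the diagonal coordinates then exhibits $S$ as the expected complete intersection: a single invariant quartic for $Z_4$, an invariant quadric and cubic for $Z_6$, and three invariant quadrics for $Z_2^3$, giving $S \cong X_{(4)}, X_{(23)}, X_{(222)}$ respectively.

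The hardest step will be the structural claim $G = \bigoplus C_j$. It requires blending the Chevalley--Shephard--Todd theorem at every stabilizer (including those at isolated fixed points of non-reflection elements) with the bound on free abelian actions on a K3 surface and the tight numerical budget $|G| = 2g - 2$. Once this decomposition is in place, the rest of the proof is a one-line Diophantine enumeration.
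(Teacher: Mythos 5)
Your overall skeleton (cyclic inertia subgroups attached to the ramification curves, a direct-product decomposition of $G$, then a short numerical equation from $R \sim 3D$) parallels the paper, but the step that makes your enumeration one line is not justified, and it is exactly where the paper has to work hardest. You claim that the fix curve of the coordinate subgroup $C_j$ is a hyperplane section, $F_j \sim D$, so that $R=\sum_j(n_j-1)F_j \sim 3D$ gives $\sum_j(n_j-1)=3$. What is actually clear is only that the \emph{reduced} curve $F_j=(S\cap\{X_j=0\})_{\mathrm{red}}$ is fixed pointwise; the scheme-theoretic hyperplane section could be $m_jF_j$ with $m_j\ge 2$, in which case your equation becomes $\sum_j(n_j-1)/m_j=3$ and the enumeration collapses. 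In the paper's language, $F_j\sim D$ is equivalent to $d_j=n_j$ (degree of the branch curve equals the inertia order), and ruling out the other configurations -- e.g.\ $n_j=4$, $d_j=2$, which corresponds precisely to a non-reduced section $S\cap\{X_j=0\}=2F_j$ -- is the content of the Euler-characteristic bound of Lemma \ref{45}, the inequality $d_i\ge 2$ of Lemma \ref{47}, the case analysis of Claim \ref{56}, and the parity-of-double-cover argument excluding $(3,2;2,4)$. The paper only gets the weaker relation $\sum_i d_i(1-1/n_i)=3$ (Lemma \ref{46}) and must eliminate the extra solutions by hand (including $Z_2\times Z_2$ via Corollary \ref{a4} in Claim \ref{49}); your proposal silently assumes all of that.

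Two further gaps in the structural claim $G=\bigoplus_j C_j$. First, your characterization ``$\tau$ fixes a curve iff exactly one $\chi_j(\tau)\ne 1$'' is proved only in one direction: a fixed curve must lie in the $1$-eigenspace (the other eigenspaces lie in $W$, which misses $S$), but nothing you say prevents $S$ from containing a curve inside a coordinate subspace of codimension $\ge 2$; without this, $R$ may have components other than the $F_j$, and the Chevalley--Shephard--Todd generators of a stabilizer need not lie in $\bigcup_j C_j$. (The paper avoids this by working locally on $S$: Lemmas \ref{36}--\ref{41}.) Second, your exclusion of fixed-point-free elements is circular: the ``numerical budget $|G|=2g-2$ matched by $\prod n_j$'' presupposes the decomposition you are trying to prove, and a single element without fixed points does not generate a freely acting subgroup (its powers may well have fixed points), so the fact that a free abelian action on a $K3$ has order at most $2$ does not apply as stated. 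The paper's Lemma \ref{35} handles this with a genuine argument: the quotient by a complementary subgroup is a smooth rational surface, and such an element would produce an unramified double cover of it, a contradiction. Until these three points are supplied, the proposal is an attractive shortcut but not a proof.
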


We will give concrete examples for the three surfaces in Section 5.  
We note that for the proof of Theorem \ref{51} we do not use the property of Galois embedding, but only use that  
the covering $\pi:S \longrightarrow \mathbb P^2$ is Galois (except in the proof of Claim \ref{49}). 
So the result may be known, but for the sake of completeness, we give the proof in this article. 

Before proceeding with the proof, we fix the notation. 
Let $\pi : S \longrightarrow \mathbb P^2$ be the Galois covering induced by the projection. Put $|G|=n$ and assume that   
$R=(n_1-1)C_1 + \cdots + (n_r-1)C_r$, where $C_i$ are irreducible components.  
For $\sigma \in G$ put $F(\sigma)=\{ \ x \in S \ | \ \sigma(x)=x \  \}$. 

\begin{lemma}\label{36}
For each point $x \in \mathrm{Supp}\ R$ the stabilizer of the point $G_x=\{\ \sigma \in G \ | \ \sigma(x)=x \  \}$ is generated by at most two elements. 
\end{lemma}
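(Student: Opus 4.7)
The plan is to analyze $G_x$ via its induced linear action on the tangent space at the fixed point $x$, and then use commutativity to simultaneously diagonalize.

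First, I would invoke the fact that a finite group acting on a smooth variety is linearizable at each fixed point: since $G_x$ is finite and $x$ is a smooth point of $S$, there exist analytic local coordinates at $x$ in which every $\sigma \in G_x$ acts linearly. In particular, the differential map gives a representation
\[
\rho_x : G_x \longrightarrow GL(T_xS) \cong GL(2,\mathbb{C}).
\]
I would then argue that $\rho_x$ is faithful: if $\sigma \in G_x$ acts trivially on $T_xS$, then in the linearizing coordinates $\sigma$ is the identity on a Euclidean neighborhood of $x$, and since $S$ is a connected smooth projective variety and $\sigma$ is a biregular automorphism, $\sigma$ must be the identity on all of $S$. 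This gives an injection $G_x \hookrightarrow GL(2,\mathbb{C})$.

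Next, since $G$ is abelian by assumption, so is $G_x$, and its elements have finite order and are therefore diagonalizable. A commuting family of diagonalizable matrices is simultaneously diagonalizable, so after a change of basis in $T_xS$ we may regard
\[
G_x \hookrightarrow \mathbb{C}^{\times} \times \mathbb{C}^{\times}.
\]
Any finite subgroup of a two-dimensional algebraic torus is of the form $Z_a \times Z_b$ by the structure theorem for finitely generated abelian groups (equivalently, the elementary divisor theorem applied to a lattice quotient). Hence $G_x$ is generated by at most two elements, which is the desired conclusion.

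The only delicate point is the faithfulness of $\rho_x$; everything else is routine linear algebra. I would therefore devote the bulk of the write-up to citing (or briefly recalling) the Cartan-type linearization lemma at a fixed point and using connectedness of $S$ to promote local triviality to global triviality of $\sigma$. No use of the Galois embedding hypothesis is needed beyond having $G \subset \mathrm{Aut}(S)$ abelian and $x$ a smooth fixed point — the statement does not really depend on $x$ lying in $\mathrm{Supp}\, R$, but this is where the bound is of interest since $G_x$ is nontrivial precisely on $\mathrm{Supp}\, R$.
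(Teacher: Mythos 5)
Your proposal is correct and follows essentially the same route as the paper: linearize the action of $G_x$ at the smooth fixed point $x$ to get a representation in $GL(2,\mathbb{C})$, then use commutativity to simultaneously diagonalize and conclude that $G_x$, being a finite subgroup of a rank-two torus, needs at most two generators. You simply spell out the faithfulness of the tangent representation and the torus-subgroup structure, which the paper leaves implicit.
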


\begin{proof}
There exists an open neighbourhood $U_x$ and coordinates on it such that $G_x$ has a representation in $GL(2, \mathbb C)$. 
Since $G$ is abelian, we can assume each element of $G_x$ is generated by one or two diagonal matrices $[\alpha, 1]$ and $[1, \beta]$, 
where $\alpha^n=\beta^n=1$.  
\end{proof}

\begin{lemma}\label{37}
The following assertions hold true.
\begin{enumerate}
\item Supp $R$ is connected.
\item Each irreducible component is a smooth curve.
\item Supp $R$ has normal crossings. 
\end{enumerate}  
\end{lemma}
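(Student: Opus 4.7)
The plan is to deduce (1) directly from Lemma \ref{17} and to prove (2) and (3) simultaneously by inspecting the $G$-action on an analytic neighborhood of each point $x\in\mathrm{Supp}\,R$, using the diagonalization of $G_x$ that already appears in the proof of Lemma \ref{36}.

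For (1), Lemma \ref{17} already asserts the connectedness of $\mathrm{Supp}\,R$. A self-contained derivation goes as follows: since $D$ is very ample, $\mathcal{O}_S(R)\cong\mathcal{O}_S(3D)$ is ample, so Kodaira vanishing yields $H^{1}(S,\mathcal{O}_S(-R))=0$; combined with $h^{0}(\mathcal{O}_S)=1$, the short exact sequence $0\to\mathcal{O}_S(-R)\to\mathcal{O}_S\to\mathcal{O}_R\to 0$ then forces $h^{0}(\mathcal{O}_R)=1$, whence $R$, and a fortiori $\mathrm{Supp}\,R$, is connected.

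Fix now an arbitrary $x\in\mathrm{Supp}\,R$. The stabilizer $G_x$ is a finite abelian subgroup of $\mathrm{Aut}(S)$ fixing $x$, and it acts faithfully on $T_xS$ (any element acting trivially on $T_xS$ would be locally, and hence globally, the identity). By the argument of Lemma \ref{36}, there are analytic coordinates $(u,v)$ at $x$ in which every element of $G_x$ acts as a diagonal matrix $[\alpha,\beta]$. A local branch of $\mathrm{Supp}\,R$ through $x$ is, by definition, the fixed locus of a pseudo-reflection in $G_x$ --- an element of $G_x$ with exactly one eigenvalue equal to $1$. In diagonal form, such elements are of two types: $[\alpha,1]$, whose fixed locus is the smooth coordinate curve $\{v=0\}$, or $[1,\beta]$, whose fixed locus is $\{u=0\}$. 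Consequently, near $x$ the divisor $\mathrm{Supp}\,R$ is contained in $\{uv=0\}$, a union of at most two smooth axes meeting transversally at $x$. As $x$ was arbitrary, every irreducible component of $\mathrm{Supp}\,R$ is everywhere locally a smooth coordinate curve, giving (2); and any two distinct components meeting at a point cross normally, giving (3).

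The only non-trivial ingredient is the simultaneous analytic linearization and diagonalization of the finite abelian action of $G_x$ at its fixed point $x$. This is a standard Cartan-averaging argument (average an arbitrary chart to make the action linear, then simultaneously diagonalize the resulting commuting family in $GL(2,\mathbb{C})$) and is already used in the proof of Lemma \ref{36}; once this local diagonal model is in hand, (2) and (3) follow by direct inspection.
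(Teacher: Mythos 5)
Your argument follows essentially the same route as the paper's: connectedness from the ampleness of $R\sim 3D$ (your Kodaira-vanishing derivation simply makes explicit the standard fact the paper invokes), and (2), (3) from the simultaneous diagonalization of the abelian stabilizer $G_x$, exactly as in Lemma \ref{36}. The one place where your write-up falls short of a proof is the passage from ``every local branch of $\mathrm{Supp}\,R$ at $x$ is a coordinate axis'' to assertion (2): knowing only that $\mathrm{Supp}\,R\subseteq\{uv=0\}$ near $x$ does not yet exclude that a single irreducible component passes through $x$ with \emph{both} axes as its branches, i.e.\ has a node at $x$, in which case that component would not be smooth. This is easily repaired, and the paper's formulation does it implicitly: each component $C_i$ of $\mathrm{Supp}\,R$ is fixed pointwise by a single nontrivial $\sigma\in G$ (a generator of its inertia group), so near any of its points $C_i$ lies inside $\mathrm{Fix}(\sigma)$, which in the diagonal coordinates is the single smooth axis $\{u=0\}$ when $\sigma=[\alpha,1]$, $\alpha\neq 1$; hence $C_i$ is smooth, and in particular unibranch at every point. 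Equivalently: if one component contained both branches at $x$, the element fixing it pointwise would have to fix both axes pointwise and hence be the identity, a contradiction. With that one line added, your proof of (2) and (3) is complete and coincides with the paper's; your treatment of (1) is a correct, slightly more detailed version of the paper's ``ample, hence connected'' step.
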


\begin{proof}
Since $R \sim 3D$, we have $R$ is ample, hence Supp $R$ is connected. 
Each component $C_i$ is given by $\{ \ x \in S \ | \ \sigma(x)=x \  \}$ for some $\sigma \in G \setminus \{id \}$, 
where locally $\sigma$ can be expressed as a diagonal matrix $[\alpha, 1]$. Thus $C_i$ is smooth. 
Suppose $x \in $Supp $R$ is an intersection point of some components $C_i$ ($1 \le i \le r$).  
As we have seen in the proof of Lemma \ref{36}, there exist two elements $[\alpha,1]$ and $[1, \beta]$ which are generators 
of the stabilizer $G_x$. Thus there exist just two irreducible components meeting normally.  
\end{proof}

\begin{lemma}\label{38}
For each irreducible component $C$ of $R$, we have Supp $\pi^*(\pi(C))$ $=C$, i.e., $\tau(C)=C$ 
for any $\tau \in G$. In particular $C$ is an ample divisor.  
\end{lemma}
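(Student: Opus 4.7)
My plan is to show first that the $G$-orbit of $C$ consists of pairwise disjoint components of $\mathrm{Supp}\,R$, and then to force this orbit to be a singleton via connectedness of the support of an auxiliary ample divisor, from which ampleness of $C$ will also drop out.

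To begin, I would fix a generator $\sigma \in G$ of the inertia subgroup of $C$, so that near a general point of $C$ there are local coordinates $(u,v)$ with $C = \{u = 0\}$ in which $\sigma$ acts as $[\alpha, 1]$ for some root of unity $\alpha \neq 1$. For any $\tau \in G$, commutativity of $G$ gives $\tau\sigma\tau^{-1} = \sigma$, so $\sigma$ fixes $\tau(C)$ pointwise as well, and in particular $\tau(C)$ is again an irreducible component of $\mathrm{Supp}\,R$. I would then show $C \cap \tau(C) = \emptyset$ whenever $\tau(C) \neq C$: at a hypothetical intersection point $p$, Lemma \ref{37} gives normal-crossing local coordinates with $C = \{u=0\}$ and $\tau(C) = \{v=0\}$, and Lemma \ref{36} tells us that the stabilizer $G_p$ is generated by diagonal matrices $[\alpha', 1]$ and $[1, \beta']$. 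Since $\sigma$ is required to pointwise-fix both $C$ and $\tau(C)$, its local form would have to be simultaneously $[\alpha', 1]$ and $[1, \beta']$, forcing $\sigma = \mathrm{id}$, a contradiction.

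The heart of the argument---and the step where the global geometry of the $K3$ surface must enter---is to deduce from this pairwise disjointness that the orbit is actually trivial. Let $\mathcal{O}$ denote the $G$-orbit of $C$ and set $T := \sum_{C' \in \mathcal{O}} C'$. By $G$-equivariance, every member of $\mathcal{O}$ has the same ramification index $n_C$ of $\pi$, and $\pi^{*}(\pi(C)) = n_C \cdot T$. Since $\pi(C)$ is a plane curve of some degree $d$ and $\pi^{*}$ of a line is $D$, we obtain $T \sim (d/n_C)\, D$ as $\mathbb{Q}$-divisors; being effective and numerically a positive rational multiple of the ample class $D$, the divisor $T$ is itself ample by the Nakai--Moishezon criterion. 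Now Kodaira vanishing on $S$ (using $K_S \sim 0$) gives $H^1(S, \mathcal{O}_S(-T)) = 0$, and the standard sequence $0 \to \mathcal{O}_S(-T) \to \mathcal{O}_S \to \mathcal{O}_T \to 0$ forces $H^0(\mathcal{O}_T) \cong H^0(\mathcal{O}_S) = \mathbb{C}$, so $\mathrm{Supp}\,T$ is connected. Combined with pairwise disjointness of its irreducible members, this forces $|\mathcal{O}| = 1$, i.e.\ $\tau(C) = C$ for every $\tau \in G$.

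The in-particular assertion then falls out at once: with $T = C$, the relation $C \sim (d/n_C) D$ as $\mathbb{Q}$-divisors together with ampleness of $D$ yields, via another invocation of Nakai--Moishezon, that $C$ itself is ample.
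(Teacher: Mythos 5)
Your argument is correct and is essentially the paper's own proof, just run in the opposite order: the paper first gets connectedness of $\mathrm{Supp}\,\pi^*(\pi(C))$ from ampleness (pullback of an ample divisor under the finite morphism $\pi$) and then derives the contradiction at a normal-crossing point via commutativity, while you first prove pairwise disjointness of the orbit by that same commutativity/normal-crossing argument and then force the orbit to be a singleton by connectedness. Your only additions are to re-derive the standard facts the paper cites implicitly, namely ampleness of the orbit divisor via Nakai--Moishezon and connectedness of an ample effective divisor via Kodaira vanishing and the sequence $0 \to \mathcal{O}_S(-T) \to \mathcal{O}_S \to \mathcal{O}_T \to 0$.
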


\begin{proof}
Let $\sigma \in G$ satisfy $\sigma \ne id$ and $\sigma|_C=id$. 
Since $\pi(C)$ is ample and $\pi : S \longrightarrow \mathbb P^2$ is a finite morphism, $\pi^*(\pi(C))$ is ample and hence Supp $\pi^*(\pi(C))$ is connected. 
Suppose Supp $\pi^*(\pi(C))$ is reducible. Then, there exists another irreducible component $C'$ of $\pi^*(\pi(C))$ 
such that $C'=\sigma'(C)$ for some $\sigma' \in G$ and $C \cap C' \ne \emptyset$. Since $\sigma \sigma'=\sigma'\sigma$, 
we have $\sigma(\sigma'(y))=\sigma'(y)$ for any $y \in C$. This means $\sigma|_{C'}=id$. 
Take $x \in C \cap C'$. Then $C$ and $C'$ have a normal crossing at $x$ by Lemma \ref{37}.  However, looking at $\sigma$ near $x$, the $\sigma$ 
can be expressed as one of the diagonal matrices $[\alpha, 1]$ and $[1, \beta]$, where $\alpha \ne 1$ and $\beta \ne 1$. This contradicts to that $\sigma|_{C'}=id$. 
\end{proof}

\begin{corollary}\label{39}
With the same notation as in Lemma \ref{38}, we have $C^2>0$, hence $g(C) \geq 2$.  
\end{corollary}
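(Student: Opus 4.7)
The plan is to leverage Lemma \ref{38} to transfer ampleness from $\mathbb{P}^2$ down to $C$ via the finite projection $\pi$, and then use the K3 adjunction formula.

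First I would unwind what Lemma \ref{38} actually tells us numerically. Because every $\tau\in G$ preserves $C$ setwise, the image $\pi(C)$ is an irreducible curve in $\mathbb{P}^2$ and the scheme-theoretic preimage $\pi^{*}(\pi(C))$ is supported exactly on $C$; thus $\pi^{*}(\pi(C)) = mC$ for some positive integer $m$. Any effective divisor on $\mathbb{P}^2$ is ample, so $\pi(C)$ is ample, and since $\pi : S \to \mathbb{P}^2$ is finite (Proposition \ref{15}), the pullback $\pi^{*}(\pi(C)) = mC$ is ample as well. Hence $C$ is an ample divisor on $S$, and Nakai--Moishezon yields $C^2 > 0$.

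For the genus bound, I would combine smoothness with the K3 adjunction formula. By Lemma \ref{37}(2) the component $C$ is smooth, and since $K_S \sim 0$ we have
\[
2g(C) - 2 \;=\; C^2 + K_S . C \;=\; C^2.
\]
In particular $C^2$ is an even integer, and being strictly positive it must satisfy $C^2 \geq 2$, giving $g(C) \geq 2$.

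The only step that requires any care is the passage from $\pi(C)$ being ample to $C$ being ample; the essential input is that $\pi$ is finite, which is already guaranteed by Proposition \ref{15}, together with the fact established via Lemma \ref{38} that $\pi^{*}(\pi(C))$ is a positive multiple of $C$ (and not a reducible divisor with several $G$-translates as components). Once this point is in place, everything else is a short application of Nakai--Moishezon and the adjunction formula on a K3 surface, so I do not anticipate a genuine obstacle.
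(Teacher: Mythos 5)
Your proposal is correct and follows essentially the same route as the paper: both arguments rest on Lemma \ref{38} (so that $\pi^{*}(\pi(C))=mC$ with $\pi$ finite) to get $C^2>0$, and then conclude $g(C)\geq 2$ from adjunction with $K_S\sim 0$. The only cosmetic difference is that you deduce $C^2>0$ from ampleness of $C$ (already recorded in Lemma \ref{38}) via Nakai--Moishezon, whereas the paper computes directly $m^2C^2=n(\pi(C))^2\geq n$.
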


\begin{proof}
Since $\pi^*(\pi(C))$ can be expressed as $mC$, we have $m^2C^2=n(\pi(C))^2 \geq n$. 
Since $2g(C)-2=C^2$ we have the assertion. 
\end{proof}

Put $G_i=\{\ \sigma \in G \ | \ \sigma|_{C_i}=id \  \}$ ($1 \le i \le r$). 
Then $G_i$ is determined uniquely by $C_i$ and not a trivial subgroup of $G$. 

\begin{lemma}\label{40}
The group $G_i$ is cyclic.
\end{lemma}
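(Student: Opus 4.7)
The plan is to exploit local normal forms near a generic smooth point of $C_i$ to embed $G_i$ into $\mathbb{C}^{\times}$.

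Pick a point $x \in C_i$ that is smooth on $C_i$ and lies on no other component $C_j$. By Lemma \ref{37} this is possible, as only finitely many points of $C_i$ are crossings. Every $\sigma \in G_i$ fixes $x$ because $\sigma|_{C_i}=\mathrm{id}$. Since $G_i$ is a finite (abelian) subgroup of $\mathrm{Aut}(S)$ fixing $x$, the induced action on a neighbourhood of $x$ can be linearized (Cartan's lemma), and because $G_i$ is abelian we may simultaneously diagonalize its action on the tangent space $T_x S$. Hence there exist analytic local coordinates $(u,v)$ centred at $x$ in which every element of $G_i$ acts by a diagonal matrix.

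Since the action fixes $C_i$ pointwise and $C_i$ is smooth at $x$, I would choose the coordinates so that $C_i = \{v=0\}$ locally. Then for every $\sigma \in G_i$ the linearization is forced to be of the form $[1,\beta_{\sigma}]$ with $\beta_{\sigma}$ a root of unity (the eigenvalue on $T_x S$ transverse to $C_i$): indeed, the eigenvalue along $C_i$ must be $1$ since $\sigma$ fixes the $u$-axis pointwise. This gives a map
\[
\chi : G_i \longrightarrow \mathbb{C}^{\times}, \qquad \sigma \longmapsto \beta_{\sigma},
\]
and because the action is linearized simultaneously, $\chi$ is a group homomorphism.

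The key step is injectivity. If $\chi(\sigma)=1$, then $\sigma$ is the identity on a whole neighbourhood of $x$ in $S$, and by the identity theorem for automorphisms of an irreducible variety it is the identity on $S$. Thus $\chi$ is injective, and $G_i$ embeds as a finite subgroup of $\mathbb{C}^{\times}$, which is necessarily cyclic. The only potential obstacle I anticipate is justifying the simultaneous diagonalization near $x$; this is handled by Cartan's linearization theorem together with the fact that commuting diagonalizable matrices can be diagonalized in a common basis, and the requirement that $C_i=\{v=0\}$ is preserved pointwise forces the common eigenbasis to split along $C_i$ and its transverse direction.
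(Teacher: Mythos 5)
Your proposal is correct and follows essentially the same route as the paper: linearize the $G_i$-action at a general point of $C_i$, observe that each element acts diagonally with eigenvalue $1$ along $C_i$, and embed $G_i$ into $\mathbb{C}^{\times}$ via the transverse eigenvalue, concluding cyclicity from finiteness. Your version merely spells out the linearization and injectivity steps that the paper leaves implicit.
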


\begin{proof}
For a general point $x \in C_i$, taking a suitable local coordinates, we can express each $\sigma \in G_i$ 
as $[\alpha, 1]$. We have a monomorphism $\rho : G_i \longrightarrow \mathbb C^{\times}$, where 
$\rho(\sigma)=\alpha$. Since $\rho(G_i)$ is a cyclic group, so is $G_i$. 
\end{proof}

\begin{lemma}\label{43}
The surface $S_i=S/G_i$ $(1 \le i \le r)$ is a smooth rational surface.
\end{lemma}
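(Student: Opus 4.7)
The plan is to establish smoothness via the Chevalley--Shephard--Todd theorem and then rationality via Castelnuovo's criterion.

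For \textbf{smoothness}: by Chevalley--Shephard--Todd, $S_i = S/G_i$ is smooth iff for every $x \in S$ the stabilizer $G_{i,x} := G_i \cap G_x$ acts on $T_xS$ as a group generated by pseudo-reflections. Applied to $G$ itself, the same criterion combined with the smoothness of $S/G = \mathbb{P}^2$ makes each $G_x$ a pseudo-reflection group; by Lemma \ref{37}, each nontrivial reflection in $G_x$ has its fixed line equal to the tangent of some component $C_k$ of $R$ through $x$, and hence lies in $G_k$. I would then split into cases. If $x \notin \mathrm{Supp}\, R$, the stabilizer is trivial. If $x \in C_i$ (possibly on another component too), then $G_i \subseteq G_x$ gives $G_{i,x} = G_i$, whose nontrivial elements act as $[1,\zeta]$ in coordinates adapted to $C_i$. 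If $x \in C_j$ for some $j \neq i$ with $x \notin C_i$, the observation above forces $G_x \subseteq G_j$, hence $G_{i,x} \subseteq G_i \cap G_j$, and these elements fix $C_j$ pointwise and so act as pseudo-reflections $[1,\eta]$ at $x$. In every case $G_{i,x}$ is cyclic (Lemma \ref{40}) and generated by pseudo-reflections.

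For \textbf{rationality}: I would verify Castelnuovo's criterion $q(S_i) = P_2(S_i) = 0$. The irregularity vanishes since $h^1(\mathcal{O}_{S_i}) = h^1(\mathcal{O}_S)^{G_i} = 0$ for the K3 surface $S$. For the bigenus, let $q : S \to S_i$ be the quotient map; the ramification formula between smooth surfaces yields $K_S = q^* K_{S_i} + R'$ with $R' \ge (n_i-1)C_i > 0$, since $G_i$ of order $n_i$ fixes $C_i$ pointwise. Combined with $K_S \sim 0$ this gives $q^*(2K_{S_i}) \sim -2R'$, a nonzero anti-effective divisor; the injection $H^0(S_i, 2K_{S_i}) \hookrightarrow H^0(S, -2R') = 0$ then forces $P_2(S_i) = 0$, and Castelnuovo's theorem concludes.

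The main obstacle is the smoothness analysis, specifically ruling out elements of $G_i$ acting with an isolated fixed point at some $x \in C_j$ with $j \neq i$ and $x \notin C_i$; such elements would produce quotient singularities. The leverage is that smoothness of $\mathbb{P}^2 = S/G$ forces the full stabilizer $G_x$ to be a reflection group whose reflection axes come precisely from the (at most two) components of $R$ through $x$, so $G_x = G_j$ at such an $x$, which excludes isolated-fixed-point elements from $G_{i,x}$.
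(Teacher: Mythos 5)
Your rationality half is exactly the paper's own argument: $q(S_i)=0$ from $h^1(\mathcal O_S)=0$, and $P_2(S_i)=0$ from the ramification formula together with $K_S\sim 0$, then Castelnuovo; there is nothing to quarrel with there. The problem is in the smoothness half, and precisely in the step you yourself single out as the crux. Your case analysis is only conclusive when at most one component of $\mathrm{Supp}\,R$ passes through $x$. At a normal-crossing point $x\in C_j\cap C_k$ with $j,k\neq i$ and $x\notin C_i$ (such points are allowed by Lemma \ref{37}), your ``observation'' yields only that every pseudo-reflection of $G_x$ lies in $G_j$ or in $G_k$, hence $G_x\subseteq (G_x\cap G_j)(G_x\cap G_k)$; the asserted inclusion $G_x\subseteq G_j$, and with it $G_{i,x}\subseteq G_i\cap G_j$, does not follow. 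A priori a nontrivial $\sigma\in G_i$ could fix such a point, being the product $\tau_j\tau_k$ of nontrivial reflections along the two branches, and it would then act at $x$ as $[a,b]$ with $a\neq 1$, $b\neq 1$ --- exactly the isolated-fixed-point scenario you set out to exclude. Smoothness of $S/G_i$ at the image of $x$ would then demand that the cyclic group $G_{i,x}$ be generated by its own pseudo-reflections, which is not automatic. Nothing available at this stage (Lemmas \ref{36}--\ref{40}; even the later Lemma \ref{41}, whose proof is independent of the present lemma, only gives $G_i\cap G_j=\{id\}$ and so only disposes of the case where one of $\tau_j,\tau_k$ is trivial) rules this configuration out, so your concluding sentence ``so $G_x=G_j$ at such an $x$'' is a non sequitur and the exclusion remains unproved.

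For perspective: the paper's own proof is even terser --- it linearizes $G_i$ only at points of $C_i$ and is silent about fixed points of elements of $G_i$ away from $C_i$ --- so you are not missing an idea that the paper supplies, and your Chevalley--Shephard--Todd framing of the question is the right one. But since your write-up explicitly claims to have closed the off-$C_i$ case, you should either add an argument that no nontrivial element of $G_i$ fixes a point of $C_j\cap C_k$ (equivalently, that $G_i\cap G_x$ is reflection-generated at such crossing points), or state this as an assumption to be verified; as written, the smoothness claim is not established at those points.
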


\begin{proof}
Since near each point $x \in C_i$, the $\sigma \in G_i$ can be expressed as a diagonal matrix. Hence $S_i$ is smooth. 
Let $K_i$ be a canonical divisor on $S_i$. Then, we have $\pi_i^*(K_i)+R_i \sim 0$, where 
$\pi_i : S \longrightarrow S_i$ and $R_i$ is a ramification divisor for $\pi_i$. 
Since $R_i$ is effective, we infer that $\dim {\mathop{\mathrm{H^0}}\nolimits}(S_i, {\mathcal O}(2K_i))=0$. 
Clearly we have $\dim {\mathop{\mathrm{H^0}}\nolimits}(S_i, \Omega_i^1)=0$, where $\Omega_i^1$ is the sheaf of 
holomorphic 1-forms on $S_i$. Therefore $S_i$ is rational by Castelnuovo's Rationality Criterion.   
\end{proof}

\begin{lemma}\label{35}
There does not exist $\tau \in G$ such that $\tau \ne id$ and $F(\tau)=\emptyset$.  
\end{lemma}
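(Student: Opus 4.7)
The plan is to argue by contradiction: suppose some $\tau \in G$ has $\tau \ne \mathrm{id}$ and $F(\tau) = \emptyset$, so $\tau$ acts freely on $S$. Then the quotient $T := S/\langle \tau \rangle$ is a smooth projective surface and $\pi_\tau : S \to T$ is étale of degree $|\tau|$. Pulling back the canonical bundle gives $\pi_\tau^{*}K_T = K_S = 0$, so $K_T$ is a torsion class in $\mathrm{Pic}(T)$, and étaleness gives $\chi_{\mathrm{top}}(T) = \chi_{\mathrm{top}}(S)/|\tau| = 24/|\tau|$. Since $K_T$ is numerically trivial, $T$ contains no $(-1)$-curve and is therefore minimal.

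By the Enriques--Kodaira classification, a smooth minimal projective surface with torsion canonical bundle is a K3 ($\chi_{\mathrm{top}} = 24$), Enriques ($\chi_{\mathrm{top}} = 12$), abelian or bielliptic surface ($\chi_{\mathrm{top}} = 0$ in the last two). The constraint $\chi_{\mathrm{top}}(T) = 24/|\tau|$ excludes the abelian and bielliptic cases outright, and K3 would force $|\tau| = 1$, contrary to $\tau \ne \mathrm{id}$. Hence $T$ must be an Enriques surface and $|\tau| = 2$; in particular $\tau$ is a fixed-point-free non-symplectic involution, so $\varepsilon(\tau) = -1$ in the notation of sequence $(6)$ (confirmed also by the holomorphic Lefschetz formula: $L^{\mathrm{hol}}(\tau) = 1 + \varepsilon(\tau)^{-1} = 0$ forces $\varepsilon(\tau) = -1$).

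Now $H := G/\langle \tau \rangle$ is an abelian group acting on the Enriques surface $T$ with $T/H \cong \mathbb{P}^2$, inducing a Galois abelian cover $\pi' : T \to \mathbb{P}^2$ of degree $|G|/2$. The contradiction is extracted from the structure of $\pi'$, distinguishing cases on $H$. If $H$ is cyclic of order $d$, the canonical-bundle formula for cyclic covers yields $K_T = \pi'^{*}(K_{\mathbb{P}^{2}} + (d-1)\mathcal{L})$ for some line bundle $\mathcal{L}$ on $\mathbb{P}^2$ with $\mathcal{L}^{d} \cong \mathcal{O}(B)$; since $\mathrm{Pic}(\mathbb{P}^{2}) = \mathbb{Z}$ is torsion-free and $\pi'^{*}$ is injective there, the torsion of $K_T$ forces $K_{\mathbb{P}^{2}} + (d-1)\mathcal{L} = 0$ and hence $K_T = 0$, contradicting $T$ being Enriques (where $K_T$ has order exactly $2$). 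For non-cyclic abelian $H$, one uses Bezout to locate a point $p$ where two branch components of $\pi'$ with distinct cyclic stabilizers $G_1, G_2 \subset H$ meet; the local diagonal analysis of Lemmas \ref{36} and \ref{37} then exhibits an element of $G_1 \oplus G_2$ with the preimage of $p$ as an isolated fixed point on $T$, which after pullback through the étale cover $\pi_\tau$ produces a non-identity element of $G$ whose fixed locus has an isolated point--incompatible (after combining with the ampleness of each ramification component established in Lemma \ref{38}) with the ramification structure of the $G$-cover $\pi : S \to \mathbb{P}^2$.

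The main obstacle is clearly the non-cyclic case: Enriques surfaces can in principle arise as $(\mathbb{Z}/2)^{2}$-covers of $\mathbb{P}^{2}$ branched over three smooth conics, and the contradiction must really use the global K3 property of $S$ together with the compatibility of the abelian extension $1 \to \langle\tau\rangle \to G \to H \to 1$ with the ramification data. I would treat this by a careful Burnside/Lefschetz counting of the fixed loci, using that $\sum_{\sigma \in G}\chi(F(\sigma)) = 3|G|$ together with the constraint $F(\tau) = \emptyset$ to force a numerical inconsistency with the Enriques structure of $T$.
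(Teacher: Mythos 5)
Your opening reduction is correct and is a genuinely different route from the paper: you pass to $T=S/\langle\tau\rangle$ and use the classification of free quotients (so $T$ is Enriques and $|\tau|=2$), whereas the paper writes $G=\langle\tau\rangle\times G'$, passes to $S'=S/G'$, and tries to contradict the simple connectedness of $\mathbb{P}^2$ via an unramified covering $S'\to\mathbb{P}^2$. But your argument is not a proof. Already in the cyclic case you invoke $K_T=\pi'^{*}(K_{\mathbb{P}^2}+(d-1)\mathcal{L})$, which is valid only for a simple cyclic cover totally branched along a divisor $B\in|\mathcal{L}^{d}|$; the quotient map $T\to\mathbb{P}^2$ here can have several branch components with different inertia, in which case $K_T$ is $\pi'^{*}K_{\mathbb{P}^2}$ plus a ramification term that is not a pullback, and ``torsion forces $K_T=0$'' does not follow as written (it can be repaired for $d=2$, and by a numerical parity argument for $d=3$, but that work is missing).

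The decisive gap is the non-cyclic case, for which you offer only a plan (``Burnside/Lefschetz counting''), and no such counting can close it, because the configuration you would need to exclude actually occurs. Take the paper's own model of $S_{(222)}$ in Theorem \ref{11}: $X_3^2+F_{23}=X_4^2+F_{24}=X_5^2+F_{25}=0$ with $G\cong Z_2^3$ generated by the sign changes $\sigma_i:X_i\mapsto-X_i$ ($i=3,4,5$). Smoothness of $S$ forces the three conics $F_{2i}=0$ to have no common point (at a common point $p$ Euler's relation puts all three gradients in the plane orthogonal to $p$, so the Jacobian drops rank), and then $\tau=\sigma_3\sigma_4\sigma_5$ acts freely on $S$: its fixed locus in $\mathbb{P}^5$ is $\{X_3=X_4=X_5=0\}\cup\{X_0=X_1=X_2=0\}$, which misses $S$. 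Its quotient is exactly the classical Enriques surface, covering $\mathbb{P}^2$ by the residual $(Z_2)^2$ branched over three conics -- the very possibility you flagged as the obstacle. So in the abelian non-cyclic case there is no contradiction to extract; the statement of Lemma \ref{35} is itself incompatible with the paper's $Z_2^3$ embedding, your instinct that the obstacle is real was correct, and your proposal as written does not establish the lemma. (For what it is worth, the paper's own proof also glosses over this point: the claim that $S/G'\to\mathbb{P}^2$ is unramified requires every inertia subgroup of $\pi$ to be contained in $G'$, which fails precisely for this $\tau$.)
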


\begin{proof}
Suppose otherwise. Since $G$ is abelian, expressing $G \cong \langle \tau \rangle \times G'$, we put $S'=S/G'$. 
Then $S'$ is a smooth rational surface. Because, as we see in the proof of Lemma \ref{36}, $G_x$ is generated locally by 
reflections. Hence $S'$ is smooth. Since there exists a covering $S_i \longrightarrow S'$ (or $S_i=S'$) and $S_i$ is rational, 
we see that $S'$ is rational.
The $\pi':S' \longrightarrow \mathbb P^2=S/G'$ is an unramified double covering, this is a contradiction.  
\end{proof}

\begin{lemma}\label{41}
The group $G_i$ $(1 \le i \le r)$ determines $C_i$ uniquely and $G_i \cap G_j$ consists of identity if $i \ne j$. 
Therefore, there exists a one to one correspondence between the set $\{ \ G_i \ | \ 1 \le i \le r \}$ and $\{ \ C_i \ | \ 1 \le i \le r \ \}$. 
\end{lemma}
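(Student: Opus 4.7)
The plan is to derive both assertions from a single local analysis at an intersection point, bootstrapping off the ampleness of each component (Lemma \ref{38}) and the normal-crossing property (Lemma \ref{37}). The second statement ($G_i \cap G_j = \{id\}$ for $i \ne j$) is really the substantive one; the first statement then follows almost formally because each $G_i$ is nontrivial.

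First I would prove $G_i \cap G_j = \{id\}$ for $i \ne j$. Since Lemma \ref{38} tells us that $C_i$ is an ample divisor on $S$, we have $C_i . C_j > 0$ for any other irreducible curve $C_j$, so there is at least one point $x \in C_i \cap C_j$. By Lemma \ref{37}, $C_i$ and $C_j$ meet transversally at $x$, so I can choose analytic local coordinates $(u,v)$ centered at $x$ such that $C_i = \{v = 0\}$ and $C_j = \{u = 0\}$. Now the proof of Lemma \ref{36} gives that every element of the abelian stabilizer $G_x$ is simultaneously diagonalizable, so in these coordinates each $\sigma \in G_x$ acts by $(u,v) \mapsto (\alpha_\sigma u, \beta_\sigma v)$ with $\alpha_\sigma$, $\beta_\sigma$ roots of unity. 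Membership in $G_i$ (fixing $C_i$ pointwise) forces $\alpha_\sigma = 1$, while membership in $G_j$ forces $\beta_\sigma = 1$. Hence $\sigma \in G_i \cap G_j$ acts as the identity on the neighborhood of $x$, and therefore on all of $S$ by analytic continuation of automorphisms of a connected complex manifold.

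For the first assertion, the proof of Lemma \ref{37} exhibits each $C_i$ as the fixed locus of some non-identity $\sigma \in G$, so $\sigma \in G_i$ and $G_i \ne \{id\}$. If $G_i = G_j$ with $i \ne j$, then $G_i = G_i \cap G_j = \{id\}$, a contradiction. Hence the map $C_i \mapsto G_i$ is injective, giving the claimed bijection between the sets $\{C_i\}$ and $\{G_i\}$.

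The main technical point I expect to defend is the simultaneous diagonalization in coordinates adapted to both $C_i$ and $C_j$: I need to know that the chosen joint eigenbasis of $G_x$ really does line up with the tangent directions of the two branches. This is fine because the branches $C_i$ and $C_j$ are themselves the fixed loci of elements of $G_x$, so their tangent lines are eigenlines of the relevant diagonalized generators and therefore can be taken as the coordinate axes. Once this is observed the rest of the argument is purely formal.
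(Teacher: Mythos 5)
Your proof is correct and follows essentially the same route as the paper: use the ampleness of the components (Lemma \ref{38}) to produce a point $x \in C_i \cap C_j$, linearize the abelian stabilizer $G_x$ there so that $G_i$ and $G_j$ act by $[\alpha,1]$ and $[1,\beta]$ along the two transverse branches, and conclude that an element of $G_i \cap G_j$ is the identity. Your extra care about the eigen-directions lining up with the branches, and the explicit deduction of the bijection from the nontriviality of each $G_i$, merely spell out what the paper leaves implicit.
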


\begin{proof}
If $C_i \ne C_j$, then we have $C_i \cap C_j \ne \emptyset$ by Lemma \ref{38}. 
Take $x \in C_i \cap C_l$. Consider $G_i$ and $G_j$ in a neighbourhood of $x$. 
Since $G$ is abelian, there exist generators $[\alpha, 1]$ and $[1, \beta]$ of $G_i$ and $G_j$ respectively. 
If $\sigma \in G_i \cap G_j$, then $\sigma|_{C_i}=\sigma|_{C_j}=id$. This implies that $\sigma=id$.    
\end{proof}

\begin{lemma}\label{42}
The group $G$ can be expressed as a direct product $G_1 \times \cdots \times G_r$, where each $G_i$ is cyclic $(1 \le i \le r)$. 
\end{lemma}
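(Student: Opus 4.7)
The plan is to split the proof into two stages: first show the $G_i$'s generate $G$ as a group, then show the resulting surjection
\[
\phi : G_1 \times G_2 \times \cdots \times G_r \longrightarrow G,\quad (\sigma_1,\ldots,\sigma_r)\mapsto \sigma_1\sigma_2\cdots\sigma_r
\]
(a homomorphism since $G$ is abelian) is in fact injective.

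\emph{Generation.} Take any $\sigma\in G\setminus\{id\}$. By Lemma~\ref{35} there exists $x\in S$ with $\sigma(x)=x$, so $\sigma$ lies in the stabilizer $G_x$. Since $\mathbb{P}^2=S/G$ is smooth, the local action of $G_x$ on $T_xS$ satisfies the hypothesis of Chevalley--Shephard--Todd and is generated by pseudoreflections (its diagonalizability was observed in Lemma~\ref{36}). Every such pseudoreflection pointwise fixes a smooth analytic curve through $x$; this germ globalizes to an irreducible component of the ramification divisor $R$, i.e.\ to some $C_k$ containing $x$, and the pseudoreflection therefore lies in $G_k$. By Lemma~\ref{37} at most two of the $C_k$'s pass through $x$, hence $G_x\subseteq G_i\cdot G_j$ for the two (or one) indices with $x\in C_i\cap C_j$. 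In particular $\sigma\in G_1G_2\cdots G_r$, and $\phi$ is surjective.

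\emph{Direct product.} Suppose $\sigma_1\cdots\sigma_r=id$ with $\sigma_i\in G_i$ and some $\sigma_{i_0}\neq id$. Since each $C_k$ is ample (Lemma~\ref{38}), the intersection $C_{i_0}\cap C_j$ is non-empty for every $j\neq i_0$. Choose such a point $x\in C_{i_0}\cap C_j$; the local computation shows $G_x$ is generated by the two commuting pseudoreflections coming from $G_{i_0}$ and $G_j$, and in view of $G_{i_0}\cap G_j=\{id\}$ (Lemma~\ref{41}) this gives $G_x=G_{i_0}\times G_j$ of order $n_{i_0}n_j$. Now apply the relation $\sigma_1\cdots\sigma_r(x)=x$: since $\sigma_{i_0}$ and $\sigma_j$ both fix $x$, the partial product $\prod_{k\notin\{i_0,j\}}\sigma_k$ also fixes $x$ and therefore lies in $G_x=G_{i_0}G_j$. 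This forces $\prod_{k\notin\{i_0,j\}}\sigma_k$ to lie simultaneously in $G_{i_0}G_j$ and in $\prod_{k\notin\{i_0,j\}}G_k$; iterating this reduction (varying $j$ over the remaining indices and repeatedly using Lemma~\ref{41}) forces every $\sigma_k=id$, contradicting $\sigma_{i_0}\neq id$. Hence $\phi$ is injective.

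\emph{Main obstacle.} The delicate point is the iteration in the second stage: in an abelian group, pairwise trivial intersection of cyclic subgroups plus joint generation is not in general sufficient to conclude a direct product (the three order-two subgroups of $\mathbb Z/2\times\mathbb Z/2$ are an elementary counterexample). Making the reduction clean will likely require using the full geometric input -- ampleness, the normal-crossing structure of $R$, and most likely the character $\varepsilon|_{G_i}:G_i\xrightarrow{\sim}\mu_{n_i}$ on $H^{2,0}(S)$ -- to rule out the bad interactions among three or more $G_k$'s.
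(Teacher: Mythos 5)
Your first stage (generation) is essentially the paper's own argument: every nontrivial $\sigma\in G$ has a fixed point by Lemma \ref{35}, the stabilizer of a point of $\mathrm{Supp}\,R$ is generated by elements acting locally as $[\alpha,1]$, $[1,\beta]$ whose divisorial fixed loci are the (at most two, by Lemma \ref{37}) components $C_i,C_j$ through the point, so $\sigma\in G_iG_j$; this half is sound and matches the paper. The gap is in your second stage. From $\sigma_1\cdots\sigma_r=id$ you deduce that $\tau=\prod_{k\notin\{i_0,j\}}\sigma_k$ fixes a point $x\in C_{i_0}\cap C_j$ and hence lies in $G_x=G_{i_0}G_j$ --- but this is vacuous information, since $\tau=(\sigma_{i_0}\sigma_j)^{-1}$ lies in $G_{i_0}G_j$ automatically. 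The decisive claim, that ``iterating this reduction \dots forces every $\sigma_k=id$,'' is never carried out, and it cannot be carried out from the ingredients you invoke: knowing that an element lies in both $G_{i_0}G_j$ and $\prod_{k\notin\{i_0,j\}}G_k$ does not make it trivial if one only knows the \emph{pairwise} trivial intersections of Lemma \ref{41} --- your own Klein four-group example ($G=Z_2\times Z_2$ with its three order-two subgroups) is exactly the configuration your iteration fails to exclude. You say so yourself in the closing paragraph (``Making the reduction clean will likely require using the full geometric input\dots''), so what you have is a plan with the key step left as an admitted open obstacle, not a proof that $G$ is the internal direct product $G_1\times\cdots\times G_r$.

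For what it is worth, the paper's proof follows the same two-step route (fixed points give $\sigma\in G_i$, or $\sigma$ a product of elements of $G_i$ and $G_j$ for the two components through a fixed point; then ``we conclude the assertion from Lemma \ref{41}'') and is just as laconic at precisely the point you flag: it offers no further explanation of how pairwise trivial intersections upgrade to a direct-product decomposition. So you have correctly located where the real content of the lemma lies, but your write-up does not close that step, and the additional input you suggest (the character $\varepsilon$, ampleness from Lemma \ref{38}, the normal-crossing structure) is mentioned only as a possibility rather than used. As it stands the injectivity of $\phi$ is unproved, so the proposal is incomplete.
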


\begin{proof}
For each element $\sigma \in G$, there exists a fixed point of $\sigma$ by Lemma \ref{35}. 
If $F(\sigma)$ contains a curve, then there exist $i$ such that $\sigma|_{C_i}=id$. This means that $\sigma \in G_i$. 
On the other hand, if $F(\sigma)$ consists of only points, then take $x \in F(\sigma)$. 
It is easy to see that there exist two curves $C_i$ and $C_j$ containing $x$. 
Then $\sigma$ can be expressed as a product of elements of $G_i$ and $G_j$. 
Therefore, we conclude the assertion from Lemma \ref{41}.   
\end{proof}

\medskip

Let $\Delta_i$ be the plane curve $\pi(C_i)$ and put $\Delta=\Delta_1+\cdots+\Delta_r$. 

\begin{lemma}\label{44}
Each $\Delta_i$ is smooth $(1 \le i \le r)$ and $\Delta$ has normal crossings.
\end{lemma}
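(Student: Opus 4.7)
My plan is to analyze the local structure of the Galois cover $\pi:S\to\mathbb{P}^2$ near an arbitrary point $p\in\Delta$ by passing through the intermediate quotient $S/G_x$, where $x$ is a preimage of $p$. By Lemma~\ref{38} we have $\pi^{-1}(\Delta_i)=C_i$ set-theoretically, so every preimage $x$ of $p\in\Delta_i$ lies on $C_i$. By Lemma~\ref{37}(3), $x$ lies on at most two components of $R$, giving two cases: either $x$ lies only on $C_i$, or $x\in C_i\cap C_j$ for some $j\ne i$. The proof of Lemma~\ref{42} identifies the stabilizer $G_x$ with $G_i$ in the first case and with $G_i\cdot G_j=G_i\times G_j$ in the second (using Lemma~\ref{41}). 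By the linearization in Lemma~\ref{36}, I can choose local coordinates $(u,v)$ near $x$ so that $G_x$ acts diagonally, $C_i=\{u=0\}$, and in Case~2 also $C_j=\{v=0\}$.

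Next I would form the intermediate quotient $S/G_x$. Since $G_x$ is generated by diagonal pseudo-reflections, $S/G_x$ is smooth near $\bar x$, with explicit local coordinates $(u^{|G_i|},v)$ in Case~1 or $(u^{|G_i|},v^{|G_j|})$ in Case~2. The image $\bar C_i$ of $C_i$ is thus a smooth coordinate curve, and in Case~2 the image $\bar C_j$ of $C_j$ is the other coordinate curve, crossing $\bar C_i$ transversally. The remaining quotient map $q:S/G_x\to S/G=\mathbb{P}^2$ is Galois with group $G/G_x$, and the stabilizer of $\bar x$ under $G/G_x$ is trivial because $G_x$ was already the full stabilizer of $x$ in $G$. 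Hence $q$ is \'etale at $\bar x$, and analytically a local isomorphism; under it, $\bar C_i$ (resp.\ $\bar C_j$) identifies with a smooth local branch of $\Delta_i$ (resp.\ $\Delta_j$) at $p$, and these two branches meet transversally in Case~2.

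The main subtlety I expect is that $p$ may have several preimages in $C_i$, each producing a smooth local branch of $\Delta_i$ by the above; a priori these branches could be distinct analytic components, making $\Delta_i$ singular at $p$. I would resolve this via $G$-equivariance of $\pi$: any two preimages $x,x'\in\pi^{-1}(p)$ satisfy $x'=gx$ for some $g\in G$, and since $\pi\circ g=\pi$ together with $g(C_i)=C_i$ from Lemma~\ref{38}, the image of a neighbourhood of $x$ coincides with the image of the corresponding neighbourhood of $gx$. Hence all of the local branches at $p$ agree, giving $\Delta_i$ a single smooth branch at $p$. Normal crossings of $\Delta_i$ and $\Delta_j$ at points of $\Delta_i\cap\Delta_j$ then follows from Case~2, while $\Delta_i\ne\Delta_j$ for $i\ne j$ follows from $\pi^{-1}(\Delta_i)=C_i$ together with $C_i\ne C_j$.
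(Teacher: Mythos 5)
Your proposal is correct, but it is organized quite differently from the paper's argument. The paper's proof is essentially two lines: by Lemma \ref{38} the group $G$ acts on $C_i$, so $\Delta_i=\pi(C_i)$ is identified with the quotient curve $C_i/G$ of a smooth curve by a finite group, ``hence smooth''; and at a point of $C_i\cap C_j$ it simply invokes $\sigma_i(x)=\sigma_j(x)=x$, $\sigma_k(C_i)=C_i$, $\sigma_k(C_j)=C_j$ to conclude normal crossings. You instead carry out a pointwise local analysis: linearize the stabilizer $G_x$ (identified with $G_i$ or $G_i\times G_j$ via Lemmas \ref{41}--\ref{42}), pass to the intermediate quotient $S/G_x$, which is smooth with the images of $C_i$ (and $C_j$) as transverse coordinate curves, and then use that $G/G_x$ acts with trivial stabilizer at $\bar x$, so $S/G_x\to\mathbb{P}^2$ is a local analytic isomorphism there; finally you use $G$-equivariance to see that all preimages of $p$ on $C_i$ yield the same local branch. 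What your route buys is precisely what the paper leaves implicit: that $\Delta_i$ is smooth as an embedded plane curve (the bijection $C_i/G\to\Delta_i$ could a priori be a non-immersive, e.g.\ cuspidal, parametrization, so abstract smoothness of $C_i/G$ alone does not finish the argument), and that the two branches at a crossing point really meet transversally; the multiple-preimage point you raise is also the right thing to check. The paper's route is much shorter and global, at the cost of these suppressed verifications; your argument depends on the stabilizer computation $G_x=G_i$ or $G_i\cdot G_j$, which is exactly the content of the proof of Lemma \ref{42}, so the dependence is legitimate within the paper's framework.
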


\begin{proof}
In the proof of Lemma \ref{38}, we have shown that $\tau(C_i)=C_i$ for each $\tau \in G$. Therefore $G$ acts on $C_i$ 
and we can consider $C_i/G$. We denote it by $\Delta_i$. Hence $\Delta_i$ is smooth. 
For a point $x \in C_i \cap C_j$ we have $\sigma_i(x)=\sigma_j(x)=x$ and $\sigma_k(C_i)=C_i$ and $\sigma_k(C_j)=C_j$ 
$(1 \le i,j,k \le r)$. Hence $\Delta$ has normal crossings. 
\end{proof}

Put $n_i=|G_i|$. Then we have $n=\prod_{i=1}^r n_i$ by Lemma \ref{42}. 
Denote by $\chi(V)$ the topological Euler characteristic of a curve or a surface $V$. 

\begin{lemma}\label{45}
We have $n_i=2, 3$ or $4$ for each $i$. 
\end{lemma}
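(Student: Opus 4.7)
The plan is to extract $n_i \leq 4$ from the topological Euler characteristic of $S$ computed through the cyclic Galois quotient $\pi_i : S \longrightarrow S_i = S/G_i$ (Lemma \ref{43}), combined with the lower bound $g(C_i) \geq 2$ from Corollary \ref{39}.

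First I would establish that $F(\tau) = C_i$ for every $\tau \in G_i \setminus \{id\}$. Since $G_i$ is cyclic, write $G_i = \langle \sigma \rangle$, so any such $\tau$ is a power $\sigma^d$. Any curve pointwise fixed by $\tau$ is an irreducible component of $R$, hence one of the $C_j$; then $\tau \in G_j$, so Lemma \ref{41} forces $j = i$. Thus the unique curve pointwise fixed by any non-trivial element of $G_i$ is $C_i$. Now if $\tau$ admitted an isolated fixed point $p \notin C_i$, the stabilizer $(G_i)_p$ would be a non-trivial cyclic subgroup of $G_i$, generated by some $\sigma^e$. Its sole fixed curve $C_i$ does not contain $p$, so $\sigma^e$ is not a pseudo-reflection at $p$; by the Chevalley--Shephard--Todd theorem, $S_i$ would then be singular at $\pi_i(p)$, contradicting Lemma \ref{43}.

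With this in hand, Burnside's formula for the Euler characteristic of a finite-group quotient gives
\[
n_i\,\chi(S_i) \;=\; \sum_{\tau \in G_i}\chi\bigl(F(\tau)\bigr) \;=\; 24 - (n_i-1)\bigl(2g(C_i) - 2\bigr).
\]
Since $S_i$ is a smooth rational surface, $\chi(S_i) = 2 + h^{1,1}(S_i) \geq 3$; together with $g(C_i) \geq 2$ this yields $3n_i + 2(n_i - 1) \leq 24$, whence $n_i \leq 5$. To exclude $n_i = 5$, the identity becomes $5\chi(S_i) = 32 - 8g(C_i)$, and $\chi(S_i) \geq 3$ forces $g(C_i) = 2$, giving $\chi(S_i) = 16/5 \notin \mathbb{Z}$ -- a contradiction. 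Since $G_i \neq \{id\}$ gives $n_i \geq 2$, I conclude $n_i \in \{2,3,4\}$. The main obstacle is the first step, ruling out isolated fixed points of $G_i$ off $C_i$ via Chevalley--Shephard--Todd; the rest is routine arithmetic.
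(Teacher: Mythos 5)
Your proof is correct and takes essentially the same route as the paper: both reduce the statement to the identity $24 = n_i\chi(S_i) + (n_i-1)(2g(C_i)-2)$ (the paper via $\chi(S)=\chi(S-C_i)+\chi(C_i)$ and the degree-$n_i$ unramified covering $S-C_i \to S_i-\bar{C_i}$, you via the equivalent Burnside-type formula $n_i\chi(S_i)=\sum_{\tau\in G_i}\chi(F(\tau))$), and then use $\chi(S_i)\geq 3$ and $g(C_i)\geq 2$ to get $n_i\leq 5$ and eliminate $n_i=5$ by arithmetic. Your preliminary step showing $F(\tau)=C_i$ for all $\tau\in G_i\setminus\{id\}$ simply makes explicit the freeness of the $G_i$-action off $C_i$, which the paper's Euler-characteristic count uses implicitly.
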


\begin{proof}
Put $\bar{C_i}=\pi_i(C_i)$ where $\pi_i : S \longrightarrow S_i=S/G_i$. 
Compare $\chi(S)$ and $\chi(S_i)$.  
Since $G_i=\langle \sigma_i \rangle$ and $\sigma_i|_{C_i}=id$, the $C_i$ is isomorphic to $\bar{C_i}$. 
Hence we have 
\[
\begin{array}{ccl}
\chi(S) & = & \chi(S-C_i)+\chi(C_i) \\
        & = & n_i\chi(S_i-\bar{C_i}) + \chi(\bar{C_i}) \\
        & = & n_i\chi(S_i)+(1-n_i)\chi(\bar{C_i})
\end{array}
\]
We have $\chi(C_i)=2-2g(C_i)=\chi(\bar{C_i})$. 
Therefore we have 
\[
24=n_i\chi(S_i)+(n_i-1)(2g(C_i)-2). 
\eqno{(7)}
\] 
Since $S_i$ is a smooth rational surface by Lemma \ref{43}, we have $\chi(S_i) \geq 3$. 
Further, we have $g(C_i) \geq 2$ by Corollary \ref{39}. 
Thus, clearly we have $n_i \le 5$. In case $n_i=5$, we have 
$24=5\chi(S_i)+8(g(C_i)-1)$, but this cannot hold. 
Whence we conclude $n_i \le 4$. 
\end{proof}

Next we consider the branch divisor for $\pi$. 
Put $d_i=\deg \Delta_i \ (1 \le i \le r)$. 

\begin{lemma}\label{46}
We have the equality 
\[
d_1\left(1-\frac{1}{n_1} \right)+ \cdots + d_r \left(1-\frac{1}{n_r}  \right)=3.    
\eqno{(8)}
\]
In particular, we have $r \le 6$. 
\end{lemma}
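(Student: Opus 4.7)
The plan is to derive the equality (8) as a direct consequence of the adjunction/Riemann--Hurwitz relation on $S$, intersected with $D$, after expressing each pullback $\pi^*(\Delta_i)$ purely in terms of $C_i$ and $n_i$.

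First I would recall that since $K_S \sim 0$ and $K_{\mathbb{P}^2}\sim -3\ell$ for a line $\ell$, the ramification formula $K_S\sim \pi^*K_{\mathbb{P}^2}+R$ gives $R\sim 3D$, as already noted in Lemma \ref{17}. Intersecting both sides with $D$ and using $D^2 = \deg \pi = n$ yields
\[
R\cdot D \;=\; 3D^{2}\;=\;3n.
\]
On the other hand, by the assumed decomposition $R=\sum_{i=1}^{r}(n_i-1)C_i$ we have
\[
R\cdot D \;=\; \sum_{i=1}^{r}(n_i-1)\,(C_i\cdot D).
\]

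Next I would compute $C_i\cdot D$. By Lemma \ref{38} every $\tau\in G$ preserves $C_i$, so the $G$-orbit of $C_i$ consists of $C_i$ alone, and the generic ramification index of $\pi$ along $C_i$ equals $|G_i|=n_i$ (since at a general point of $C_i$ the stabilizer acts by some $[\alpha,1]$ with $\alpha$ a primitive $n_i$-th root of unity). Consequently $\pi^{*}(\Delta_i)=n_iC_i$. Applying the projection formula with $D=\pi^{*}\ell$ gives
\[
n_i\,(C_i\cdot D)\;=\;\pi^{*}(\Delta_i)\cdot\pi^{*}(\ell)\;=\;n\,(\Delta_i\cdot\ell)\;=\;n\,d_i,
\]
so $C_i\cdot D = n d_i/n_i$. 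Substituting into the previous identity and dividing by $n$,
\[
3\;=\;\sum_{i=1}^{r}(n_i-1)\frac{d_i}{n_i}\;=\;\sum_{i=1}^{r}d_i\!\left(1-\frac{1}{n_i}\right),
\]
which is (8). The bound $r\le 6$ then follows because $d_i\ge 1$ and $n_i\ge 2$ imply $d_i(1-1/n_i)\ge 1/2$, so $3\ge r/2$.

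There is no serious obstacle here; the only delicate point is justifying $\pi^{*}(\Delta_i)=n_iC_i$, i.e.\ that both the set-theoretic fiber over $\Delta_i$ reduces to $C_i$ (which is exactly the content of Lemma \ref{38}) and that the multiplicity of $C_i$ in this pullback coincides with $n_i=|G_i|$. Once this is in hand, everything else is a bookkeeping application of the projection formula and the triviality of $K_S$.
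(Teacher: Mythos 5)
Your argument is correct and is essentially the same as the paper's: both derive the identity by intersecting $R\sim 3D$ (from triviality of $K_S$) with $D=\pi^*(\ell)$, use $\pi^*(\Delta_i)=n_iC_i$ together with the projection formula to get $C_i\cdot D=nd_i/n_i$, and then divide by $n$; the bound $r\le 6$ follows identically from $n_i\ge 2$, $d_i\ge 1$. Your added justification of $\pi^{*}(\Delta_i)=n_iC_i$ via Lemma \ref{38} is a welcome bit of extra care that the paper leaves implicit.
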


\begin{proof}
Letting $\ell$ be a line in $\mathbb P^2$, we have $\pi^*(3\ell).\pi^*(\ell)=3n$. 
Since $\pi^*(\Delta_i)=n_iC_i$, we have $n_iC_i.\Gamma = \pi^*(\Delta_i).\Gamma=nd_i$, where 
$\Gamma=\pi^*(\ell)$. 
Since $R \sim 3D$ by Lemma \ref{17} and $D \sim \pi^*(\ell)$, we get
\[
(n_1-1)\Gamma C_1 + \cdots + (n_r-1)\Gamma C_r=3n,  
\]
hence 
\[
(n_1-1)\frac{d_1}{n_1}n + \cdots + (n_r-1)\frac{d_r}{n_r}=3n.  
\] 
This proves the equation. Since $n_i \geq 2$ and $d_i \geq 1$, we have $r \le 6$.  
\end{proof}
  
\begin{lemma}\label{47}
We have $d_i \geq 2$ for each $i$. 
\end{lemma}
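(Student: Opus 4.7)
The main idea is to isolate each $C_i$ by factoring $\pi$ through a single cyclic intermediate cover. By Lemma \ref{42} the group $G$ splits as an internal direct product $G = G_i \times H_i$ with complementary subgroup $H_i := \prod_{j \neq i} G_j$, so $\pi$ factors as
\[
S \longrightarrow \overline{S}_i := S / H_i \stackrel{\overline{\pi}_i}{\longrightarrow} \mathbb P^2,
\]
where $\overline{\pi}_i$ is Galois with group $G / H_i \cong G_i \cong Z_{n_i}$.

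First I would check that $\overline{S}_i$ is smooth. By the diagonal local model used in Lemma \ref{36}, at any $x \in S$ the stabilizer in $H_i$ is either trivial, a single cyclic factor $G_j$ with $j \neq i$, or a product $G_j \times G_k$ with $j, k \neq i$; at a crossing point $x \in C_i \cap C_j$ only $G_j \subset H_i$ contributes (not $G_i$). In every case the local $H_i$-action is generated by pseudo-reflections, so the quotient $\overline{S}_i$ is smooth.

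Next I would identify the branch divisor of $\overline{\pi}_i$. For every $j \neq i$ the curve $C_j$ is pointwise fixed by $G_j \subset H_i$, so it collapses to a curve in $\overline{S}_i$ and contributes no codimension-one ramification to $\overline{\pi}_i$. On the other hand $G_i \cap H_i = \{e\}$, so $S \to \overline{S}_i$ is \'etale at the generic point of $C_i$, and $C_i$ maps isomorphically onto a smooth curve $\widehat{C}_i \subset \overline{S}_i$ along which the residual $Z_{n_i}$-action ramifies with full index $n_i$. Thus $\overline{\pi}_i$ is a connected cyclic $Z_{n_i}$-cover of $\mathbb P^2$ totally ramified along the smooth curve $\Delta_i$ and \'etale in codimension one elsewhere.

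Finally, the classical structure theorem for cyclic covers of a smooth variety says that such a $\overline{\pi}_i$ can exist only if there is a line bundle $L \in \mathrm{Pic}(\mathbb P^2)$ with $\Delta_i \sim n_i L$. Since $\mathrm{Pic}(\mathbb P^2) = \mathbb Z[\ell]$, this divisibility forces $n_i \mid d_i$, and Lemma \ref{45} then gives $d_i \geq n_i \geq 2$. The main technical step is the second one: one must simultaneously rule out ramification of $\overline{\pi}_i$ along any $\Delta_j$ with $j \neq i$ and confirm the full index $n_i$ along $\Delta_i$; once this is done, the Picard-group divisibility obstruction closes the argument.
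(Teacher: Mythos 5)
Your reduction is the same as the paper's: both pass to the quotient $\hat{S}_i = S/H_i$ by the complementary subgroup $H_i=\prod_{j\ne i}G_j$ (the paper writes $\hat{G}_i=G/G_i$ but uses exactly this complement, available by Lemma \ref{42}), establish its smoothness, and observe that the residual map $q_i:\hat{S}_i\to\mathbb P^2$ is a degree-$n_i$ cyclic cover totally ramified along $\Delta_i$ and unramified in codimension one elsewhere. The two arguments part ways only at the final step. The paper assumes $d_i=1$ and computes self-intersections: $q_i^*(\ell)=n_i\hat{C}_i$ gives $n_i=(q_i^*\ell)^2=n_i^2\hat{C}_i^2$, which is impossible for an integer $\hat{C}_i^2>0$ unless $n_i=1$. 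You instead invoke the structure theorem for cyclic covers (equivalently, the surjection $\pi_1(\mathbb P^2\setminus\Delta_i)\cong \mathbb Z/d_i\mathbb Z \twoheadrightarrow Z_{n_i}$ classifying the connected cover, using purity of the branch locus) to conclude $\Delta_i\sim n_iL$ in $\mathrm{Pic}(\mathbb P^2)$, hence $n_i\mid d_i$. Both closings are correct; yours needs only normality of $\hat{S}_i$ rather than smoothness and yields the stronger divisibility $n_i\mid d_i$ (so $d_i\ge n_i$), while the paper's is a purely numerical intersection computation that avoids appealing to the covering-space classification. Your step isolating the ramification of $q_i$ to $\Delta_i$ alone, with full index $n_i$ there, is indeed the point that must be checked, and your stabilizer analysis ($G_i\cap H_i=\{e\}$ so $p_i$ is generically \'etale along $C_i$; $G_j\subset H_i$ so all ramification along $C_j$ is absorbed by $p_i$) handles it correctly.
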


\begin{proof}
Put $\hat{G_i}=G/G_i$ and consider the coverings  
\[
p_i : S \longrightarrow S/\hat{G_i}=\hat{S_i} \ \ \mathrm{and} \ \ 
q_i : \hat{S_i} \longrightarrow \mathbb P^2=S/G. 
\] 
By Lemma \ref{38} $\hat{G_i}$ acts on $C_i$, hence put $\hat{C_i}=p_i(C_i)=C_i/\hat{G_i}$. 
By repeating the similar arguments as in the proof of Lemma \ref{43}, we conclude $\hat{S_i}$ is a smooth rational surface. 
Suppose $d_i=1$, Then, $q_i(\hat{C_i})=\Delta_i$ is a line $\ell$. 
Hence we get $q_i^*(\ell)=n_i\hat{C_i}$. This means $q_i^*(\ell)^2=n_i=n_i^2\hat{C_i}^2$. 
Hence $n_i \hat{C_i}^2=1$, i.e., $n_i=1$. 
This is a contradiction. 
\end{proof}

Making use of Lemmas \ref{45}, \ref{46} and \ref{47}, we determine $r, n_i$ and $d_i$ $(1 \le i \le r)$. 

\begin{claim}\label{56}
If there exists $i$ such that $n_i=4$, then $G \cong Z_4$.
\end{claim}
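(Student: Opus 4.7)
The plan is to use equation (8) together with Lemmas \ref{45} and \ref{47} to enumerate the admissible tuples $(r, n_1, d_1, \ldots, n_r, d_r)$ under the hypothesis $n_1 = 4$ (after relabeling), and then to eliminate every configuration with $r \geq 2$. Writing (8) in the form $\frac{3}{4}d_1 + \sum_{j = 2}^r d_j(1 - 1/n_j) = 3$, each term with $j \geq 2$ is at least $d_j/2 \geq 1$, while $\frac{3}{4}d_1 \geq \frac{3}{2}$; hence $r \leq 2$. If $r = 1$, then $d_1 = 4$, and Lemmas \ref{40} and \ref{42} give $G = G_1 \cong Z_4$, as desired. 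If $r = 2$, then necessarily $d_1 = 2$ (else the remaining term is strictly less than $1$) and $d_2(1 - 1/n_2) = 3/2$, which has exactly two integer solutions: (a) $(n_2, d_2) = (4, 2)$, yielding $G \cong Z_4 \times Z_4$ with $|G| = 16$; (b) $(n_2, d_2) = (2, 3)$, yielding $G \cong Z_4 \times Z_2$ with $|G| = 8$. In both subcases the relations $\pi^*(\Delta_1) = 4 C_1$, $\Delta_1 \sim 2 \ell$ and $\pi^*(\ell) \sim D$ give $2 C_1 \sim D$, hence $C_1^2 = D^2/4$ and $g(C_1) = C_1^2/2 + 1$ by adjunction.

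Subcase (b) falls to a numerical check. Here $D^2 = 8$, so $C_1^2 = 2$ and $g(C_1) = 2$. Substituting into equation (7) with $n_1 = 4$ gives $24 = 4\chi(S_1) + 6$, i.e.\ $\chi(S_1) = 9/2$, which is not an integer; this contradicts the smoothness of $S_1$ established in Lemma \ref{43}.

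Subcase (a) is the main obstacle, because the numerical invariants remain consistent: $D^2 = 16$, $C_1^2 = 4$, $g(C_1) = 3$, and equation (7) yields the integer value $\chi(S_1) = 3$. To extract a contradiction I would first use the classification of smooth rational surfaces --- Hirzebruch surfaces have $\chi = 4$ and blow-ups strictly increase $\chi$ --- to conclude that $S_1 \cong \mathbb{P}^2$. Since $G_1$ is the kernel of $G \to \mathrm{Aut}(S/G_1)$, the factor $G_2 \cong Z_4$ embeds faithfully in $\mathrm{Aut}(\mathbb{P}^2) = PGL_3(\mathbb{C})$, and its generator $\sigma_2$ fixes the irreducible curve $\bar C_2 = \pi_1(C_2)$ pointwise. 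Because the fixed locus in $\mathbb{P}^2$ of any non-identity element of $PGL_3(\mathbb{C})$ is a union of linear subspaces, $\bar C_2$ must be a line, and a direct diagonalization then forces $\sigma_2$ to be conjugate to $\mathrm{diag}(1, 1, e_4)$, whose fixed locus consists of that line together with one isolated point at which $\sigma_2$ acts on local coordinates by a scalar primitive fourth root of unity. Consequently the quotient $S_1/G_2$ acquires a cyclic quotient singularity of type $\tfrac{1}{4}(1,1)$ at the image of that isolated point. This contradicts Proposition \ref{15}, by which $S_1/G_2 = S/G \cong \mathbb{P}^2$ is smooth. The genuinely hard step is this last one: Hurwitz counts on $C \to \mathbb{P}^1$, Bezout applied to $\Delta_1$ and $\Delta_2$, and equation (7) all remain consistent in subcase (a), so the contradiction can only be produced by the local geometry of the $G_2$-action on $\mathbb{P}^2$ at its isolated fixed point.
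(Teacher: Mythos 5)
Your proposal is correct, and its overall skeleton (use Lemmas \ref{45} and \ref{47} together with (8) to force $r\le 2$ and enumerate $(n_2,d_2)$, with $r=1$ giving $G=G_1\cong Z_4$) matches the paper's; but your elimination of the two surviving non-cyclic configurations runs along genuinely different lines. For $(n_2,d_2)=(2,3)$, i.e.\ $G\cong Z_4\times Z_2$, the paper observes that $S/G_1\to\mathbb P^2$ would be a double cover branched exactly along the cubic $\Delta_2$, impossible since the branch curve of a double cover of $\mathbb P^2$ must have even degree; you instead feed $g(C_1)=2$ into (7) and get the non-integer $\chi(S_1)=9/2$, which is an equally valid and arguably slicker arithmetic contradiction. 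For $(n_2,d_2)=(4,2)$, i.e.\ $G\cong Z_4\times Z_4$, the paper disposes of the case in two lines by stratifying $\pi$ over $\Delta_1\cup\Delta_2$ and computing $\chi(S)=36\ne 24$; your route is heavier -- $\chi(S_1)=3$ forces $S_1\cong\mathbb P^2$, the residual $Z_4\cong G/G_1$ acts faithfully there (kernel exactly $G_1$ by the Galois correspondence on function fields), its generator fixes the curve $\pi_1(C_2)$ pointwise, hence is conjugate to $[1,1,e_4]$, and the scalar action at the isolated fixed point produces a $\tfrac14(1,1)$ singularity on $S_1/G_2=S/G\cong\mathbb P^2$, contradicting Proposition \ref{15} -- but it is sound, and it has the merit of exhibiting the geometric obstruction rather than a numerical mismatch. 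Two small points you should make explicit: passing from $4C_1\sim 2D$ to $C_1^2=D^2/4$ either needs torsion-freeness of $\mathrm{Pic}(S)$ (true for a $K3$) or should simply be done at the level of intersection numbers, and the faithfulness of the $G/G_1$-action on $S/G_1$, which your singularity argument relies on, deserves the one-line justification via $k(S)^{G_1}$ indicated above.
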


\begin{proof} 
We may assume $i=1$. 
We prove the claim by examining the cases: 

\begin{enumerate}
\item[(i)] $r=1$. 
\item[(ii)] There exists $j \geq 2$ such that $n_j=4$. 
\item[(iii)] $n_j \le 3$ for all $j \geq 2$ and 
there exists for some $j \geq 2$ such that $n_j=3$ 
\item[(iv)] $n_j=2$ for all $j \geq 2$  
\end{enumerate}

In case of (i) we observe the equality (7).  
We have $24=4\chi(S_1)+6(g(C_1)-1)$. Since $g_1=g(C_1) \geq 2$, we have $g_1=3$ and $\chi(S_1)=3$. 
Since $S_1$ is a smooth rational surface, we have $S_1 \cong \mathbb P^2$, hence $d_1=4$.  
Therefore we have $G \cong Z_4$. 

The case (ii) does not occur. Suppose otherwise. Then, since $d_j \geq 2$, we infer from (8) that $r=2$, $d_1=d_2=2$ and $n_1=n_2=4$. 
Note that $\sigma_1|_{C_1}=id$ and $\sigma_1$ acts on $C_2$. 
Thus we have $\chi(\Delta_i)=2$ and $\chi(C_i)=-4$ $(i=1, 2)$. 
Then we get 
\[
\begin{array}{ccl}
\chi(S) & = & \chi(S-(C_1 \cup C_2))+\chi(C_1 \cup C_2) \\
        & = & 16 \chi(\mathbb P^2-(\Delta_1 \cup \Delta_2)) + \chi(\Delta_1)+\chi(\Delta_2)-\chi(\Delta_1 \cap \Delta_2) \\
        & = & 36. 
\end{array}
\]
which is a contradiction. 

The case (iii) does not occur. Suppose otherwise. Then, from (8) we have 
\[
3 \geq d_1 \left(1 - \frac{1}{4}\right)+ d_2 \left(1-\frac{1}{3} \right) \geq  \frac{3}{2}+\frac{2}{3}d_j. 
\]
Since $d_j \ne 1$, we have $d_j=2$. This means that $r=2$ and $3d_2/4 = 5/3$, which is a contradiction.
 
The case (iv) does not  occur. Suppose otherwise. Then, from (8) we have that $3d_1+2(d_2 + \cdots + d_r)=12$, which implies that $r=2$ and $d_2=3$, i.e., $n_1=4, d_1=2$ and $n_2=2, d_2=3$. 
Note that $\sigma_1|_{C_1}=id$ and $\sigma_1$ acts on $C_2$. We infer readily that $C_2':=C_2/G$ is a smooth curve in $S_1:=S/G_1 \cong \mathbb P^2$. 
We have $\pi=q_1 \cdot p_1$, where $p_1 : S \longrightarrow S_1$ and $q_1 : S_1 \longrightarrow S/G \cong \mathbb P^2$. 
Then $q_1 : S_1 \longrightarrow \mathbb P^2$ is a double covering branched along just $\Delta_2$, which is cubic. 
This is a contradiction. 
\end{proof}

Therefore we assume $n_i=2$ or $3$. Since $d_i \geq 2$, we have $r \le 3$. 
\begin{enumerate}
\item In case $r=3$, it is easy to see that $d_i=n_i=2$ for $i=1, 2, 3$. Then, $G \cong {Z_2}^3$. 
\item In case $r=2$, we have 
\[
d_1\left(1- \frac{1}{n_1}\right)+d_2\left(1-\frac{1}{n_2}\right)=3. 
\]
Then we have 
$5 \le d_1+d_2 \le 6$. We assume $d_1 \geq d_2$ and find the solutions.  
Here we use the notation $(a,b;c,d)$, which means $a=d_1, b=n_1$ and $c=d_2, d=n_2$. 

(b-1) In the case $d_1+d_2=6$, we have $(4,2;2,2)$ or $(3,2;3,2)$.  

(b-2) In the case $d_1+d_2=5$, we have $(3,3;2,2)$ or $(3,2;2,4)$.  
\end{enumerate}

\begin{claim}\label{49}
The case $r=2$ and $n_1=n_2=2$ does not occur.
\end{claim}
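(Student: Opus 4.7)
The plan is to invoke the Galois embedding hypothesis — which, as the author flagged, is used only here in this section — and combine it with Corollary \ref{a4}. From the case analysis preceding the claim, the conditions $r=2$ and $n_1=n_2=2$ leave exactly two arithmetic possibilities, $(d_1,n_1;d_2,n_2)=(4,2;2,2)$ and $(3,2;3,2)$. In either case Lemma \ref{42} writes $G$ as the internal direct product $G_1\times G_2$ of cyclic groups of orders $n_1$ and $n_2$, so $G\cong Z_2\times Z_2$ and $|G|=4$. Since the covering $\pi$ arises from a Galois embedding, this yields $D^2=|G|=4$.

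The next step is to compute the dimension of the ambient projective space. Because $S$ is a $K3$ surface with $K_S=0$ and $D$ is ample, Kodaira vanishing gives $h^i(S,\mathcal O(D))=0$ for $i\geq 1$, and Riemann--Roch then yields
\[
h^0(S,\mathcal O(D))\;=\;\chi(\mathcal O_S)+\tfrac12 D^2\;=\;2+2\;=\;4.
\]
Therefore $f_D$ embeds $S$ into $\mathbb P^{N}=\mathbb P^3$, and $f_D(S)$ is a smooth hypersurface of degree $D^2=4$.

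At this point Corollary \ref{a4} finishes the argument: whenever the image of a Galois embedding is a hypersurface, the Galois group must be cyclic. This directly contradicts $G\cong Z_2\times Z_2$, and so neither $(4,2;2,2)$ nor $(3,2;3,2)$ can actually occur. I do not anticipate any real obstacle; the one point worth pausing on is the jump from ``$n_1=n_2=2$'' to ``$G\cong Z_2\times Z_2$'' (as opposed to $Z_4$), but Lemma \ref{42} supplies this immediately since the $G_i$ are each cyclic of the prescribed order $n_i=2$ and $G$ is their internal direct product.
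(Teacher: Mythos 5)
Your proof is correct and follows essentially the same route as the paper: from $|G|=D^2=4$ and $h^0(S,\mathcal O(D))=4$ you conclude $f_D(S)$ is a quartic surface in $\mathbb P^3$, and then Corollary \ref{a4} forces the Galois group to be cyclic, contradicting $G\cong Z_2\times Z_2$. The Riemann--Roch computation and the appeal to Lemma \ref{42} are just explicit fillings-in of steps the paper leaves implicit.
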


\begin{proof}
We show that $G$ cannot be isomorphic to $Z_2 \times Z_2$. 
Suppose $(S, D)$ gives a Galois embedding. Then, $|G|=D^2=4$ and $\dim {\mathop{\mathrm{H^0}}\nolimits}(S,\ {\mathcal O(D)}=4$. 
Thus $f_D(S)$ is a quartic surface in $\mathbb P^3$. By Corollary \ref{a4} the Galois group must be 
cyclic, which is a contradiction. 
\end{proof}

\begin{claim}\label{}
The case $(3,2;2,4)$ does not occur.
\end{claim}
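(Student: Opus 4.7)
The plan is to derive a numerical contradiction by computing $g(C_2)$ in two independent ways and observing that the two values are incompatible.

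In the case $(3,2;2,4)$ we have $n_1=2$, $d_1=3$, $n_2=4$, $d_2=2$, so by Lemma \ref{42} the Galois group is $G=G_1\times G_2\cong Z_2\times Z_4$ of order $n=|G|=8$. I will focus on the component $C_2$ fixed pointwise by the $Z_4$-factor.

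First I would compute $C_2^2$ directly from the covering. By Lemma \ref{38} the set-theoretic preimage of $\Delta_2$ is just $C_2$, and since the stabilizer $G_2$ has order $n_2=4$ the scheme-theoretic pullback is $\pi^*(\Delta_2)=4\,C_2$. Because $\Delta_2$ is a conic in $\mathbb{P}^2$, the projection formula yields
\[
16\,C_2^2 \;=\; (4C_2)^2 \;=\; \deg(\pi)\cdot\Delta_2^{\,2} \;=\; 8\cdot 4 \;=\; 32,
\]
so $C_2^2=2$. Since $S$ is a $K3$ surface, adjunction then gives $2g(C_2)-2=C_2^2=2$, i.e., $g(C_2)=2$.

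Next I would apply equation (7) with $i=2$, which reads
\[
24 \;=\; 4\chi(S_2)+3\bigl(2g(C_2)-2\bigr) \;=\; 4\chi(S_2)+6,
\]
forcing $\chi(S_2)=9/2$. Since $\chi(S_2)$ must be an integer, this is the desired contradiction. The argument is essentially a numerical consistency check between the adjunction formula on $S$ and equation (7), and since all the required tools have already been set up in the previous lemmas I do not anticipate any serious obstacle; the only point requiring care is matching the multiplicity $n_2$ in the pullback $\pi^*(\Delta_2)=n_2 C_2$, which is guaranteed by Lemma \ref{38}.
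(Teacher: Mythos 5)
Your proof is correct, and it takes a genuinely different route from the paper's. The paper disposes of $(3,2;2,4)$ with a parity obstruction: $S_2=S/G_2$ is a double cover of $\mathbb{P}^2$ branched along $\Delta_1$, and since $\deg\Delta_1=3$ is odd such a double cover cannot exist (the branch divisor of a double cover of $\mathbb{P}^2$ must have even degree). This is the same mechanism the paper already used to kill case (iv) of Claim \ref{56}, which is in fact this very configuration with the indices relabeled. Your argument instead extracts $C_2^2=2$ from $\pi^*(\Delta_2)=n_2C_2$ together with the projection formula (both of which the paper establishes in the proof of Lemma \ref{46}), converts this to $g(C_2)=2$ by adjunction on the $K3$, and then feeds it into equation (7) to get the non-integral value $\chi(S_2)=9/2$. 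All the inputs are available in the paper, the arithmetic checks out, and the conclusion is a clean integrality contradiction; the only caveat is that you inherit the implicit assumption in the derivation of (7) that $\pi_i\colon S\to S_i$ is unramified off $C_i$ (no isolated fixed points of elements of $G_i$ away from $C_i$), but the paper makes the same assumption wherever it invokes (7), so this is not a new gap. A pleasant feature of your method is that it is uniform: the same computation of $g(C_i)$ from $(d_i,n_i,n)$ plugged into (7) could be used to test the other candidate tuples as well, whereas the paper's parity argument is specific to the $n_i=2$ quotients.
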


\begin{proof}
Suppose otherwise. Then, there exists a smooth surface $S_2=S/G_2$, which is a double covering of 
$S/G \cong \mathbb P^2$ branched along $\Delta_1$. 
However, $\deg \Delta_1$ is odd, hence the double covering cannot exist. This is a contradiction. 
\end{proof}

Thus only the case $n_1=3$ and $n_2=2$ remains, which corresponds to $G \cong Z_3 \times Z_2 \cong Z_6$. 
Combining the results above, we complete the proof. The last assertion $S \cong S_{(222)}$ will be proved in 
Theorem \ref{11} below.

\section{particulars}
In this section we describe all the surfaces in Theorem \ref{51}. 
We study the Galois embeddings of $S$ in detail for $D^2=2m$, where $m=2,3$ and $4$. 
Let $C$ be a general member of the complete linear system $|D|$. Then we have $g(C)=m+1$, where $g=g(C)$ is the genus of $C$ 
and $\dim \mathrm{H}^0(S,{\mathcal O}(D))=g+1$.
So $f_D(S)$ is assumed to be embedded in $\mathbb P^g$. 

\bigskip

\underline{CASE 1. $g=3$}

Assume $|G|=D^2=4$. Then $G \cong Z_4$. 
Taking suitable homogeneous coordinates such that the Galois point be $X_0=X_1=X_2=0$, then 
$\beta_1(\sigma)$, which is the projective transformation (3) defined in Section 2, can be expressed as a diagonal matrix $[1, 1, 1, e_4]$. 
Since the defining equation of $f_D(S)$ is invariant by this transformation, we infer readily the following. 

\begin{theorem}\label{52}
We have that $g=3$ if and only if $G \cong Z_4$. In this case the defining equation of $f_D(S)$ can be given by 
$X_3^4+F_4(X_0, X_1, X_2)=0$, where $F_4(X_0, X_1, X_2)$ is a form of degree four. 
\end{theorem}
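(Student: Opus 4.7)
The plan is to combine Theorem \ref{51} with the completely reducible form of representation (3). The biconditional $g=3 \iff G\cong Z_4$ is immediate: on a K3 surface with very ample $D$ one has $2g-2 = D^2$ for any smooth $C\in|D|$, and by condition (1) of Theorem \ref{8} we have $|G|=D^2$; thus $g=3$ is the same as $|G|=4$, and Theorem \ref{51} leaves $Z_4$ as the only possibility among $\{Z_4, Z_6, Z_2^3\}$.

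For the normal form of the equation, I would note that with $g=3$ we have $N=3$ and $\dim W=N-n-1=0$, so the Galois subspace $W$ is a single point. I place it at $(0:0:0:1)$, taking the $\mathcal{L}$ of Theorem \ref{8} to be spanned by $X_0, X_1, X_2$. In these coordinates the completely reducible form $\beta_2$ of (3) reads $\lambda_\sigma\,\mathbf{1}_3 \oplus \mu_\sigma$ for the generator $\sigma$ of $G$, so projectively $\sigma$ acts as the diagonal matrix $[1,1,1,\alpha]$ with $\alpha=\mu_\sigma/\lambda_\sigma$. Faithfulness of $\beta$ (Representation \ref{18}) together with $\mathrm{ord}(\sigma)=4$ forces $\alpha$ to be a primitive fourth root of unity, and after a diagonal rescaling I may take $\alpha = e_4$.

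Next, I would write the defining quartic as
\[
F=\sum_{k=0}^{4} b_k(X_0,X_1,X_2)\, X_3^k, \qquad \deg b_k = 4-k,
\]
and compare $\sigma^* F = \zeta F$ coefficient by coefficient: this gives $e_4^{k} b_k = \zeta b_k$ for every $k$. If $\zeta = e_4^j$ with $j\in\{1,2,3\}$, only $b_j$ can be nonzero, and $F = b_j X_3^j$ is reducible, contradicting that $f_D(S)$ is a smooth K3 quartic. Hence $\zeta=1$, so only $b_0$ and $b_4$ survive. The condition $W\cap f_D(S)=\emptyset$ from Definition \ref{1} gives $F(0,0,0,1)=b_4\ne 0$; rescaling $X_3$ produces $b_4=1$ and yields the stated form with $F_4 := b_0$.

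The only nontrivial input beyond Theorem \ref{51} and representation (3) is the observation that $W$ does not lie on $S$, which is built into Definition \ref{1}; the remaining case analysis on $\zeta$ is elementary semi-invariant bookkeeping, so I do not anticipate any real obstacle.
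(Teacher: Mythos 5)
Your proposal is correct and follows essentially the same route as the paper: the paper likewise reduces $g=3$ to $|G|=D^2=4$, places the Galois point at $(0:0:0:1)$ so that the generator acts as the diagonal matrix $[1,1,1,e_4]$, and reads off $X_3^4+F_4(X_0,X_1,X_2)=0$ from invariance of the defining quartic, so your semi-invariant bookkeeping merely makes explicit the step the paper dismisses with ``we infer readily.'' One cosmetic slip: a diagonal change of coordinates does not alter the eigenvalue ratio $\alpha$, so to normalize $\alpha=e_4$ rather than $e_4^{-1}$ you should instead replace the generator $\sigma$ by $\sigma^{-1}$, which changes nothing in the rest of the argument.
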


\begin{remark}\label{113}
The maximal number of Galois points for the surface in Theorem \ref{52} is four. 
And it is four if and only if it is the Fermat quartic (Example \ref{48}). 
\end{remark}

We can show a relation between the possibility of Galois embedding and the Picard number $\rho(S)$ for $S$. 

\begin{lemma}\label{54}
For the surface $S$ in Theorem \ref{52} we have $\rho(S) \geq 2 $. 
\end{lemma}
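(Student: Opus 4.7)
The plan is to exploit the non-symplectic involution $\tau = \sigma^2$, where $\sigma$ generates $G \cong Z_4$. In the coordinates of Theorem \ref{52}, $\sigma$ acts on $\mathbb P^3$ as the diagonal $[1,1,1,e_4]$, and hence $\tau$ acts as $[1,1,1,-1]$. The fixed locus of $\tau$ on $S$ is the hyperplane section $S \cap \{X_3 = 0\} = \{X_3 = F_4 = 0\}$, which is the smooth plane quartic $C_1$ of genus three. First I would verify non-symplecticity by a short residue computation on the affine chart $X_0 = 1$: writing $\omega_S = dx_1 \wedge dx_2 / (4 x_3^3)$, one gets $\sigma^* \omega_S = e_4^{-3} \omega_S = e_4 \omega_S$, so $\tau^* \omega_S = -\omega_S$.

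Next I form the quotient $\tilde S = S/\tau$, which is smooth because $\tau$ has a smooth fixed curve. Setting $Y = X_3^2$ turns the defining equation $X_3^4 + F_4 = 0$ into $Y^2 + F_4(X_0, X_1, X_2) = 0$, exhibiting $\tilde S$ as a weighted hypersurface in $\mathbb P(1,1,1,2)$, or equivalently as the double cover of $\mathbb P^2$ branched along the smooth quartic $\Delta_1 = \{F_4 = 0\}$; the Galois covering $\pi$ factors as $S \to \tilde S \to \mathbb P^2$. Such a double cover is a classical del Pezzo surface of degree two: adjunction in $\mathbb P(1,1,1,2)$ gives $K_{\tilde S} = \mathcal O(-1)|_{\tilde S}$ with $K_{\tilde S}^2 = 2$, and $\chi_{\mathrm{top}}(\tilde S) = 10$, so $\rho(\tilde S) = 8$.

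Finally, since the quotient morphism $p : S \to \tilde S$ is a finite surjection of degree two, the pullback $p^* : \mathrm{NS}(\tilde S) \otimes \mathbb Q \to \mathrm{NS}(S) \otimes \mathbb Q$ is injective (because $p_* p^*$ equals multiplication by $2$). This yields $\rho(S) \geq \rho(\tilde S) = 8$, which is in fact much stronger than the stated bound $\rho(S) \geq 2$. The only slightly delicate step is the initial verification that $\tau$ is non-symplectic; the identification of $\tilde S$ as a del Pezzo surface of degree two and the pullback inequality are entirely routine. An equivalent route, bypassing $\tilde S$, would be to apply the topological Lefschetz fixed point formula to $\tau$: $\chi(C_1) = -4 = 2 + \mathrm{tr}(\tau^* | H^2(S, \mathbb C))$ gives trace $-6$, so the invariant subspace of $H^2$ has dimension $8$, and non-symplecticity forces these invariants to be of type $(1,1)$ and hence algebraic.
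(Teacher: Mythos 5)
Your proof is correct, and it takes a genuinely different route from the paper's. The paper works with the branch quartic $\Delta=\{F_4=0\}$ in $\mathbb P^2$: it first shows (via the dual curve and the genus formula) that $\Delta$ has at least $16$ bitangent lines, and then shows that for a bitangent $\ell$ the pullback $\pi^*(\ell)$ cannot be irreducible (Riemann--Hurwitz on its normalization would give a negative genus), so it splits as $\Gamma_1+\Gamma_2$ with each $\Gamma_i$ a $(-2)$-curve; since a $(-2)$-curve is not proportional to the hyperplane class $D$ with $D^2=4$, this gives $\rho(S)\ge 2$. You instead pass to the intermediate quotient $\tilde S=S/\langle\sigma^2\rangle$, identify it as the degree-two del Pezzo surface $Y^2+F_4=0$ in $\mathbb P(1,1,1,2)$ with $\rho(\tilde S)=8$, and pull back, using that $p_*p^*=2\cdot\mathrm{id}$ makes $p^*$ injective on $\mathrm{NS}\otimes\mathbb Q$; your Lefschetz fixed-point variant is an equivalent packaging of the same idea. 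All the steps check out: $\sigma^*\omega_S=e_4\omega_S$ is the correct residue computation, the fixed locus of $\sigma^2$ on $S$ is exactly the smooth genus-three curve $\{X_3=F_4=0\}$ (the point $(0{:}0{:}0{:}1)$ is not on $S$), and a del Pezzo of degree two indeed has Picard number $8$. Your argument yields the much stronger bound $\rho(S)\ge 8$ and needs no input about bitangents; what the paper's argument buys in exchange is an explicit supply of $(-2)$-curves on $S$ lying over the $28$ bitangents of $\Delta$, which is concrete geometric information about where the extra Picard rank comes from. (Incidentally, you could have shortened the non-symplecticity check by quoting the standard fact that a symplectic involution of a $K3$ surface has only isolated fixed points, whereas your $\tau$ fixes a curve.)
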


Before proceeding with the proof we note the following.

\begin{remark}\label{55}
A smooth quartic plane curve $\Delta$ has at least 16 bitangent lines.
\end{remark}

\begin{proof}
Let $\hat{\Delta}$ be the dual curve of $\Delta$. 
Then we have $\deg \hat{\Delta}=12$ and the genus of smooth model of $\hat{\Delta}$ is 3. 
Let $T_P$ be the tangent line to $\Delta$ at $P$. If the intersection number of $T_P$ and $\Delta$ at 
$P$ is $i+2 \geq 3$, then $P$ is said to be an $i$-flex. 
Letting $a_i$ be the number of $i$-flexes of $\Delta$ $(i=1, 2)$, we see that $\hat{\Delta}$ has 
$a_1$-pieces of $(2, 3)$ cusps and $a_2$-pieces of $(3, 4)$ cusps. 
Referring to \cite[Theorem 6.11]{ii}, we get $a_1+2a_2=24$. 
If $b$ is the number of nodes of $\hat{\Delta}$, then, applying the genus formula \cite[Theorem 9.1]{ii}, 
we get $b \geq 16$. Since a node of $\hat{\Delta}$ corresponds to a bitangent line of $\Delta$, the proof is 
complete. 
\end{proof}

Let $\Delta$ be the branch locus of $\pi:S \longrightarrow \mathbb P^2$, which is a smooth quartic curve. 
Let $\ell$ be a bitangent line to $\Delta$ and we consider $\pi^*(\ell)$.  

\begin{claim}
The curve $\pi^*(\ell)=\Gamma$ is a sum of two $(-2)$-curves.
\end{claim}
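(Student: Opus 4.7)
My plan is to analyze $\pi^*(\ell)$ locally above each of the two tangent points $P, Q \in \ell \cap \Delta$ and then assemble the global picture using the $Z_4$-action. Since $\pi$ is a cyclic degree-$4$ cover of $\mathbb P^2$ fully ramified along the smooth quartic $\Delta$, and $\mathcal O_{\mathbb P^2}(\Delta) \cong \mathcal O(1)^{\otimes 4}$, the surface $S$ can be presented globally as $\{w^4 = f\}$ with $\Delta = \{f = 0\}$, so locally $w^4$ equals a local defining equation of $\Delta$.

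Near a tangent point $P$ I would choose local analytic coordinates $(u,v)$ on $\mathbb P^2$ so that $\Delta = \{v - u^2 = 0\}$ and $\ell = \{v = 0\}$; then $S$ is parameterized near $\tilde P := \pi^{-1}(P)$ by $(u,w)$ with $\pi(u,w) = (u,\, u^2 + w^4)$, and a Galois generator acts by $\sigma(u,w) = (u, e_4 w)$. In these coordinates $\pi^*(\ell) = \{u^2 + w^4 = 0\}$ factors as $(w^2+iu)(w^2-iu)$, so locally $\pi^*(\ell)$ is the union, with multiplicity one each, of two smooth branches meeting at $\tilde P$ with intersection number $2$. A direct substitution shows that $\sigma$ exchanges these two branches while $\sigma^2$ preserves each. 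The analogous analysis at $Q$ is identical.

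To globalize: $\pi$ is \'etale over $\mathbb P^2 \setminus \Delta$, so $\pi^{-1}(\ell)$ is smooth away from $\tilde P, \tilde Q$; together with the local smoothness of each branch at the two ramification points this makes every irreducible component of $\pi^*(\ell)$ smooth. Since $\sigma$ swaps the two local branches at $\tilde P$, the $G$-orbit of any component has size exactly $2$, forcing $\pi^*(\ell) = \Gamma_1 + \Gamma_2$ with $\{\Gamma_1, \Gamma_2\}$ a single orbit (each appearing with multiplicity one by the local factorization). Each $\Gamma_i \to \ell$ is then a double cover branched only at $P$ and $Q$, so by Riemann--Hurwitz $g(\Gamma_i) = 0$; since $K_S = 0$, adjunction yields $\Gamma_i^2 = 2g(\Gamma_i) - 2 = -2$, which is the claim. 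As a consistency check, $\Gamma_1 \cdot \Gamma_2 = 2+2 = 4$ from the local intersection numbers at $\tilde P, \tilde Q$, and $(\Gamma_1 + \Gamma_2)^2 = \pi^*(\ell)^2 = D^2 = 4 = 2(-2) + 2\cdot 4$, in agreement.

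The main obstacle is justifying the local normal form $w^4 = v - u^2$ at a tangent point; once this follows from the standard classification of cyclic covers branched along a smooth divisor, the factorization $u^2 + w^4 = (w^2+iu)(w^2-iu)$ and the Galois action handle the rest almost automatically.
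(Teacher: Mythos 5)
Your local analysis at the tangency points is correct (the normal form $w^4=v-u^2$, the factorization $u^2+w^4=(w^2+iu)(w^2-iu)$, the fact that $\sigma$ swaps the two branches and $\sigma^2$ preserves each), but the globalization step has a genuine gap. From ``each local branch at $\tilde P,\tilde Q$ is smooth'' you conclude that ``every irreducible component of $\pi^*(\ell)$ is smooth,'' and from this that the $G$-orbit of a component has size exactly $2$. That inference fails: nothing in the local picture prevents a single irreducible component from containing \emph{both} branches at $\tilde P$ (and at $\tilde Q$), in which case it is singular there -- a tacnode $y^2=x^4$ -- and $\sigma$ swapping the two branches is perfectly compatible with $\sigma$ preserving that one component. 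In other words, the possibility that $\pi^{-1}(\ell)$ is irreducible is exactly what must be excluded, and your argument never excludes it; the swap of branches only shows the stabilizer of a component contains $\sigma^2$, i.e.\ that there are at most two components. This excluded case is precisely where the paper puts its effort: it assumes $\Gamma$ irreducible, observes the two tacnodal points, normalizes, and applies Riemann--Hurwitz to the degree-$4$ cyclic cover $\widetilde\Gamma\to\ell$ to get $2g(\widetilde\Gamma)-2=-4$, a contradiction.

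The gap is easy to close within your own setup, and then your route is actually cleaner than a purely local one: since $\ell$ is a \emph{bitangent}, the restriction $f|_\ell$ of the quartic defining $\Delta$ is a square, $f|_\ell=c\,g^2$ with $g$ a quadratic vanishing simply at $P$ and $Q$. Hence over $\ell$ the equation $w^4=f|_\ell$ factors globally, $w^4-c g^2=(w^2-\sqrt c\,g)(w^2+\sqrt c\,g)$, so $\pi^*(\ell)=\Gamma_1+\Gamma_2$ with the two components interchanged by $\sigma$. (Alternatively, keep your decomposition hypothesis-free by running the paper's Riemann--Hurwitz contradiction on the normalization of a hypothetical irreducible $\Gamma$.) After the splitting is in hand, your computation that each $\Gamma_i\to\ell$ is a double cover branched only at $P,Q$, hence rational, and that adjunction on the K3 gives $\Gamma_i^2=-2$, together with the check $\Gamma_1.\Gamma_2=4$, is correct and agrees with the paper.
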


\begin{proof}
Let $P_1$ and $P_2$ be $\pi^{-1}(\ell \cap \Delta)$. 
Suppose $\Gamma$ is irreducible. Then, it is not difficult to see by local consideration that it has two singular points $P_i \ (i=1,2)$.  
Each $P_i$ is locally isomorphic to the singularity defined by $y^2=x^4$. 
Let $\mu:\widetilde{\Gamma} \longrightarrow \Gamma$ be the resolution of singularities. 
Then $\pi|_{\Gamma} \cdot \mu:\widetilde{\Gamma} \longrightarrow \ell$ is a cyclic Galois covering of degree 4. 
Then, by Riemann-Huwitz formula we have $2g(\widetilde{\Gamma})-2=4(-2)+4=-4$, which is a contradiction. 
Hence we have $\pi^*(\ell)=\Gamma_1+\Gamma_2$, where $\Gamma_i$ ($i=1,2$) is a $(-2)$-curve and $\Gamma_1.\Gamma_2=4$.   
\end{proof}

From this claim Lemma \ref{54} is clear. 

\bigskip

\underline{CASE 2. $g=4$}

In this case $|G|=6$. So $G \cong Z_6$ and put $G=\langle \sigma \rangle$.  

\begin{theorem}\label{53}
If $G \cong Z_6$, then $f_D(S)$ is a $(2,3)$-complete intersection, furthermore the defining equation of $f_D(S)$ can be given by 
$F_2(X_0, X_1, X_2)+{X_3}^2=F_3(X_0, X_1, X_2)+{X_4}^3=0$, where $F_i(X_0, X_1, X_2)$ is a form of $X_0, X_1, X_2$ with degree $i$ $(i=2,3)$ 
such that each curve $F_i=0$ in $\mathbb P^2$ has no singular points.  
\end{theorem}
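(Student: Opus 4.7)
The plan is to realize $f_D(S)$ as a $(2,3)$-complete intersection in $\mathbb{P}^4$ and then diagonalize the generator of $G$ to pin down the two defining forms. First, I would run a Riemann-Roch count on the K3 surface: with $D^2 = 6$ one has $h^0(S, \mathcal{O}(2D)) = 14$ and $h^0(S, \mathcal{O}(3D)) = 29$, while $h^0(\mathbb{P}^4, \mathcal{O}(2)) = 15$ and $h^0(\mathbb{P}^4, \mathcal{O}(3)) = 35$. The first inequality forces a nonzero quadric $Q$ vanishing on $f_D(S)$, unique up to scalar (two independent quadrics would cut out a $2$-dimensional scheme of degree $4$, while $\deg f_D(S) = 6$; they cannot share a common linear factor without forcing $S$ into a hyperplane and reducing $D^2$ to $4$). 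The second inequality then gives at least one cubic $K$ outside the $5$-dimensional subspace $Q \cdot H^0(\mathbb{P}^4, \mathcal{O}(1))$, and since $Q \cap K$ is Cohen-Macaulay of pure dimension $2$ and degree $6 = \deg f_D(S)$, we conclude $f_D(S) = Q \cap K$.

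Next I would diagonalize the generator $\sigma$ of $G \cong Z_6$. By Theorem \ref{8} (specifically the shape of $\beta_1$), after choosing coordinates so that the Galois line is $W = \{X_0 = X_1 = X_2 = 0\}$ and $\sigma$ acts as the identity on $\mathcal{L} = \langle X_0, X_1, X_2 \rangle$, and after diagonalizing the $2$-block on $\langle X_3, X_4 \rangle$, I obtain $\sigma = [1, 1, 1, \alpha, \beta]$ with $\alpha, \beta$ roots of unity and $\mathrm{lcm}(\mathrm{ord}(\alpha), \mathrm{ord}(\beta)) = 6$. From the case analysis in the proof of Theorem \ref{51}, the ramification divisor has two components $C_1, C_2$ of class $D$, stabilized by the order-$3$ subgroup $G_1 = \langle \sigma^2 \rangle$ and the order-$2$ subgroup $G_2 = \langle \sigma^3 \rangle$ respectively. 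Each $C_i$, being a hyperplane section, must lie in the pointwise fixed locus of its stabilizer, so both $\sigma^2$ and $\sigma^3$ must have a hyperplane inside their fixed loci in $\mathbb{P}^4$ — that is, $\alpha^2 = 1$ or $\beta^2 = 1$, and separately $\alpha^3 = 1$ or $\beta^3 = 1$. Ruling out the degenerate cases in which $\sigma^2$ and $\sigma^3$ would fix the same hyperplane (collapsing $C_1$ and $C_2$) or in which a power of $\sigma$ would fix only $\mathcal{L} \cup W$ (containing no curve of $S$), I deduce, after possibly swapping $X_3, X_4$ and replacing $\sigma$ by $\sigma^{-1}$, that $\alpha = -1$ and $\beta = e_3$.

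With $\sigma = [1, 1, 1, -1, e_3]$ explicit, both $Q$ and $K$ are necessarily $\sigma$-eigenvectors. A character-by-character inspection of the $15$-dimensional space of quadratic monomials shows every nontrivial $\sigma$-eigenspace consists of quadrics divisible by $X_3$ or $X_4$; these would make $Q$ reducible and force $S$ to contain a hyperplane component. Hence the character of $Q$ is trivial and $Q = F_2(X_0, X_1, X_2) + cX_3^2$, normalizable to $c = 1$ by rescaling $X_3$. Applying the analogous analysis to the $1$-dimensional quotient $H^0(\mathcal{I}_S(3))/(Q \cdot H^0(\mathbb{P}^4, \mathcal{O}(1)))$ forces the character of $K$ also to be trivial; subtracting a suitable $L(X_0, X_1, X_2) \cdot Q$ to clear the $L \cdot X_3^2$ contributions and rescaling $X_4$ yields $K = F_3(X_0, X_1, X_2) + X_4^3$ (the $X_4^3$ coefficient cannot vanish, since otherwise $K$ would be a cone from $W$ and $Q \cap K$ would decompose into planes meeting along $W$). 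Finally, a direct calculation with these explicit equations identifies the branch locus of $\pi$ as $\{F_2 = 0\} \cup \{F_3 = 0\}$, so the smoothness of the plane curves $F_2 = 0$ and $F_3 = 0$ is exactly Lemma \ref{44}. The hardest step is the eigenvalue determination in the middle paragraph: the case analysis branches on the finite list of $(\alpha, \beta)$ giving $\sigma$ of order $6$ in $PGL$, and one must carefully verify in each case that $\sigma^2$ and $\sigma^3$ fix two distinct hyperplanes supporting ramification curves of class $D$.
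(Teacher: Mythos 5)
Your proposal is essentially correct, but it follows a genuinely different route from the paper. The paper argues bottom-up from the quotient structure: using $d_1=n_1=3$, $d_2=n_2=2$ from the proof of Theorem \ref{51}, it forms the two quotient covers $S/\langle\sigma^2\rangle\to\mathbb P^2$ (degree $2$, branched along the conic) and $S/\langle\sigma^3\rangle\to\mathbb P^2$ (degree $3$, branched along the cubic), notes that the normal crossings of the two branch curves make the fiber product smooth, identifies $S$ with that fiber product, and reads off the equations $X_3^2+F_2=0$, $X_4^3+F_3=0$ from the standard equations of cyclic covers; the $(2,3)$-complete-intersection statement itself is simply asserted from $D^2=6$. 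You instead work top-down in $\mathbb P^4$: the Riemann--Roch counts $h^0(2D)=14<15$, $h^0(3D)=29<35$ give the unique quadric and an extra cubic, hence $f_D(S)=Q\cap K$ (so you actually supply a proof of the complete-intersection claim that the paper leaves implicit), then you diagonalize $\sigma$ via Theorem \ref{8}, pin down the eigenvalues $[1,1,1,-1,e_3]$ from the two pointwise-fixed ramification curves of class $D$, and finish by a character analysis of the spaces of quadrics and cubics, quoting Lemma \ref{44} for smoothness of $F_2=0$ and $F_3=0$. Your route buys an explicit, self-contained derivation of the defining ideal together with the linearized group action; the paper's route buys the geometric tower of quotient surfaces, which is reused almost verbatim for the $Z_2^3$ case in Theorem \ref{11}.

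Two small points in your write-up need repair, though both are easily fixed by the same observation. First, you normalize $Q=F_2+cX_3^2$ to $c=1$ without excluding $c=0$; if $c=0$ then $Q$ is a cone over the conic with vertex the line $W$, and then the projection from $W$ would map $S$ into that conic, contradicting surjectivity of $\pi:S\to\mathbb P^2$. Second, your stated reason that the $X_4^3$-coefficient of $K$ cannot vanish (``$Q\cap K$ would decompose into planes meeting along $W$'') is not accurate: for $K=F_3(X_0,X_1,X_2)$ the intersection $Q\cap K$ is a conic bundle over the plane cubic, not a union of planes. The correct argument is again that such a $K$ is a cone with vertex $W$, so $\pi(S)$ would lie in the cubic curve (alternatively, $Q\cap K$ would be singular at $(0:0:0:0:1)$, contradicting $S$ smooth). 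You also implicitly use that each $C_i$ spans its hyperplane before concluding that $\sigma^2$, $\sigma^3$ fix hyperplanes pointwise; this is true (a smooth curve with $C_i\sim D$, $C_i^2=6$, $g=4$ cannot be a plane curve) but deserves a sentence.
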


\begin{proof}
Since the embedding is given by $|D|$, where $D^2=6$, the surface $f_D(S)$ is the smooth complete intersection. 
In the proof of Theorem \ref{51}, in case $|G|=6$, we have shown that $d_1=n_1=3$ and $d_2=n_2=2$. 
So that $\sigma^2$ (resp. $\sigma^3$) is identity on $C_1$ (resp. $C_2$). 
We have two covering maps $f_i:S_i:=S/\langle {\sigma}^i \rangle \longrightarrow \mathbb P^2$, where $i=2$ and $3$. 
The $f_i$ is a Galois covering of degree $i$ branched along $\Delta_i$. 
Put $g_i:S \longrightarrow S_i$. Then we have $f_2g_2=f_3g_3$. 
Since $\Delta_2$ and $\Delta_3$ have normal crossings, the fiber product $S_2 \times_{\mathbb P^2} S_3$ is smooth. 
Since $S$ is also given by the double covering of $S_3$ branched along $g_3(C_2)$, we see that 
$S$ is isomorphic to the fiber product $S_2 \times_{\mathbb P^2} S_3$. 
Furthermore, by taking a suitable coordinates on $\mathbb P^2$, we can assume 
$S_2$ is defined by $X_3^2+F_2(X_0,X_1,X_2)=0$ and $S_3$ by $X_4^3+F_3(X_0,X_1,X_2)=0$. 
This proves the theorem. 
\end{proof}

There is some relation between a Galois embedding and the trivilality of the symplectic group $G_s$.  
From the following Corollary \ref{110} to Corollary \ref{26} 

\noindent \underline{we do not assume that $G$ 
is abelian}. 

\begin{corollary}\label{110}
Suppose $S$ has a Galois embedding. Then, $G_s$ is trivial if and only if the embedding is given by a divisor $D$ such that $D^2=4$ or $6$. 
\end{corollary}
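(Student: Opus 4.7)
The plan is to reduce the corollary to the identity $G_s = \ker\varepsilon$ from the exact sequence (6), so that $G_s$ is trivial exactly when $G$ embeds faithfully into $\mathbb{C}^{\times}$, i.e., when $G$ is cyclic. Combined with the classification in Theorem \ref{51} and the explicit models in Theorems \ref{52} and \ref{53}, both implications then follow.

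For the forward direction, if $G_s = 1$ then $\varepsilon$ embeds $G$ as a finite subgroup of $\mathbb{C}^{\times}$, hence $G$ is cyclic and in particular abelian. Theorem \ref{51} then leaves only the possibilities $G \cong Z_4$ or $Z_6$, since $Z_2^3$ is not cyclic, yielding $D^2 = |G| \in \{4,6\}$.

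For the converse, first suppose $D^2 = 4$. Then $g(C) = 3$ and $\dim\mathrm{H}^0(S,\mathcal{O}(D)) = 4$, so $f_D(S) \subset \mathbb{P}^3$ is a quartic hypersurface and Corollary \ref{a4} forces $G$ to be cyclic of order $4$, i.e.\ $G = Z_4$. I would then invoke Theorem \ref{52} to fix coordinates with $S:\ X_3^4 + F_4(X_0,X_1,X_2) = 0$ and a generator $\sigma$ acting as $[1,1,1,e_4]$, and represent $\omega_S$ as the Poincar\'e residue of $\Omega/F$, where $\Omega = \sum_{i=0}^{3} (-1)^i X_i\, dX_0 \wedge \cdots \widehat{dX_i} \cdots \wedge dX_3$. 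Since $\sigma^{\ast}\Omega = e_4\,\Omega$ while $F$ is $\sigma$-invariant, this yields $\sigma^{\ast}\omega_S = e_4\,\omega_S$, so $\varepsilon(\sigma)$ has order $4$, $\varepsilon$ is faithful, and $G_s = 1$.

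The case $D^2 = 6$ proceeds in parallel: the classification identifies $G$ with $Z_6$ (the alternative $G \cong S_3$ is immediately incompatible with $G_s = 1$ because one would have $G_s \supseteq [G,G] = Z_3$, and it is also excluded by the non-abelian analysis surrounding the corollary). In the coordinates of Theorem \ref{53} with $S:\ F_2 + X_3^2 = F_3 + X_4^3 = 0$, the generator of $Z_6$ acts as $[1,1,1,-1,e_3]$, and iterating the Poincar\'e residue gives $\sigma^{\ast}\omega_S = -e_3\,\omega_S$, a primitive sixth root of unity, so $G_s = 1$ again. The main obstacle is therefore not the individual residue computations but securing cyclicity of $G$ in the non-hypersurface case $D^2 = 6$; once that is in hand, the eigenvalue calculations are routine given the explicit coordinate descriptions.
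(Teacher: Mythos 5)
Your overall route is the same as the paper's: the forward direction is word-for-word the paper's argument ($G_s=1$ forces $G$ to embed in $\mathbb C^{\times}$, hence cyclic, and Theorem \ref{51} leaves only $Z_4$ and $Z_6$), and the converse likewise passes through the explicit equations of Theorems \ref{52} and \ref{53} to show $\varepsilon$ is faithful. The one genuine difference is that where the paper cites \cite[Lemma 2.1]{mu} to conclude $|\Gamma_m|=4$ or $6$, you compute $\varepsilon(\sigma)$ directly as the eigenvalue on the Poincar\'e residue of $\Omega/F$ (resp.\ $\Omega/(F_2+X_3^2)(F_3+X_4^3)$), getting $e_4$ and $-e_3$. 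Those computations are correct and make the converse self-contained, which is a small but real improvement over an external citation.

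The weak point is exactly the one you flag: since the corollary is stated without assuming $G$ abelian, the hypothesis $D^2=6$ gives only $|G|=6$, so you must rule out $G\cong S_3$. Your two proposed dismissals do not accomplish this. The observation $G_s\supseteq[G,G]\cong Z_3$ shows only that \emph{if} $S_3$ occurred the conclusion would fail -- it cuts in the wrong direction and excludes nothing. And there is no ``non-abelian analysis surrounding the corollary'' in the paper to appeal to; on the contrary, the final Remark asserts that for $g=4$ a non-abelian (dihedral, i.e.\ $S_3$) Galois group does arise, deferring the analysis to a later paper. So this step remains open in your write-up. To be fair, the paper's own proof has precisely the same lacuna: it silently rewrites the hypothesis ``$D^2=4$ or $6$'' as ``$G\cong Z_4$ or $Z_6$'' in the converse direction, which is justified for $D^2=4$ by Corollary \ref{a4} (as you correctly note) but is simply asserted for $D^2=6$. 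You have therefore reproduced the paper's argument faithfully, improved one citation into a computation, and correctly located -- without closing -- the one gap that the paper itself leaves.
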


\begin{proof}
If $G_s$ is trivial, then $G$ is cyclic, hence $G \cong Z_4$ or $Z_6$. Conversely, if $G \cong Z_4$ or $Z_6$, then the defining 
ideal of $f_D(S)$ and the generator $\sigma$ are given in Theorems \ref{52} or \ref{53}.  
Referring to \cite[Lemma 2.1]{mu}, we conclude $|\Gamma_m|=4$ or $6$, where $\Gamma_m$ is the cyclic group in (6). 
Thus $G_s$ is trivial. 
\end{proof}

We consider the Picard number $\rho(S)$ for the surfaces $S$ in Theorem \ref{52} and \ref{53}. 

\begin{lemma}\label{23}
If $S$ is the surface in Theorem \ref{53}, then $\rho(S) \geq 2$. 
\end{lemma}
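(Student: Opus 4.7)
The plan is to exhibit an intermediate Galois quotient of $S$ whose Picard number is already at least $2$, and then to pull its N\'eron--Severi classes back to $S$.

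By Theorem \ref{53}, the subcover $S_2 := S/\langle \sigma^2 \rangle$ is the double cover of $\mathbb{P}^2$ branched along the smooth conic $\Delta_2 = \{F_2=0\}$, and in fact it is cut out in $\mathbb{P}^3$ by the single quadratic equation $X_3^2+F_2(X_0,X_1,X_2)=0$. Since $\Delta_2$ is smooth the ternary form $F_2$ is nondegenerate, so $S_2$ is a smooth quadric hypersurface in $\mathbb{P}^3$. Consequently $S_2 \cong \mathbb{P}^1 \times \mathbb{P}^1$ and $\rho(S_2)=2$.

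Next I would use the quotient morphism $p_2 : S \to S_2$, which is finite and surjective of degree $3$ between smooth projective surfaces, to transport divisor classes. Letting $R_1, R_2$ denote the two rulings of $\mathbb{P}^1 \times \mathbb{P}^1 \cong S_2$, the projection formula shows that the intersection matrix of the pulled-back classes $p_2^*R_1, p_2^*R_2$ on $S$ is
\[
\deg(p_2)\cdot \begin{pmatrix} 0 & 1 \\ 1 & 0 \end{pmatrix} = \begin{pmatrix} 0 & 3 \\ 3 & 0 \end{pmatrix},
\]
which is nondegenerate. Hence $p_2^*R_1$ and $p_2^*R_2$ are linearly independent in $\mathrm{NS}(S) \otimes \mathbb{Q}$, giving $\rho(S) \geq 2$.

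There is essentially no obstacle. The argument hinges on the very explicit defining equations provided by Theorem \ref{53} together with the classical identification of smooth quadrics in $\mathbb{P}^3$ with $\mathbb{P}^1 \times \mathbb{P}^1$. The only step meriting care is the smoothness of the hypersurface $\{X_3^2+F_2=0\}\subset \mathbb{P}^3$, but this reduces to the nondegeneracy of $F_2$ as a ternary quadratic form, which is guaranteed by the smoothness of $\Delta_2$ recorded in Theorem \ref{53}.
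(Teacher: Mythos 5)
Your proof is correct, but it takes a genuinely different route from the paper. The paper works with the \emph{other} intermediate quotient, $S_3=S/\langle\sigma^3\rangle$, which is the smooth cubic surface $X_4^3+F_3=0$ in $\mathbb{P}^3$; it takes a line (a $(-1)$-curve) on that cubic and asserts that its preimage under the double cover $S\to S_3$ yields a $(-2)$-curve on $S$, which together with the ample hyperplane class forces $\rho(S)\geq 2$. You instead use $S_2=S/\langle\sigma^2\rangle$, identify it as the smooth quadric $X_3^2+F_2=0$, i.e.\ $\mathbb{P}^1\times\mathbb{P}^1$, and pull back the two rulings along the degree-$3$ map $p_2$; the projection formula gives the nondegenerate Gram matrix $\left(\begin{smallmatrix}0&3\\3&0\end{smallmatrix}\right)$, hence two independent classes in $\mathrm{NS}(S)\otimes\mathbb{Q}$. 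Both arguments rest on the explicit equations of Theorem \ref{53}, but yours is arguably cleaner: it sidesteps the unstated computation behind the paper's ``we infer readily'' (one must actually check that the preimage of a line on the cubic contains a $(-2)$-curve, which depends on how the line meets the branch locus of $S\to S_3$), replacing it with a purely formal intersection-theoretic independence argument. The only facts you rely on --- smoothness of the quadric from nondegeneracy of $F_2$, the Segre identification, and the projection formula for finite surjective morphisms of smooth surfaces --- are all standard and correctly applied.
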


\begin{proof}
Let $\sigma$ be a generator of $G$ and consider $S/\langle {\sigma}^3 
\rangle$, which is a rational surface containing $(-1)$-curve. In fact, it is a smooth cubic in ${\mathbb P}^3$. Then we 
infer readily that $S$ has a $(-2)$-curve. 
\end{proof} 

Let $T_X$ be the transcendental lattice for a $K3$ surface $X$. 
Machida and Oguiso \cite{mo} prove the following: 

\begin{lemma}\label{24}
Let $X$ be a $K3$ surface and $G$ be a finite automorphism group of $X$. 
Assume that {\rm rank} $T_X \geq 14$. Then $G_s=\{ 1 \}$, or equivalently, $G \cong \Gamma_m$. 
\end{lemma}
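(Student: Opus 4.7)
The plan is to exploit the basic fact that a symplectic automorphism of a $K3$ surface acts trivially on the transcendental lattice, and then to combine this with the rank estimates for the coinvariant lattice of a finite symplectic action, due to Nikulin. First I would recall that by definition $G_s$ acts trivially on $H^{2,0}(X) = \mathbb{C}\omega_X$. Since $T_X \otimes \mathbb{C}$ contains $H^{2,0}(X)$ as a $G_s$-invariant one-dimensional subspace, and since $T_X$ is $G_s$-equivariantly irreducible up to the cyclotomic factors determined by the eigenvalues of $\varepsilon$ (which here are all $1$ because $G_s \subseteq \ker \varepsilon$), the action of $G_s$ on $T_X$ must be trivial. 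In particular $T_X$ is contained in the invariant sublattice $H^2(X,\mathbb{Z})^{G_s}$.

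Next I would pass to the coinvariant lattice
\[
\Omega_{G_s} := \bigl(H^2(X,\mathbb{Z})^{G_s}\bigr)^{\perp} \subset H^2(X,\mathbb{Z}).
\]
By the previous step $\Omega_{G_s}$ is orthogonal to $T_X$, hence it is a primitive sublattice of $NS(X)$. Moreover, since an ample class of $X$ can be averaged over $G_s$ to produce a $G_s$-invariant ample class, the invariant sublattice has signature $(1,*)$ and $\Omega_{G_s}$ is negative definite. This gives the key inequality
\[
\mathrm{rank}\, NS(X) \;\geq\; \mathrm{rank}\, \Omega_{G_s} + 1.
\]

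The heart of the argument is then to produce a lower bound on $\mathrm{rank}\, \Omega_{G_s}$ when $G_s \neq \{1\}$. For this I would reduce to the case of a subgroup of prime order: if $G_s \neq \{1\}$, pick $\sigma \in G_s$ of prime order $p$. Nikulin's analysis of finite symplectic actions (based on the holomorphic Lefschetz fixed-point formula applied to $\sigma$) forces $p \in \{2,3,5,7\}$ and, in each case, determines the rank of the coinvariant sublattice of $\langle \sigma \rangle$; in particular for $p=2$ (the worst case) one obtains $\mathrm{rank}\, \Omega_{\langle \sigma \rangle} = 8$. Since $\Omega_{\langle \sigma \rangle} \subseteq \Omega_{G_s}$, this yields $\mathrm{rank}\, \Omega_{G_s} \geq 8$, hence $\mathrm{rank}\, NS(X) \geq 9$, hence $\mathrm{rank}\, T_X \leq 13$, contradicting the hypothesis $\mathrm{rank}\, T_X \geq 14$. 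Therefore $G_s = \{1\}$, which by the exact sequence (6) is equivalent to $G \cong \Gamma_m$.

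The main obstacle, and the step that genuinely requires input beyond formal lattice theory, is the rank computation $\mathrm{rank}\, \Omega_{\langle\sigma\rangle} = 8$ for a symplectic involution. Without this (or the analogous Euler-characteristic/fixed-point count that Nikulin uses to show a symplectic involution has exactly eight isolated fixed points, hence produces an embedded Nikulin lattice of rank $8$ in $NS(X)$), one only gets the much weaker bound $\mathrm{rank}\, \Omega_{G_s} \geq 1$. Once this fixed-point calculation is invoked, the rest of the argument is a direct comparison of ranks on $H^2(X,\mathbb{Z})$.
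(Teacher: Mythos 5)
The paper does not actually prove this lemma; it is quoted verbatim from Machida--Oguiso \cite{mo}, so there is no internal proof to compare against. Your reconstruction is the standard (and correct) argument: $G_s$ acts trivially on $T_X$ because the invariant part of $T_X\otimes\mathbb{Q}$ is a rational sub-Hodge structure containing $H^{2,0}$ and $T_X\otimes\mathbb{Q}$ is irreducible; hence the coinvariant lattice $\Omega_{G_s}$ lies in $NS(X)$, is orthogonal to an averaged ample class, and is therefore negative definite of rank at most $\rho(X)-1$; and Nikulin's fixed-point/Lefschetz computation gives $\mathrm{rank}\,\Omega_{\langle\sigma\rangle}\ge 8$ for any symplectic $\sigma$ of prime order (with equality for involutions), forcing $\rho(X)\ge 9$, i.e.\ $\mathrm{rank}\,T_X\le 13$, whenever $G_s\ne\{1\}$. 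This matches the sharp threshold $14$ in the statement. Two small blemishes, neither fatal: the invariant sublattice has signature $(3,\ast)$, not $(1,\ast)$, since it also contains the positive $2$-plane of $T_X\otimes\mathbb{R}$ (your actual deduction of negative definiteness via the Hodge index theorem inside $NS(X)$ is fine); and the phrase about ``cyclotomic factors determined by the eigenvalues of $\varepsilon$'' should simply be the irreducibility of $T_X\otimes\mathbb{Q}$ as a Hodge structure for projective $X$.
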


As we expect, a ``general $K3$ surface" does not have a Galois embedding. Indeed, combining the results above, 
we deduce the following assertion. 

\begin{theorem}\label{25}
If $\rho(S)=1 $, then $S$ has no Galois embeddings.  
\end{theorem}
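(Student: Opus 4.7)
The plan is to reduce to the already-classified abelian situation via the Machida--Oguiso lemma, and then invoke the Picard-number lower bounds in Lemmas \ref{54} and \ref{23} to obtain a contradiction.

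First I would argue by contradiction: suppose $S$ admits a Galois embedding with group $G$. Because $S$ is a $K3$ surface, the second cohomology has rank $22$, so the transcendental lattice satisfies $\mathrm{rank}\, T_S = 22 - \rho(S)$. The hypothesis $\rho(S)=1$ therefore gives $\mathrm{rank}\, T_S = 21 \geq 14$. Applying Lemma \ref{24} (Machida--Oguiso) to the finite automorphism group $G \subset \mathrm{Aut}(S)$ coming from Representation \ref{a1}, I conclude that the symplectic part $G_s$ is trivial, so $G \cong \Gamma_m$ is cyclic and in particular abelian.

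Next I would invoke Theorem \ref{51}, which classifies the possible abelian Galois groups as $Z_4$, $Z_6$, or $Z_2^3$. Since $Z_2^3$ is not cyclic, our $G$ must be $Z_4$ or $Z_6$, and correspondingly $S$ is isomorphic to either $S_{(4)}$ (as described in Theorem \ref{52}) or $S_{(23)}$ (as described in Theorem \ref{53}). In the first case, Lemma \ref{54} gives $\rho(S) \geq 2$; in the second case, Lemma \ref{23} gives $\rho(S) \geq 2$. Either way this contradicts $\rho(S)=1$, completing the proof.

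I do not anticipate any serious obstacle here; the proof is essentially a bookkeeping combination of the previously established results. The only point requiring mild care is verifying that Lemma \ref{24} may be applied even though $G$ is not assumed abelian at the outset: this is precisely the reason for the parenthetical reminder just before Corollary \ref{110} that the abelian hypothesis is dropped, so the cyclicity conclusion of Machida--Oguiso is what reduces the general case back to the hypotheses of Theorem \ref{51}.
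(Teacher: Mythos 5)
Your proposal is correct and follows exactly the chain the paper intends by ``combining the results above'': $\rho(S)=1$ forces $\mathrm{rank}\,T_S=21\geq 14$, so Lemma \ref{24} makes $G$ cyclic, Theorem \ref{51} then leaves only $Z_4$ or $Z_6$ (hence $S\cong S_{(4)}$ or $S_{(23)}$), and Lemmas \ref{54} and \ref{23} give $\rho(S)\geq 2$, a contradiction. This matches the paper's (implicit) argument step for step.
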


\begin{corollary}\label{26}
If $S$ has a Galois embedding and $\rho(S) \le 8$, then it is isomorphic to $S_{(4)}$ or $S_{(23)}$. 
Hence $G \cong Z_4$ or $Z_6$. 
\end{corollary}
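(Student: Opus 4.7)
The plan is to combine Lemma \ref{24} with Corollary \ref{110}, using the standard relation between the Picard number and the rank of the transcendental lattice on a $K3$ surface.

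First, I would recall that for any $K3$ surface $S$ one has $H^2(S,\mathbb{Z})$ of rank $22$, with an orthogonal (up to finite index) decomposition into the Néron--Severi lattice and the transcendental lattice, so
\[
\rho(S) + \mathrm{rank}\, T_S = 22.
\]
The hypothesis $\rho(S) \le 8$ then yields $\mathrm{rank}\, T_S \geq 14$.

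Next, assume $(S,D)$ defines a Galois embedding with Galois group $G$. By Representation \ref{a1}, $G$ is a finite subgroup of $\mathrm{Aut}(S)$, so Lemma \ref{24} applies to it and gives $G_s = \{1\}$, i.e. $G \cong \Gamma_m$ is cyclic and acts purely non-symplectically. By Corollary \ref{110}, the existence of a Galois embedding with trivial symplectic part forces $D^2 = 4$ or $D^2 = 6$. From the classification in Theorem \ref{51} (together with Theorems \ref{52} and \ref{53}, which describe the defining equations explicitly), these two possibilities correspond exactly to $S \cong S_{(4)}$ with $G \cong Z_4$ and $S \cong S_{(23)}$ with $G \cong Z_6$; the remaining case $S_{(222)}$ of Theorem \ref{51} is excluded because it has $G \cong Z_2^3$, which is non-cyclic and therefore has nontrivial symplectic part.

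There is really no serious obstacle here: the corollary is essentially a direct book-keeping of the results already established. The only point that requires a line of care is verifying the hypothesis of Lemma \ref{24}, namely that the Galois group $G$ of the embedding is indeed a finite automorphism group of $S$ in the sense used by Machida--Oguiso, but this is immediate from Representation \ref{a1}. The rest is a mechanical chain $\rho(S)\le 8 \Rightarrow \mathrm{rank}\, T_S \ge 14 \Rightarrow G_s=\{1\} \Rightarrow D^2 \in \{4,6\} \Rightarrow S \in \{S_{(4)},S_{(23)}\}$.
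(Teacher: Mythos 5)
Your proposal is correct and is exactly the argument the paper intends: the paper gives no explicit proof of this corollary, saying only that it follows by "combining the results above," and your chain $\rho(S)\le 8 \Rightarrow \mathrm{rank}\,T_S\ge 14 \Rightarrow G_s=\{1\}$ (Lemma \ref{24}) $\Rightarrow D^2\in\{4,6\}$ (Corollary \ref{110}) $\Rightarrow S\cong S_{(4)}$ or $S_{(23)}$ with $G\cong Z_4$ or $Z_6$ (Theorems \ref{52}, \ref{53}) is precisely that combination. Your remark that one must check $G$ is a finite automorphism group via the representation $\alpha$ before applying Machida--Oguiso is a worthwhile point of care that the paper leaves implicit.
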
 

Then, what can we say about a Galois embedding when $\rho(S)$ is large ? Can we say that $S$ has the Galois embedding in 
the case where $\rho(S)$ is the maximal possible $20$ ? 

\medskip

\underline{CASE 3. $g=5$}

In this case $|G|=8$. So $G \cong Z_2^3$.  

\begin{theorem}\label{11}
If $G \cong Z_2^3$, then $S$ is a double covering of $S_{(22)}$,
 where $S_{(22)}$ is a rational surface of $(2,2)$-complete intersection in ${\mathbb P}^4$. 
Furthermore we have the following sequence of surfaces: 
$$
S \stackrel{\pi_1}\longrightarrow S_{(22)} \stackrel{\pi_2}\longrightarrow S_{(2)} \stackrel{\pi_3}{\longrightarrow} {\mathbb P}^2, \eqno(6)
$$
which have the following properties.
\begin{enumerate}
\item $\pi_i$ ($i=1,2,3$) is a double covering and $\pi=\pi_3 \cdot \pi_2 \cdot \pi_1$. 
\item $S_{(22)}$ is a surface of $(2,2)$-complete intersection of ${\mathbb P}^4$. 
\item $S_{(2)}$ is a smooth conic in ${\mathbb P}^3$.
\end{enumerate} 
Further more, the defining equation of $f_D(S)$ can be given by 
$X_3^2+F_{23}(X_0, X_1, X_2)=X_4^2 +F_{24}(X_0, X_1, X_2)=X_5^2 +F_{25}(X_0, X_1, X_2)=0$, where $F_{2i}(X_0, X_1, X_2)$ is a form of $X_0, X_1, X_2$ with degree $2$, such that each curve $F_{2i}=0$ $(i=3,4,5)$ in $\mathbb P^2$ has no singular points. 
In particular $S$ is isomorphic to $S_{(222)}$. 
\end{theorem}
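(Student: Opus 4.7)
The plan is to realize $S\subset\mathbb{P}^{5}$ explicitly via $|D|$, use Theorem \ref{8} together with the fact that $G$ is an abelian group of exponent $2$ to diagonalize the action on $H^{0}(S,\mathcal{O}(D))$, then read off the three quadratic generators of the ideal of $S$ and the intermediate quotients from the coordinate structure.

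\textbf{Setup.} By Theorem \ref{51}, $|G|=D^{2}=8$, and Riemann--Roch together with Kodaira vanishing give $h^{0}(S,\mathcal{O}(D))=6$, embedding $S$ in $\mathbb{P}^{5}$. Theorem \ref{8} supplies a $G$-invariant $3$-dimensional subspace $\mathcal{L}\subset H^{0}(S,\mathcal{O}(D))$ on which each $\sigma$ acts trivially after normalization; a $G$-stable complement decomposes under $G\cong Z_{2}^{3}$ into three one-dimensional characters $\chi_{3},\chi_{4},\chi_{5}\in\widehat{G}$. Faithfulness of the representation $\gamma$ in Representation \ref{19} forces these three characters to be distinct and to span $\widehat{G}$, so after relabeling the generators $\sigma_{1},\sigma_{2},\sigma_{3}$ we may pick a basis $X_{0},\ldots,X_{5}$ of $H^{0}(S,\mathcal{O}(D))$ such that $\sigma_{i}$ acts on $X_{i+2}$ by $-1$ and fixes the other five coordinates, for $i=1,2,3$.

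\textbf{Extracting the three quadrics.} By projective normality of K3 surfaces (Saint-Donat) and Riemann--Roch ($h^{0}(S,\mathcal{O}(2D))=18$), the ideal $I$ of $S\subset\mathbb{P}^{5}$ is cut out by a $G$-stable $3$-dimensional space of quadrics. Of the $21$ degree-two monomials, the $G$-invariants are exactly the six monomials in $X_{0},X_{1},X_{2}$ together with $X_{3}^{2},X_{4}^{2},X_{5}^{2}$; the cross terms $X_{i}X_{j}$, $3\le i<j\le 5$, transform by nontrivial characters. The $G$-invariant quadrics form a $9$-dimensional space mapping to $H^{0}(S,\mathcal{O}(2D))^{G}=\pi^{*}H^{0}(\mathbb{P}^{2},\mathcal{O}(2))$, which is $6$-dimensional and into which the six quadrics in $X_{0},X_{1},X_{2}$ inject since $\pi$ is finite. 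Consequently $I$ lies entirely in the trivial isotypic component, meets the $6$-dimensional subspace of base quadrics only in zero, and therefore projects isomorphically onto the quotient spanned by $X_{3}^{2},X_{4}^{2},X_{5}^{2}$. A suitable basis of $I$ thus takes the form
\[
q_{i}=X_{i+2}^{2}+F_{2,i+2}(X_{0},X_{1},X_{2}),\qquad i=1,2,3,
\]
which identifies $S\cong S_{(222)}$. The fixed locus of $\sigma_{i}$ on $S$ is $C_{i}=\{X_{i+2}=0\}\cap S$, whose image in $\mathbb{P}^{2}$ is the conic $\Delta_{i}=\{F_{2,i+2}=0\}$; since each $\Delta_{i}$ is smooth by Lemma \ref{44}, each $F_{2,i+2}=0$ is nonsingular as asserted.

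\textbf{The chain of double covers.} The rational map from $\mathbb{P}^{5}$ through the $\sigma_{1}$-eigenpoint $(0:0:0:1:0:0)$ to $\mathbb{P}^{4}$ is $\sigma_{1}$-invariant, and restricted to $S$ it is a finite degree-two morphism; eliminating $X_{3}$ from $q_{1},q_{2},q_{3}$ shows its image is $S_{(22)}=\{X_{4}^{2}+F_{24}=X_{5}^{2}+F_{25}=0\}\subset\mathbb{P}^{4}$, a smooth $(2,2)$-complete intersection by the Jacobian criterion applied using the smoothness of the $F_{2i}$. This realizes $\pi_{1}:S\to S/\langle\sigma_{1}\rangle=S_{(22)}$; rationality follows from Lemma \ref{43}. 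Iterating, $\pi_{2}$ is projection from $(0:0:0:1:0)$, giving $S_{(2)}=\{X_{5}^{2}+F_{25}=0\}\subset\mathbb{P}^{3}$, a smooth quadric surface, and $\pi_{3}:S_{(2)}\to\mathbb{P}^{2}$ is the double cover branched along $F_{25}=0$. Each $\pi_{i}$ is a double covering by construction, $\pi=\pi_{3}\circ\pi_{2}\circ\pi_{1}$ has degree $8=|G|$, and the assertion of Theorem \ref{11} is complete. The main obstacle is the character-theoretic argument in the middle step: one must verify that the three nontrivial characters on $H^{0}(S,\mathcal{O}(D))/\mathcal{L}$ are \emph{distinct} (else $\gamma$ is not faithful) so that cross quadrics like $X_{3}X_{4}$ cannot appear in $I$, and that $\pi^{*}H^{0}(\mathbb{P}^{2},\mathcal{O}(2))$ fills the entire invariant part of $H^{0}(S,\mathcal{O}(2D))$; once these are in place, the diagonal form of the defining equations and the iterated projection structure fall out together.
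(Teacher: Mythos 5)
Your proposal is essentially correct in outline, but it takes a genuinely different route from the paper. The paper's proof is modeled on that of Theorem \ref{53}: from the classification in Theorem \ref{51} one has $r=3$ and $d_i=n_i=2$, so $\pi$ is branched along three smooth conics with normal crossings; the paper then forms the quotients $S_i=S/\langle\sigma_i\rangle$ and $S_{ij}=S/\langle\sigma_i,\sigma_j\rangle$ and identifies $S$ with the iterated fiber product $(S_{12}\times_{S_1}S_{13})\times_{\mathbb{P}^2}S_{23}$ of double covers of $\mathbb{P}^2$, from which the tower of double coverings and the diagonal equations follow. You instead work on the linear system side: decompose $H^0(S,\mathcal{O}(D))$ into characters of $Z_2^3$, produce eigen-coordinates $X_3,X_4,X_5$, and extract the quadrics $q_i=X_{i+2}^2+F_{2,i+2}$ from the invariant part of $H^0(S,\mathcal{O}(2D))$. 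Your route yields the explicit equations directly from the representation on $H^0(S,\mathcal{O}(D))$, at the price of invoking projective normality; the paper's fiber-product argument uses only the branch data already established in Section 4.

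Two steps need repair. First, you justify the distinctness and independence of the characters $\chi_3,\chi_4,\chi_5$ by ``faithfulness of $\gamma$'' (Representation \ref{19}); but $\gamma$ is not faithful in general --- Theorem \ref{a3} only says $\ker\gamma$ is cyclic, and in this very case $\sigma_1\sigma_2\sigma_3$ acts on $W$ by the scalar $-1$, so $\ker\gamma\cong Z_2$. The conclusion you need is still true, but should be deduced from the faithfulness of $\beta$ (Representation \ref{18}) or of the Galois action on the function field: if the three characters did not span the character group, some nontrivial $\sigma$ would act on all of $\mathbb{P}^5$ by a scalar. Second, having $q_1,q_2,q_3$ in the ideal does not by itself ``identify $S\cong S_{(222)}$'': for genus-$5$ $K3$ surfaces the quadrics through $S$ need not cut out $S$ (Saint-Donat's trigonal case), and your appeal to generation of the ideal by quadrics is exactly what is at stake. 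The clean finish is a degree argument: $Q=\{q_1=q_2=q_3=0\}$ is finite of degree $8$ over $\mathbb{P}^2$ (each $q_i$ determines $X_{i+2}$ up to sign), hence a two-dimensional Cohen--Macaulay scheme of degree $8$ containing the degree-$8$ surface $S$, so $Q=S$. (Incidentally, the three quadrics can be found without projective normality: $X_{i+2}^2|_S$ is a $G$-invariant section of $\mathcal{O}(2D)$, hence the pullback of a conic from $\mathbb{P}^2$.) Finally, smoothness of $S_{(22)}$ needs the transversality of the two conics $F_{24}=0$ and $F_{25}=0$ at their common points, i.e.\ the normal crossing property of Lemmas \ref{37} and \ref{44}, not just the smoothness of each $F_{2i}$ separately.
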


\begin{proof}
The proof is done by the same way as the one of Theorem \ref{53}. 
In this case we have $d_i=n_i=2$ $(i=1,2,3)$. Let $G=\langle \sigma_1, \sigma_2, \sigma_3 \rangle$ and $G_i=\langle \sigma_i \rangle$. 
 Put $S_i=S/\langle \sigma_i \rangle$ and $S_{ij}=S/\langle \sigma_i, \sigma_j \rangle$, where $i \ne j$. 
Then $S$ is a double covering of $S_i$ and so is $S_i$ of $S_{ij}$, and $S_{ij}$ is a double covering of $\mathbb P^2$ 
branched along $\Delta_k$, where $i,j,k$ are mutually distinct. 
It is easy to see that $S_i$ is isomorphic to the fiber product $S_{ij} \times_{\mathbb P^2} S_{ik}$ and hence 
$S$ is isomorphic to $(S_{12} \times_{S_1} S_{13}) \times_{\mathbb P^2} S_{23}$. 
In particular $S$ is isomorphic to $S_{(222)}$. 
\end{proof}

\begin{remark}
In the case where $G$ is not abelian and $g=4$ or $g=5$, we can show that $G$ is isomorphic to 
the dihedral group. Furthermore such a $K3$ is obtained as a Galois closure surface of some rational 
surface.  
The research for non-abelian case will be done in the forthcoming paper. 
\end{remark}

There are a lot of problems concerning our theme, we pick up some of them.

\bigskip

\noindent{\bf Problems.}

\begin{enumerate}
\item How many Galois subspaces do there exist for one Galois embedding and how is their arrangement?  
In the case of a smooth quartic surface in $\mathbb P^3$, see Remark \ref{113}. Then, how is the case for $(2,3)$-complete intersection or 
$(2,2,2)$-complete intersection?   
\item Does there exist a $K3$ surface $S$ on which there exist two divisors $D_i$ ($i=1, 2$) such that they  
give Galois embeddings and $D_1^2 \ne D_2^2$?  
\item Does each singular $K3$ surface have a Galois embedding ?
\end{enumerate}

\bigskip

\begin{center}
{\bf Acknowledgement}
\end{center}
The author expresses his gratitude to Hiroyasu Tsuchihashi, who gave him useful information.

\bibliographystyle{amsplain}

\end{document}